\newtheorem{proposition}{Proposition}
\newtheorem{Proposition}[proposition]{Proposition}
\newtheorem{theorem}[proposition]{Theorem}
\newtheorem{lemma}[proposition]{Lemma}
\newtheorem{corollary}[proposition]{Corollary}
\newtheorem{Theorem}[proposition]{Theorem}
\newtheorem{Lemma}[proposition]{Lemma}
\newtheorem{Corollary}[proposition]{Corollary}
\theoremstyle{definition}
\newtheorem{definition}[proposition]{Definition}
\newtheorem{Definition}[proposition]{Definition}
\theoremstyle{remark}
\newtheorem{example}[proposition]{Example}
\newtheorem{Example}[proposition]{Example}
\definecolor{upsPurp}{RGB}{99,0,60}
\definecolor{upsRed}{RGB}{198,11,70}
\definecolor{upsCyan}{RGB}{0,148,181}
\title{Oriented graphs on curve complex I: hyperbolic and extremal length}
\author{Dong Tan}
\address{Guangxi University, School of Mathematics and Information Science \& Guangxi Center for Mathematical Research \& Center
for Applied Mathematics of Guangxi, East Daxue Road, Xixiangtang District,
Nanning, Zip Code: 530004, P.R. China} 
\email{duzuizhe2013@foxmail.com}
\author{Wen Yang}
\address{Hunan University, School of Mathematics, Lushan Road (S), Yuelu District,
Changsha, Zip Code: 410082, P.R. China}
\email{yang-wen@139.com}
\thanks{The first author is supported by
National Natural Science Foundation of China (No: 12001122, 12271533, 12361014)
and the special foundation for Guangxi Ba Gui Scholars.
The second author is the corresponding author and is partially supported by
Fundamental Research Funds for the Central Universities (No: 531118010617)
and National Natural Science Foundation of China (No: 12471073).}
\subjclass[2020]{20F65, 30F60, 57M15, 05C63}
\keywords{curve complex, mapping class group, Teichm\"{u}ller space, infinite graph}
\date{}
\begin{document}

\begin{abstract}
    We investigate oriented graphs based on the curve complex $C(S)$ of a closed surface $S$ and induced by functions on the vertex set of $C(S)$. In particular, we introduce the Dehn quasi-homothetic functions, which behave similarly to homotheties under repeated Dehn twists. We prove that any two positive such functions of the same type induce different oriented graphs unless they are proportional. This leads to a new rigidity result for closed hyperbolic surfaces---distinct from the $9g-9$ theorem and length spectrum rigidity---knowing only for any two disjoint simple closed curves which one is longer (in terms of hyperbolic or extremal length) suffices to determine the hyperbolic metric on the surface. We also prove that each automorphism of the oriented graph induced by a function with sublevel sets finite is induced by a self-homeomorphism of $S$.
\end{abstract}

\maketitle

\section{Introduction}\label{sec:Introduction}

A goal of isospectral geometry is to recover the geometry of a Riemannian manifold from its Laplacian spectrum. This idea is metaphorically phrased by Mark Kac \cite{kac1966can} as: ``Can one hear the shape of a drum?'' For closed hyperbolic surfaces, the answer is affirmative: since the Laplacian spectrum and the length spectrum reciprocally determine each other \cite[Satz 7 and Satz 8]{huber1959analytischen}, and the length spectrum suffices to distinguish almost all closed hyperbolic surfaces \cite{wolpert1977eigenvalue}, it follows that the Laplacian spectrum also suffices to distinguish them. In fact, we don't need the full length spectrum; only a part of it, namely the simple length spectrum, is enough for this distinction \cite{baik2020simple}.

The simple length spectrum is commonly defined as the data formed by arranging the lengths of all simple closed curves on the surface in increasing order while recording their multiplicities. This process is akin to a musician extracting all the notes from a musical score and arranging them in increasing order, yet surprisingly, one can almost always reconstruct the original score from these essentially shuffled notes.

Nevertheless, most people lack the perceptual precision to perceive the simple length spectrum, thus failing to hear the shape of hyperbolic surfaces in this way. While most people can only discern relative differences between adjacent notes and thus reconstruct the score roughly, are such reconstructions sufficient to distinguish different music pieces? This inspires us to ask the analogous question in the context of curve complex. Instead of recording the lengths of curves on the curve complex, we now record the order relations between the lengths of adjacent vertices (i.e., disjoint simple closed curves) in the complex, forming what we call an oriented graph of the curve complex. 
We find that any hyperbolic metric on a closed surface is uniquely determined by the corresponding oriented graph on the curve complex. This is akin to reconstructing an exact musical score from a rough version obtained through relative pitch, which indicates it is easier to hear the shape of a closed hyperbolic surface.

In fact, we can say even more. After all, the ``musical score'' written on the curve complex need not originate from a hyperbolic metric on the surface. Let us begin with a closed surface $S$, and let $C(S)$ be its curve complex. Denote the vertex set of $C(S)$ by $V(S)$. Then as illustrated in \prettyref{fig:induce-oriented-graph}, every real-valued function $L:V(S)\rightarrow\mathbb{R}$ induces an oriented graph $G_{L}$, whose vertex set is $V(S)$, and for any $\alpha,\beta\in V(S)$, there is a directed edge from $\alpha$ to $\beta$ in $G_{L}$ if and only if $\alpha$ and $\beta$ are disjoint and $L(\alpha)<L(\beta)$. Here we regard $L$ as a ``musical score'' on the curve complex, while $G_{L}$ is its rough version. Obviously, for a generic $L$, one cannot recover $L$ from $G_{L}$. This is not surprising, since we only expect to reconstruct music---not all sounds---from a rough version of its score. We can reconstruct the music because the music itself has an inherent structure.

\begin{figure}[H]
	\noindent \begin{centering}
		\includegraphics{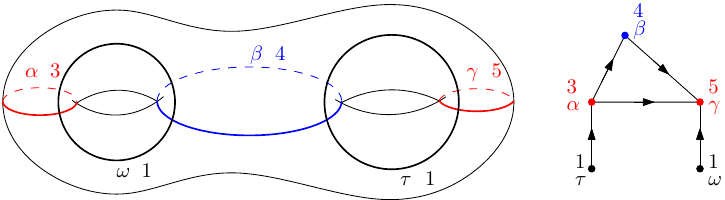}
		\par\end{centering}
	\caption{\label{fig:induce-oriented-graph}Here $L$ takes the values $3,1,4,1,5$ at the vertices $\alpha,\omega,\beta,\tau,\gamma$,
respectively.}
\end{figure}

Let us consider a class of functions that can be imagined as certain types of musical scores, with hyperbolic length functions as a special case:
\begin{Definition}\label{Definition:Dehn quasi-homothetic}
We will denote by $E^{c}(S)$ the set $\{(\alpha,\beta)\in V(S)\times V(S):$
there is no edge joining $\alpha$ to $\beta$ in $C(S)\}$. Let
$L$ be a real-valued function on $V(S)$. We say that $L$ is a \emph{Dehn
quasi-homothetic} function of type $(f,A)$, if there exist two functions
$f:\mathbb{N}\rightarrow\mathbb{R}_{+}$ and $A:E^{c}(S)\rightarrow\mathbb{R}_{+}$
satisfying the following three conditions:

(1) $f(n)\rightarrow+\infty$ as $n\rightarrow+\infty$;

(2) For any positive integer $N$, the set
\[
\{f(k)/f(j):j,k>N\}
\]
is dense in $\mathbb{R}_{+}$;

(3) For any $(\alpha,\beta)\in E^{c}(S)$ we have
\[
\lim_{n\rightarrow+\infty}\frac{L(T_{\alpha}^{n}\beta)}{f(n)}=A(\alpha,\beta)\cdot L(\alpha),
\]
where $T_\alpha$ is the Dehn twist along $\alpha$.
\end{Definition}

The first main result of this paper is that we can almost reconstruct
a Dehn quasi-homothetic and positive function $L$ from $G_{L}$ if
we know its type:
\begin{Theorem}
\label{Theorem:main1}Let $L_{1},L_{2}$ be non-negative real-valued
functions on $V(S)$. Suppose that they are Dehn quasi-homothetic
functions of the same type and induce the same oriented graph. Then for any pair of adjacent vertices $\alpha$ and $\beta$ in $C(S)$, it holds that
\[
L_{1}(\alpha)\cdot L_{2}(\beta)=L_{2}(\alpha)\cdot L_{1}(\beta).
\]
Furthermore, if $L_{2}$ is positive, the above equation is equivalent to 
\[
\frac{L_{1}(\alpha)}{L_{2}(\alpha)}=\frac{L_{1}(\beta)}{L_{2}(\beta)},
\]
and hence there exists a constant $k\geqslant0$ such that $L_{1}=k\cdot L_{2}$.
\end{Theorem}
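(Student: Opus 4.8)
\emph{The plan.} I shall assume $S$ has genus at least two, since otherwise $C(S)$ has no edges and both assertions are vacuous; recall that $C(S)$ is then connected. The engine is a \emph{comparison principle} that I would isolate first. Suppose $\gamma,\gamma'$ are disjoint vertices of $C(S)$ for which one can find vertices $\varepsilon,\delta$ with $(\gamma,\varepsilon)\in E^{c}(S)$ and $(\gamma',\delta)\in E^{c}(S)$, while $\delta$ is disjoint from both $\gamma$ and $\varepsilon$ and $\varepsilon$ is disjoint from $\gamma'$. Choosing the twisting annuli about $\gamma$ and $\gamma'$ thin, one verifies that $T_{\gamma}^{m}\varepsilon$ and $T_{\gamma'}^{n}\delta$ have zero intersection number, hence are adjacent in $C(S)$, for all $m,n$; since $G_{L_{1}}=G_{L_{2}}$ this forces, for all $m,n$,
\[
L_{1}(T_{\gamma}^{m}\varepsilon)<L_{1}(T_{\gamma'}^{n}\delta)\ \Longleftrightarrow\ L_{2}(T_{\gamma}^{m}\varepsilon)<L_{2}(T_{\gamma'}^{n}\delta).
\]
By condition~(3), $L_{i}(T_{\gamma}^{m}\varepsilon)=f(m)\bigl(A(\gamma,\varepsilon)L_{i}(\gamma)+o(1)\bigr)$ and $L_{i}(T_{\gamma'}^{n}\delta)=f(n)\bigl(A(\gamma',\delta)L_{i}(\gamma')+o(1)\bigr)$; writing $s_{i}=A(\gamma',\delta)L_{i}(\gamma')/\bigl(A(\gamma,\varepsilon)L_{i}(\gamma)\bigr)$ (all four values being positive for now), for large $m,n$ the inequality $L_{i}(T_{\gamma}^{m}\varepsilon)<L_{i}(T_{\gamma'}^{n}\delta)$ reduces to comparing $f(m)/f(n)$ with $s_{i}$, the $o(1)$ terms being swamped because $f(m),f(n)\to\infty$ by condition~(1). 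If the desired identity $L_{1}(\gamma)L_{2}(\gamma')=L_{2}(\gamma)L_{1}(\gamma')$ failed we would have $s_{1}\ne s_{2}$, so by condition~(2) we could pick $m_{k},n_{k}\to\infty$ with $f(m_{k})/f(n_{k})\to r$ for some $r$ strictly between $s_{1}$ and $s_{2}$; then for large $k$ the displayed equivalence is violated, a contradiction. So the principle gives $L_{1}(\gamma)L_{2}(\gamma')=L_{2}(\gamma)L_{1}(\gamma')$; when some of the four values vanish, a parallel argument — comparing a term of size $o(f)$ against one of exact growth $f$ — yields the same identity.

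\emph{Feeding the principle with curves.} To deduce the theorem's identity for a given edge $\{\alpha,\beta\}$ of $C(S)$, I would apply the principle with $(\gamma,\gamma')=(\alpha,\beta)$, which asks for $\varepsilon$ meeting $\alpha$ and disjoint from $\beta$, and $\delta$ meeting $\beta$ and disjoint from $\alpha$, with $\varepsilon,\delta$ disjoint. For all configurations but one, a change-of-coordinates argument (a short case-check on the topological type of the disjoint pair $(\alpha,\beta)$, exhibiting $\varepsilon$ and $\delta$ in a model) produces these, since for genus at least two the complement of $\alpha\cup\beta$ carries enough topology. The lone exception is when one of the curves, say $\alpha$, bounds a one-holed torus $Y$ with $\beta\subset Y$ (the one small subsurface with a single boundary curve that obstructs the construction): then every vertex meeting $\beta$ and disjoint from $\alpha$ is confined to $Y$, and the obvious choice is blocked. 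There I would build the auxiliaries inside $Y$: take $\nu$ meeting $\alpha$ with minimal intersection, so that $\nu\cap Y$ is a single essential arc $a$, which may be chosen disjoint from $\beta$ and not parallel to $\beta$, and let $\mu$ be the core of $Y$ cut along $a$. Then $\mu$ meets $\beta$ (being non-isotopic to it), $\mu$ is disjoint from $\alpha$ and from $\nu$, and $\nu$ is disjoint from $\beta$; so the principle applies with $(\varepsilon,\delta)=(\nu,\mu)$ and again yields $L_{1}(\alpha)L_{2}(\beta)=L_{2}(\alpha)L_{1}(\beta)$.

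\emph{Conclusion and the main obstacle.} The two steps together give $L_{1}(\alpha)L_{2}(\beta)=L_{2}(\alpha)L_{1}(\beta)$ for every edge of $C(S)$, which is the first assertion. If in addition $L_{2}>0$, dividing by $L_{2}(\alpha)L_{2}(\beta)>0$ recasts this as $L_{1}(\alpha)/L_{2}(\alpha)=L_{1}(\beta)/L_{2}(\beta)$; hence $\gamma\mapsto L_{1}(\gamma)/L_{2}(\gamma)$ is constant on each edge and, $C(S)$ being connected, equal to one constant $k\geqslant 0$, so $L_{1}=k\cdot L_{2}$. The limiting-and-density argument of the first step and the algebra of the last are routine; I expect the genuine difficulty to be the topological step of the second — certifying, for every edge and above all in the one-holed-torus configuration where the naive attempt fails, curves $\varepsilon,\delta$ whose Dehn powers about the two ends of the edge stay disjoint, which is exactly what lets the oriented graph compare two sequences of lengths that both tend to infinity.
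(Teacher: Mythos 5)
Your ``comparison principle'' is sound and is essentially the engine of the paper's \prettyref{Theorem:main1.1}: take a chordless $4$-cycle $\alpha$--$\eta$--$\xi$--$\beta$--$\alpha$ in $C(S)$, twist $\eta$ about $\alpha$ and $\xi$ about $\beta$, and use the density of $\{f(m)/f(n)\}$ together with the limits from condition~(3) to force an orientation disagreement between $G_{L_1}$ and $G_{L_2}$ once $L_1(\alpha)L_2(\beta)\ne L_2(\alpha)L_1(\beta)$. The algebra and the final connectivity argument are also correct.

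The gap is in the geometric step, precisely at the case you flag. When $\alpha$ separates off a one-holed torus $Y$ containing $\beta$, the chordless square indeed does not exist, and you are right that any $\delta$ meeting $\beta$ but disjoint from $\alpha$ lives in $Y$. But the proposed repair cannot produce the required auxiliaries. If $a=\nu\cap Y$ is an essential arc in the one-holed torus $Y$ that is disjoint from the essential closed curve $\beta$, then $\beta$ lies in the annulus $Y$ cut along $a$, and hence $\beta$ is isotopic to the core of that annulus, which is exactly the curve $\mu$ you construct. So ``$a$ disjoint from $\beta$ and not parallel to $\beta$'' is an empty condition, and the resulting $\mu$ is isotopic to $\beta$ (hence \emph{disjoint} from it), whereas the principle requires $\mu$ to intersect $\beta$. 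The one-holed-torus case therefore remains open in your argument, and it cannot be closed by any choice of a chordless square, since $\beta$ and $\delta$ fill $Y$.

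The paper handles this case by a structurally different reduction. It works in the subgraph of non-separating curves $W$. For a mixed pair with $\alpha$ separating and $\beta$ non-separating (which is precisely the one-holed-torus situation), it does not seek a square at all; instead it uses a ``V''-shaped configuration $\alpha$--$\beta$, $\xi$--$\beta$ with $\xi$ non-separating and $i(\alpha,\xi)>0$, and the single limit $L_i(T_\alpha^k\xi)/f(k)\to A(\alpha,\xi)L_i(\alpha)$ to transport the putative inequality $L_1(\alpha)L_2(\beta)\neq L_2(\alpha)L_1(\beta)$ to the pair $(T_\alpha^k\xi,\beta)$ of \emph{adjacent non-separating} curves for $k$ large; this is case (2) of \prettyref{Lemma:separating-to-nonseparating}. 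Only then does it invoke the square argument, restricted to $W$, where the square always exists (\prettyref{Theorem:main1.2}). Replacing your direct attempt to build a square inside $Y$ by this two-step reduction is what closes the gap.
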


Since the hyperbolic length functions and the square roots of the
extremal length functions of Riemann surfaces
are Dehn quasi-homothetic functions of the same type, we have:
\begin{Corollary}\label{Corollary:Cor3}
Let $\mathcal{T}(S)$ be the Teichm\"{u}ller space of $S$. For any
$X,Y\in\mathcal{T}(S)$, let $C_{X}(S)$ denote the oriented graph
induced by the hyperbolic or extremal length function of $X$, and
$C_{Y}(S)$ similarly for $Y$. Then $C_{X}(S)=C_{Y}(S)$ implies
$X=Y$.
\end{Corollary}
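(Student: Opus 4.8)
The plan is to deduce Corollary~\ref{Corollary:Cor3} from Theorem~\ref{Theorem:main1} in three steps: first exhibit the marked hyperbolic length function, and the square root of the marked extremal length function, of a point of $\mathcal{T}(S)$ as a \emph{positive} Dehn quasi-homothetic function of one fixed type; then apply Theorem~\ref{Theorem:main1} to turn the hypothesis $C_{X}(S)=C_{Y}(S)$ into proportionality of the two length functions; and finally remove the scalar to get $X=Y$.

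For the first step, fix $X\in\mathcal{T}(S)$ and let $\ell_{X}:V(S)\to\mathbb{R}_{+}$ be its hyperbolic length function. I would use the classical estimate $i(T_{\alpha}^{n}\beta,\gamma)=\lvert n\rvert\,i(\alpha,\beta)\,i(\alpha,\gamma)+O(1)$, valid for every simple closed curve $\gamma$ and every $(\alpha,\beta)\in E^{c}(S)$ (so $i(\alpha,\beta)>0$), which says exactly that $\tfrac1n\,T_{\alpha}^{n}\beta\to i(\alpha,\beta)\cdot\alpha$ in the space of measured laminations $\mathcal{ML}(S)$. Since $\gamma\mapsto\ell_{X}(\gamma)$ extends continuously to $\mathcal{ML}(S)$ and is homogeneous of degree one,
\[
\lim_{n\to\infty}\frac{\ell_{X}(T_{\alpha}^{n}\beta)}{n}=\ell_{X}\bigl(i(\alpha,\beta)\cdot\alpha\bigr)=i(\alpha,\beta)\cdot\ell_{X}(\alpha).
\]
Likewise, by Kerckhoff's theorem that extremal length extends continuously to $\mathcal{ML}(S)$ and is homogeneous of degree two, $\tfrac{1}{n^{2}}\operatorname{Ext}_{X}(T_{\alpha}^{n}\beta)\to i(\alpha,\beta)^{2}\operatorname{Ext}_{X}(\alpha)$, hence after taking square roots
\[
\lim_{n\to\infty}\frac{\sqrt{\operatorname{Ext}_{X}(T_{\alpha}^{n}\beta)}}{n}=i(\alpha,\beta)\cdot\sqrt{\operatorname{Ext}_{X}(\alpha)}.
\]
So both $\ell_{X}$ and $\sqrt{\operatorname{Ext}_{X}}$ are Dehn quasi-homothetic of type $(f,A)$ with $f(n)=n$ and $A=i(\cdot,\cdot)\!\restriction_{E^{c}(S)}$: condition (1) is clear, condition (2) holds because $\{k/j:j,k>N\}$ is dense in $\mathbb{R}_{+}$ for every $N$, and condition (3) is the two displayed limits; moreover these functions are strictly positive, essential simple closed curves having positive hyperbolic and positive extremal length. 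The crucial point is that the \emph{same} normalization $f(n)=n$ and the \emph{same} coefficient function $A=i(\cdot,\cdot)$ appear in both cases, so ``hyperbolic length'' and ``square-root extremal length'' really are functions of one and the same type.

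Now assume $C_{X}(S)=C_{Y}(S)$, i.e.\ the oriented graphs induced by the two hyperbolic length functions (resp.\ the two square-root extremal length functions) agree. Applying Theorem~\ref{Theorem:main1} with $L_{1},L_{2}$ these two positive functions of the common type above yields a constant $k\ge 0$ with $\ell_{X}=k\,\ell_{Y}$ (resp.\ $\sqrt{\operatorname{Ext}_{X}}=k\,\sqrt{\operatorname{Ext}_{Y}}$), and $k>0$ since $\ell_{X}>0$. Thus $X$ and $Y$ have projectively equal length functions on $V(S)$. To conclude $X=Y$ I would invoke the injectivity of Thurston's embedding $\mathcal{T}(S)\hookrightarrow\mathbb{P}\bigl(\mathbb{R}_{\ge 0}^{V(S)}\bigr)$, $X\mapsto[\ell_{X}]$, in the hyperbolic case, and the analogous embedding via $[\operatorname{Ext}_{X}]$ (Kerckhoff, Masur) in the extremal case; either immediately gives $X=Y$ from $[\ell_{X}]=[\ell_{Y}]$. (Alternatively one could first force $k=1$ by a Gauss--Bonnet/collar-lemma comparison of systoles and then quote the plain injectivity of the length-spectrum map, but the projective statement is the shortest route.)

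The only genuinely mathematical work is the first step, and the thing to be careful about there is precisely the coincidence of \emph{types}: it is what makes a single instance of Theorem~\ref{Theorem:main1} cover both the hyperbolic and the extremal case, and it is the reason the square root—rather than $\operatorname{Ext}_{X}$ itself—is the correct quantity to feed in. The last step is essentially a citation, but one should check that the required compactification/injectivity result is on record for extremal length and not only for hyperbolic length.
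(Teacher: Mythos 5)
You have a genuine gap: the corollary, as the paper reads it, allows $C_{X}(S)$ and $C_{Y}(S)$ to be induced by \emph{different} length functions (one hyperbolic, the other extremal). This is the content of the remark after the corollary, that ``the mappings $\Phi\circ\mathrm{Hyp}$ and $\Phi\circ\mathrm{Ext}$ remain injective, \emph{and their images in $D(S)$ remain disjoint as well}.'' Your proposal only treats the two same-type cases and silently restricts the hypothesis to ``both hyperbolic'' or ``both extremal.'' Under the paper's reading, you must also rule out the mixed case, i.e.\ show that no $X,Y\in\mathcal{T}(S)$ and $k>0$ satisfy $L_{X}=k\sqrt{\mathrm{Ext}_{Y}}$. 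This is not automatic from any projective-embedding result, and is exactly what Lemma~\ref{Lemma:hyp-neq-ext} in the paper supplies (via a Jenkins--Strebel argument showing that a single flat metric cannot be extremal for two disjoint curves simultaneously). Without that step, your proof establishes injectivity of each of $\Phi\circ\mathrm{Hyp}$ and $\Phi\circ\mathrm{Ext}$ but not disjointness of their images, and therefore does not prove the corollary as stated.

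For the two same-type cases your argument is sound, and it does take a mildly different route from the paper's. After obtaining $L_{X}=k\cdot L_{Y}$ from Theorem~\ref{Theorem:main1}, the paper eliminates $k$ via the Thurston asymmetric distance formula (both $d_{\mathrm{Th}}(X,Y)=-\ln k$ and $d_{\mathrm{Th}}(Y,X)=\ln k$ must be nonnegative, forcing $k=1$ and then $X=Y$), and analogously via Kerckhoff's formula for the Teichm\"uller distance in the extremal case. You instead cite the injectivity of the Thurston embedding $X\mapsto[\ell_{X}]$ into $\mathbb{P}(\mathbb{R}_{\ge0}^{V(S)})$, and of the Gardiner--Masur embedding $X\mapsto[\sqrt{\mathrm{Ext}_{X}}]$, to pass directly from projective equality to $X=Y$. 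Both routes are legitimate; the paper's avoids needing the projective injectivity statement for extremal length as a black box, while yours is shorter once those facts are taken as known. The identification of the common type $(n\mapsto n,\; (\alpha,\beta)\mapsto i(\alpha,\beta))$ is essentially the same computation as in Subsection~\ref{sub:examples}.
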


There is an alternative interpretation of this corollary. Let $D(S)$ denote the space of all oriented graphs on $C(S)$ induced by real-valued functions on $V(S)$, and let $\Phi:\mathbb{R}^{V(S)}\rightarrow D(S)$ be the map that sends each such function to the oriented graph induced by it. Consider
$$
\mathcal{T}(S)\xrightarrow[\mathrm{Ext}]{\mathrm{Hyp}}\mathbb{R}^{V(S)}\xrightarrow{\Phi}D(S),
$$
where $\mathrm{Hyp}:\mathcal{T}(S)\rightarrow\mathbb{R}^{V(S)}$ and $\mathrm{Ext}:\mathcal{T}(S)\rightarrow\mathbb{R}^{V(S)}$ send each $X\in\mathcal{T}(S)$ to its hyperbolic and extremal length function, respectively. It is well known that both $\mathrm{Hyp}$ and $\mathrm{Ext}$ are injective, and their images in $\mathbb{R}^{V(S)}$ are disjoint. Our corollary asserts that, although $\Phi$ forgets most of the information, the mappings $\Phi\circ\mathrm{Hyp}$ and $\Phi\circ\mathrm{Ext}$ remain injective, and their images in $D(S)$ remain disjoint as well.

Furthermore, we can think of $C_{X}(S)$ as a discrete version of the hyperbolic surface $X$. This suggests a connection to Ivanov's famous metaconjecture:
``Every object naturally associated to a surface $S$ and having a sufficiently rich structure has $\mathrm{Mod}_{S}^{\pm}$ as its automorphism group. Moreover, this can be proved by a reduction to the theorem about the automorphisms of $C(S)$.'' So a natural question arises: does $C_{X}(S)$ have $\mathrm{Mod}_{S}^{\pm}$ as its automorphism group?

The answer is negative. In fact, since $C_{X}(S)$ is a discrete version of $X$, its automorphism group is roughly the isometry group of $X$. Nevertheless, every automorphism $\varphi$ of $C_{X}(S)$ is induced by a self-homeomorphism $f$ of $S$, i.e., $f(\alpha)=\varphi\cdot\alpha$ for all $\alpha\in V(S)$.

Let us consider a more general case. Let $L$ be a real-valued function on $V(S)$, and let $G_{L}$ be the oriented graph induced by $L$. Is every automorphism of $G_{L}$ induced by a self-homeomorphism of $S$? Without additional constraints on $L$, the answer is negative, since there is no substantial relationship between $G_{L}$ and $S$.

Nevertheless, we find that imposing a single condition on $L$ is enough, though it is seemingly unrelated to $S$:
\begin{Definition}
We say that a function $L:V(S)\rightarrow\mathbb{R}$ has \emph{finite sublevel sets} if $L^{-1}((-\infty,r])$ is finite for every $r\in\mathbb{R}$.
\end{Definition}

Our second main result is:
\begin{Theorem}\label{Theorem:main2}
If $L$ has finite sublevel sets, then every automorphism of $G_{L}$ is induced by a self-homeomorphism of $S$.
\end{Theorem}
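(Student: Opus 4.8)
The plan is to reduce \prettyref{Theorem:main2} to the curve-complex rigidity theorem of Ivanov (together with Korkmaz and Luo in the low-genus cases), which asserts $\mathrm{Aut}(C(S))=\mathrm{Mod}_S^{\pm}$: it is enough to prove that every $\varphi\in\mathrm{Aut}(G_L)$ is a simplicial automorphism of $C(S)$. Write $\alpha\sim\beta$ for adjacency in $C(S)$, and for $\alpha\in V(S)$ let $N^{+}(\alpha),N^{-}(\alpha)$ denote the out- and in-neighbourhoods of $\alpha$ in $G_L$ and put $Z(\alpha)=\{\beta:\alpha\sim\beta,\ L(\beta)=L(\alpha)\}$. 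The finite-sublevel-set hypothesis enters here: $N^{-}(\alpha)$ and $Z(\alpha)$ are both contained in the finite set $L^{-1}((-\infty,L(\alpha)])$, so $\mathrm{lk}_{C(S)}(\alpha)=N^{+}(\alpha)\sqcup N^{-}(\alpha)\sqcup Z(\alpha)$ with $N^{+}(\alpha)$ \emph{cofinite} in the link. Since $\varphi$ preserves directed edges, $\varphi(N^{\pm}(\alpha))=N^{\pm}(\varphi\alpha)$; it follows that $\varphi$ (and likewise $\varphi^{-1}$) is a \emph{near-automorphism} of $C(S)$, in the sense that $\varphi(\mathrm{lk}(\alpha))$ and $\mathrm{lk}(\varphi\alpha)$ differ by a finite set for every $\alpha$.

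The heart of the matter is the rigidity claim that a near-automorphism of $C(S)$ is a genuine automorphism. When $\alpha\sim\beta$ are joined by a directed edge of $G_L$, $\varphi$ carries that edge to a directed edge and $\varphi\alpha\sim\varphi\beta$ at once; the whole difficulty is therefore confined to the adjacent pairs with $L(\alpha)=L(\beta)$, i.e.\ the pairs $\beta\in Z(\alpha)$. For such a pair the edge-link $\mathrm{lk}(\alpha)\cap\mathrm{lk}(\beta)$ is infinite and cofinitely much of it lies in $N^{+}(\alpha)\cap N^{+}(\beta)$, so $N^{+}(\varphi\alpha)\cap N^{+}(\varphi\beta)$ is infinite and hence $d_{C(S)}(\varphi\alpha,\varphi\beta)\le 2$; it remains to rule out $d(\varphi\alpha,\varphi\beta)=2$. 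I would do this by induction on the complexity of $S$, the inductive statement being that a near-isomorphism between cofinite subcomplexes of curve complexes of surfaces of smaller complexity agrees, off a finite set, with a simplicial isomorphism. Granting this: pick $\gamma\in\mathrm{lk}(\alpha)\cap\mathrm{lk}(\beta)$ with $\varphi\gamma$ still a common neighbour of $\varphi\alpha,\varphi\beta$; then $\varphi$ restricts, off finitely many vertices, to a near-isomorphism between $\mathrm{lk}(\gamma)\cong C(S\setminus\gamma)$ and $\mathrm{lk}(\varphi\gamma)\cong C(S\setminus\varphi\gamma)$ (or joins of two such complexes, when the relevant curve is separating), which are of smaller complexity; by induction it agrees off a finite set with a simplicial isomorphism, necessarily induced by a homeomorphism $S\setminus\gamma\to S\setminus\varphi\gamma$. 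But that isomorphism sends the edge $\{\alpha,\beta\}$ of $\mathrm{lk}(\gamma)$ to $\{\varphi\alpha,\varphi\beta\}$, which is at distance $\ge 2$ in $\mathrm{lk}(\varphi\gamma)$---a contradiction. Running the argument also for $\varphi^{-1}$ gives the reverse implication, so $\varphi$ is a simplicial automorphism of $C(S)$, hence by Ivanov's theorem is induced by a self-homeomorphism of $S$.

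I expect the inductive rigidity step to be the main obstacle, essentially because the distinction between ``$\alpha\sim\beta$'' and ``$d(\alpha,\beta)=2$ with infinitely many common neighbours'' is invisible to $G_L$ at the local level and must be extracted from the global structure. Carrying out the induction cleanly will require formulating the inductive hypothesis so that it survives passing to \emph{cofinite subcomplexes}, treating the bordered surfaces produced by cutting (via the rigidity of their curve and arc-and-curve complexes) as well as links of separating curves, which are joins $C(S_1)\ast C(S_2)$ where ``distance $2$'' between points of distinct factors is automatic and needs a dedicated argument, and handling the small-complexity base cases, where $C(S)$ may be a Farey graph or disconnected and the ambient rigidity statements take a special form.
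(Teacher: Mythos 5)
Your reduction to Ivanov's theorem and your reformulation of the finite-sublevel-set hypothesis (that $N^{-}(\alpha)\cup Z(\alpha)$ is finite, so $N^{+}(\alpha)$ is cofinite in $\mathrm{lk}_{C(S)}(\alpha)$) are both correct and a sensible way to start. However, your argument has a genuine gap at the step you yourself flag as ``the main obstacle'': you rest the entire proof on an inductive near-automorphism rigidity theorem---that a map which preserves links up to finite error between cofinite subcomplexes of curve complexes must agree off a finite set with a genuine simplicial isomorphism---and you do not prove it. That statement is not in the literature (the known rigidity results concern honest simplicial automorphisms, superinjective maps, or injective simplicial maps, none of which covers ``equality up to finite error on cofinite subcomplexes''), and the obstacles you list---formulating the inductive hypothesis so it survives passing to cofinite subcomplexes, joins $C(S_1)\ast C(S_2)$ at separating links, and the degenerate base cases where $C(S)$ is a Farey graph or disconnected---are real and unresolved, not routine. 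There is also a second, more local gap: even granting the inductive statement, when you restrict to $\mathrm{lk}(\gamma)$ and obtain a simplicial isomorphism that agrees with $\varphi$ only ``off a finite set,'' nothing ensures that $\alpha$ and $\beta$ are outside that finite exceptional set, so the final contradiction (``sends $\{\alpha,\beta\}$ to $\{\varphi\alpha,\varphi\beta\}$ at distance $\ge 2$'') is not forced.

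For comparison, the paper's proof takes a completely different and much more hands-on route. For disjoint $\alpha,\beta$ with $L(\alpha)=L(\beta)$, it extends them to a system of decomposing curves with strictly increasing $L$-values (\prettyref{Corollary:pants-decomposition}), uses \prettyref{Lemma:nonseparating_characterization} to show $\mathrm{Aut}(G_L)$ preserves the separating/non-separating dichotomy, and then argues directly by contradiction assuming $i(\varphi\alpha,\varphi\beta)>0$. The contradiction is extracted from explicit curve configurations and Dehn twists ($\tau_k=T_\beta^k\tau$, $\xi_k=T_\alpha^k\xi$, etc.), using the finite-sublevel-set hypothesis to force $L$ to grow along twist orbits and using convergence $T_\beta^k\tau/k\,i(\tau,\beta)\to\beta$ in $\mathcal{MF}$ to control intersection numbers. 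This stays entirely inside $C(S)$ at the original complexity, needs no induction, and has no exceptional-set issue. If you want to salvage your approach, you would first have to prove the near-automorphism rigidity theorem as a standalone result; as written, the proposal does not constitute a proof.
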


Since hyperbolic and extremal length functions of Riemann surfaces have finite sublevel sets, we can prove that:
\begin{Corollary}\label{Corollary:Cor6}
Let $\mathcal{T}(S)$ be the Teichmüller space of $S$. For any $X\in\mathcal{T}(S)$, let $C_{X}(S)$ denote the oriented graph induced by the hyperbolic or extremal length function of $X$. Then every automorphism of $C_{X}(S)$ is induced by an isometry of $X$.
\end{Corollary}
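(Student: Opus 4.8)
The plan is to deduce \prettyref{Corollary:Cor6} from \prettyref{Theorem:main2} and \prettyref{Corollary:Cor3}, with one further classical input: that a mapping class fixing a point of Teichm\"{u}ller space is realized by an isometry of the corresponding surface. First I would apply \prettyref{Theorem:main2}. The graph $C_X(S)$ is induced by the hyperbolic length function $L_X$ of $X$ (or by its extremal length function), and this function has finite sublevel sets: on a fixed closed hyperbolic surface only finitely many homotopy classes of simple closed curves have length below a given bound (properness of the length spectrum), and in the extremal case this follows from the elementary inequality $\mathrm{Ext}_X(\gamma)\geqslant \ell_X(\gamma)^2/\mathrm{Area}(X)$ --- a fact also recorded in the text just before the corollary. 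Hence every automorphism $\varphi$ of $C_X(S)$ is induced by a self-homeomorphism $f$ of $S$, i.e.\ $f(\alpha)=\varphi\cdot\alpha$ for all $\alpha\in V(S)$.

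Next I would use that $\varphi$, being an automorphism of the \emph{oriented} graph, preserves edge directions. Since a homeomorphism preserves disjointness of curves, this says exactly that for every disjoint pair $\alpha,\beta\in V(S)$,
\[
L_X(\alpha)<L_X(\beta)\iff L_X(f\alpha)<L_X(f\beta),
\]
so that $L_X\circ f$ and $L_X$ induce the same oriented graph. But $L_X\circ f$ is itself a length function of the same type: precomposing the marking of $X$ with $f$ produces a point $Y\in\mathcal{T}(S)$ --- the image of $X$ under the mapping class of $f$ --- whose hyperbolic (resp.\ extremal) length function is precisely $\alpha\mapsto L_X(f\alpha)$. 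Therefore $C_Y(S)=C_X(S)$, and \prettyref{Corollary:Cor3} forces $Y=X$ in $\mathcal{T}(S)$.

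Finally, $Y=X$ says that the mapping class of $f$ stabilizes $X$ in $\mathrm{Mod}_S^{\pm}$. For a closed surface of genus at least two, the stabilizer of $X$ in $\mathrm{Mod}_S^{\pm}$ is exactly the image of the finite isometry group $\mathrm{Isom}^{\pm}(X)$ under the natural injection $\mathrm{Isom}^{\pm}(X)\hookrightarrow\mathrm{Mod}_S^{\pm}$; when $C_X(S)$ is built from extremal length, ``isometry'' here may equally be read as ``conformal or anticonformal automorphism'', since the conformal class of $X$ carries a unique hyperbolic metric. Thus $f$ is isotopic to an isometry $h$ of $X$, and since isotopic homeomorphisms act identically on homotopy classes of simple closed curves, $h$ induces the same automorphism $\varphi$ of $C_X(S)$. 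This gives the corollary.

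I expect the only genuine bookkeeping to be in the second step: checking carefully that $L_X\circ f$ really is the length function of a well-defined $Y\in\mathcal{T}(S)$ of the correct type, and that it induces the same oriented graph as $L_X$ --- including the ``non-edges'' arising from ties $L_X(\alpha)=L_X(\beta)$. The first and third steps are routine applications of the finiteness of the length spectrum and of the classical identification of isometry groups with Teichm\"{u}ller-space stabilizers, so essentially all of the substance of \prettyref{Corollary:Cor6} is already contained in \prettyref{Theorem:main2} and \prettyref{Corollary:Cor3}.
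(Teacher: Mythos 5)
Your proof is correct, and it arrives at the same destination via the same two key ingredients --- \prettyref{Theorem:main2} (to get a self-homeomorphism $f$) and the proportionality rigidity of \prettyref{Theorem:main1} --- but routes the second half differently from the paper. The paper instead introduces \prettyref{Lemma:periodic}, which (using the $\mathrm{Mod}^{\pm}(S)$-invariance of $(\alpha,\beta)\mapsto i(\alpha,\beta)$) shows directly that $L_{X}\circ\varphi$ is a Dehn quasi-homothetic function of the same type as $L_X$, invokes \prettyref{Theorem:main1} to get $L_X = c\cdot L_X\circ\varphi$, forces $c=1$ by finiteness of sublevel sets, and then deduces $X=Y$ from the $9g-9$ theorem (hyperbolic case) or Kerckhoff's distance formula (extremal case). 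You short-circuit this: having $f$, you observe that $L_X\circ f$ is the length function of the pullback point $Y\in\mathcal{T}(S)$, check that $G_{L_X\circ f}=G_{L_X}$ directly from the automorphism property (including ties, which you correctly note), and then invoke \prettyref{Corollary:Cor3} wholesale to conclude $Y=X$. This is a modest economy: you reuse an already-stated corollary rather than developing \prettyref{Lemma:periodic}, whose extra conclusion (periodicity of $\varphi$) is not needed here. The final step --- from $Y=X$ to ``$f$ is isotopic to an isometry'' --- is the same classical fact the paper uses (the definition of $\mathcal{T}(S)$ as $\mathcal{H}(S)/\!\sim$), and you correctly handle the extremal case by passing to the unique hyperbolic metric in the conformal class. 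Both arguments are sound and of essentially equal depth; yours just packages the rigidity input through \prettyref{Corollary:Cor3} rather than through \prettyref{Lemma:periodic}.
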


\subsection*{Some remarks on history.}

To the author's knowledge, McShane and Parlier were the first to study
the relative lengths of all pairs of simple closed geodesics on closed
surfaces or surfaces with specified boundary lengths \cite{mcshane2008multiplicities}.
They showed that the length ordering of simple closed geodesics uniquely
determines a point in Teichm\"{u}ller space. In \cite{parlier2025orderingcurvessurfaces},
Parlier, Vo, and Xu extended this work by considering the length orderings
of closed geodesics on hyperbolic surfaces, including those with self-intersections,
thereby removing the need for fixed boundary length. However, we focus
on a different perspective: instead of studying hyperbolic length
functions on closed curves on surfaces of finite type, we consider
the Dehn quasi-homothetic functions on simple closed curves on closed
surfaces. Our shift from metric to topology is based on the concept
of oriented graph, which provides a framework for understanding the
underlying combinatorial structure.

\subsection*{Outline of the paper.}

In \prettyref{sub:examples} we present some examples of Dehn quasi-homothetic
functions to explain further the setting we adopt. In particular,
we find that the hyperbolic and extremal length functions have the
same type. We devote \prettyref{subsec:Dehn quasi-homothetic} to
the rigidity of Dehn quasi-homothetic functions of the same type on
infinite graphs rather than the curve complex $C(S)$, and \prettyref{Theorem:main1}
follows. We then prove \prettyref{Theorem:main2} in \prettyref{subsec:finitely-small}.
Finally, we prove \prettyref{Corollary:Cor3} and \prettyref{Corollary:Cor6}
in \prettyref{sec:applications}.

\subsection*{Acknowledgments.}

The authors would like to express their sincere gratitude to Qiyu Chen, Xiaoming Du, Lixin Liu, Yusen Long, Sicheng Lu, Huiping Pan, Yaozhong Shi, Weixu Su, Hao Sun, Bin Xu and Youliang Zhong for their helpful conversations and encouragement.

\section{Preliminaries}\label{sec:pre}

Throughout this paper, let $S$ be an orientable closed surface with
genus $g\geq2$.

\subsection{Geometric intersection number}

A \emph{simple closed curve} is a topological embedding $\gamma:S^{1}\rightarrow S$,
where $S^{1}$ is the standard circle. For simplicity, we shall refer
to simple closed curves as \emph{curves}, since no other types of
curves will be discussed. Note that we will identify a curve with
its image in $S$. We say a curve is \emph{essential }if it is not
null-homotopic.

For any simple closed curves $\alpha_{1},\alpha_{2}$ on
$S$, the \emph{geometric intersection number} between $\alpha_{1}$
and $\alpha_{2}$ is defined as 
\[
i(\alpha_{1},\alpha_{2})\coloneqq\inf\{\#(\alpha_{1}'\cap\alpha_{2}'):\alpha_{j}'\text{ is freely isotopic to }\alpha_{j}\}.
\]
In other words, $i(\alpha_{1},\alpha_{2})$ is the minimal number
of intersection points between $\alpha_{1}$ and $\alpha_{2}$, up
to free isotopy.

Note that we will identify each simple closed curve
$\alpha$ with its free isotopy class $[\alpha]$. For any free isotopy
classes $[\alpha_{1}]$ and $[\alpha_{2}]$, the geometric intersection
number between them is well defined by setting $i([\alpha_{1}],[\alpha_{2}])\coloneqq i(\alpha_{1},\alpha_{2})$.
Furthermore, $[\alpha_{1}]$ and $[\alpha_{2}]$ have disjoint representatives
if and only if $i([\alpha_{1}],[\alpha_{2}])=0$. Therefore we say that $[\alpha_{1}]$ and $[\alpha_{2}]$ are \emph{disjoint}
if $[\alpha_{1}]\neq[\alpha_{2}]$ and $i([\alpha_{1}],[\alpha_{2}])=0$.

We say that $\alpha_{1},\cdots,\alpha_{3g-3}\in V(S)$ form a \emph{system
of decomposing curves} if for any $j,k$ we have $i(\alpha_{j},\alpha_{k})=0$.

For more details we refer to Chapter 1 of \cite{farb2012primer}.

\subsection{Curve complex}

The curve complex $C(S)$ was introduced as a simplicial complex by
Harvey \cite{harvey1981boundary}. However, throughout this paper,
we only consider the 1-skeleton of $C(S)$, which is an undirected
graph defined as follows: the vertex set $V(S)$ consists of isotopy
classes of essential curves in $S$, and two vertices are connected
by an edge if and only if they are disjoint. For brevity,
we will identify the curve complex with its 1-skeleton.
Note that in \cite{harvey1981boundary} Harvey showed that $C(S)$ is a connected graph.

We say that a bijection $\varphi:V(S)\rightarrow V(S)$ is an \emph{automorphism}
of $C(S)$ if for any $\alpha,\beta\in V(S)$ we have
\[
i(\alpha,\beta)=0\Leftrightarrow i(\varphi\cdot\alpha,\varphi\cdot\beta)=0.
\]
It is clear that every self-homeomorphism $f:S\rightarrow S$ naturally
induces an automorphism $\varphi$ of $C(S)$ by setting $\varphi\cdot\alpha\coloneqq f(\alpha)$.
On the other hand, let $\mathrm{Mod}^{\pm}(S)$ denote the extended
mapping class group consisting of all self-homeomorphisms of $S$, 
it is well known that:

\begin{theorem}[Ivanov, \cite{ivanov1997automorphisms}]\label{Theorem:Ivanov}
Let $S$ be a closed surface of genus $g \geqslant 2$. Then every automorphism of the curve complex $C(S)$ is induced by an element of the extended mapping class group $\operatorname{Mod}^{ \pm}(S)$.
\end{theorem}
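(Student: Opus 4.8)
The plan is to follow Ivanov's strategy: first show that an automorphism $\varphi$ of $C(S)$ respects every piece of topological information that is encoded in the incidence structure of $C(S)$, then build a self-homeomorphism of $S$ realizing it. The heart is \emph{recognizing topological type}: one wants combinatorial characterizations, intrinsic to the graph $C(S)$, of (a) whether a vertex $\alpha$ is represented by a non-separating curve; (b) in the separating case, the unordered pair of genera into which $\alpha$ splits $S$; and (c) for a fixed pants decomposition $P=\{\alpha_1,\dots,\alpha_{3g-3}\}$, which pairs $\alpha_i,\alpha_j$ cobound a pair of pants in $S\setminus P$ and whether that pants has two of its cuffs identified under $P$. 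The cleanest form of (a) is: $\alpha$ is non-separating if and only if the link $\mathrm{Lk}(\alpha)$ of $\alpha$ in $C(S)$ is not a join of two nonempty subgraphs. Indeed, if $\alpha$ separates $S$ into $S'$ and $S''$ then any curve disjoint from $\alpha$ lies in $S'$ or in $S''$, and curves on opposite sides are disjoint, so $\mathrm{Lk}(\alpha)\cong C(S')\ast C(S'')$; whereas if $\alpha$ is non-separating then $\mathrm{Lk}(\alpha)\cong C(S\setminus\alpha)$, the curve complex of a connected surface of complexity $3g-4\geq 2$, which is connected and not a join. The splitting type (b) is then recovered from the isomorphism types of the join factors, and (c) by an analogous ``disjointness pattern'' analysis inside the star of $P$. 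One checks directly that $\varphi$ carries all of these patterns to themselves.

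For the reconstruction, recall that the homeomorphism type of a closed orientable surface assembled from $2g-2$ pairs of pants is determined by its dual trivalent graph together with the identification data of (c); hence the preservation statements above force $\varphi(P)$ to carry the same gluing combinatorics as $P$, so there is a self-homeomorphism $f$ of $S$ with $f(\alpha_i)$ isotopic to $\varphi(\alpha_i)$ for all $i$. Composing $\varphi$ with the automorphism induced by $f^{-1}$, we may assume $\varphi$ fixes every vertex of $P$ and hence fixes the star of the simplex $P$ pointwise. It then remains to prove that such a $\varphi$ is the identity, which is a delicate induction on $i(\beta,P)=\sum_{j}i(\beta,\alpha_j)$: using elementary moves one reduces step by step to complexity-one subsurfaces $Y\subset S$ cut off by already-fixed curves of $P$, inside which the curves contained in $Y$ span a copy of the Farey graph; the key input is that the automorphism group of the Farey graph is $\mathrm{PGL}_2(\mathbb{Z})$ and that each of its elements is induced by a homeomorphism of $Y$. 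Propagating this through the finitely many such pieces and using connectedness of $C(S)$ yields $\varphi=\Id$, and unwinding the previous paragraph shows the original automorphism is induced by $f\in\mathrm{Mod}^{\pm}(S)$.

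The steps I expect to be the main obstacle are the type recognition in the first paragraph --- turning ``topological type'' into conditions phrased purely in terms of the incidence relation of $C(S)$, and checking that they are mutually distinguishing --- and the triviality of the pants-decomposition stabilizer. Both become delicate precisely in the low-genus case $g=2$, where several of the relevant subsurface complexes degenerate to their minimal size; this is exactly where Ivanov's original argument, and its later refinements, required the most care.
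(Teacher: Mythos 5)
This statement is not proved in the paper at all: it is quoted verbatim as Ivanov's theorem with a citation to \cite{ivanov1997automorphisms}, and the paper only ever uses it as a black box (in the proof of Theorem \ref{Theorem:main2}). So there is no in-paper argument to compare yours against; what can be assessed is whether your sketch would actually constitute a proof of the cited result.

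Your outline correctly reproduces the standard strategy (topological-type recognition, reduction to a pants decomposition, then a rigidity argument via the Alexander method), and the join characterization of separating curves in part (a) is right: for a separating $\alpha$ the non-adjacency graph on $\mathrm{Lk}(\alpha)$ is disconnected, while for non-separating $\alpha$ the complement $S\setminus\alpha$ has complexity $3g-4\geqslant 2$ and its curve complex is connected with connected non-adjacency graph. But there is one concrete error in the final step: the curves contained in a complexity-one subsurface $Y$ (a one-holed torus or four-holed sphere) do \emph{not} span a copy of the Farey graph inside $C(S)$ --- any two distinct essential curves in such a $Y$ intersect, so they span an edgeless graph, and the $\mathrm{PGL}_2(\mathbb{Z})$ rigidity of the Farey graph cannot be invoked directly. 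Recovering the Farey adjacency (intersection number $1$ or $2$ inside $Y$) purely from the disjointness relation of the ambient complex is precisely the technical heart of the proof; in Farb--Margalit's exposition this is done via the ``sharing pairs'' characterization of $i(\beta,\beta')=1$ in a torus with one boundary component, and in Ivanov's original argument by a comparable device. Without that bridge, the induction on $i(\beta,P)$ in your second paragraph does not close. The type-recognition claims (b) and (c) and the triviality of the pointwise stabilizer of the star of $P$ are also asserted rather than argued, but those are omissions of known (if lengthy) verifications rather than errors; the Farey-graph step is the one place where the sketch, as written, is false.
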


For more details on curve complex we refer to Section 4.1 of \cite{farb2012primer}.

\subsection{Oriented graph on curve complex}\label{sec:oriented_curve_graph}

Although in \prettyref{sec:Introduction} we have briefly introduced the oriented graphs induced by functions, we now give a more precise definition in this subsection.

\begin{definition}
Recall that $V(S)$ is the vertex set of the curve complex $C(S)$.
An \emph{oriented graph} on $C(S)$ is an asymmetric binary relation
$G\subseteq V(S)\times V(S)$ such that
\[
(\alpha,\beta)\in G\Rightarrow i(\alpha,\beta)=0,
\]
where $i(\alpha,\beta)$ is the geometric intersection number between
$\alpha$ and $\beta$. In other words, regarding $G$ as a directed
graph, $G$ has vertex set $V(S)$, contains at most one directed
edge between $\alpha$ and $\beta$ for any $\alpha,\beta\in V(S)$, and
such an edge exists only when $\alpha$ and $\beta$ are disjoint.

We say that a bijection $\varphi:V(S)\rightarrow V(S)$ is an \emph{automorphism}
of $G$ if for any $\alpha,\beta\in V(S)$ we have
\[
(\alpha,\beta)\in G\Leftrightarrow(\varphi\cdot\alpha,\varphi\cdot\beta)\in G.
\]
The \emph{automorphism group} of $G$, denoted by $\mathrm{Aut}(G)$,
consists of all automorphisms of $G$.
\end{definition}

\begin{example}
	Let $\alpha,\beta,\gamma\in V(S)$ be pairwise disjoint
	curves, and set $G=\{(\alpha,\beta),(\beta,\gamma),(\gamma,\alpha)\}$,
	then $G$ is an oriented graph on $C(S)$. Note that each $\tau\in V(S)-\{\alpha,\beta,\gamma\}$
	is still considered a vertex in $G$, even though it is not incident
	to any edge. Thus $\mathrm{Aut}(G)\cong\mathbb{Z}_{3}\times\mathrm{Sym}(\mathbb{N})$,
	where $\mathrm{Sym}(\mathbb{N})$ is the symmetric group consisting
	of all bijections from $\mathbb{N}$ to $\mathbb{N}$.
\end{example}

\begin{definition}
	Let $L: V(S) \rightarrow \mathbb{R}$ be a real-valued function on $V(S)$. The \emph{oriented graph $G_L$ induced by} $L$ is defined to be
	$$
	\{(\alpha, \beta) \in V(S) \times V(S) \mid i(\alpha, \beta)=0 \text { and } L(\alpha)<L(\beta)\}.
	$$
	This construction ensures that edges in $G_L$ encode both the disjointness of curves and the ordering imposed by $L$.
\end{definition}

When there is no ambiguity, we denote $(\alpha,\beta)\in G_{L}$ by
$\alpha<\beta$. Since $G_{L}$ is a directed graph, we say that a
sequence 
\[
\alpha_{1}<\alpha_{2}<\cdots<\alpha_{m}
\]
forms a\emph{ path} of \emph{length} $m-1$ from $\alpha_{1}$ to
$\alpha_{m}$ in $G_{L}$. Note that every $\varphi\in\mathrm{Aut}(G_{L})$
sends paths to paths and preserves their lengths.

The \emph{distance} $d_{G_{L}}(\alpha,\beta)$ from $\alpha$ to $\beta$
is defined as the length of a shortest path from $\alpha$ to $\beta$
in $G_{L}$; if no such path exists, we set $d_{G_{L}}(\alpha,\beta)\coloneqq\infty$.

We say that a subgraph $(V_{1},E_{1})$ of $G_{L}$ is a \emph{tournament}
if there is exactly one edge in $E_{1}$ between every pair of distinct
vertices in $V_{1}$.

For more details on graph theory we refer to \cite{diestel2017graph}.

\subsection{Dehn twist}

For any $\alpha\in V(S)$, the \emph{Dehn twist} along $\alpha$ is
the mapping $T_{\alpha}:V(S)\rightarrow V(S)$ that sends each $\beta\in V(S)$
to the simple closed curve $T_{\alpha}(\beta)$ obtained by the procedure
illustrated in \prettyref{fig:Dehn-twist}: near each intersection point
with $\alpha$, the curve $\beta$ is twisted to the left, following
a full loop around $\alpha$, and then continues along its original
path. If $i(\alpha,\beta)=0$, we have $T_{\alpha}(\beta)=\beta$.

\begin{figure}[H]
	\noindent \begin{centering}
		\includegraphics{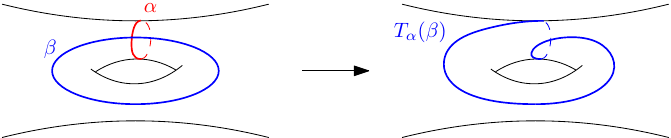}
		\par\end{centering}
	\caption{\label{fig:Dehn-twist}}
\end{figure}

Note that $T_{\alpha}$ is an automorphism of $C(S)$ and is induced
by a self-homeomorphism of $S$.

\prettyref{fig:Dehn-twist2} illustrates the case where $i(\alpha,\beta)=2$,
with the \textcolor{red}{red} and \textcolor{blue}{blue} curves representing
two segments of $\beta$ in a neighborhood of $\alpha$.

\begin{figure}[H]
	\noindent \begin{centering}
		\includegraphics{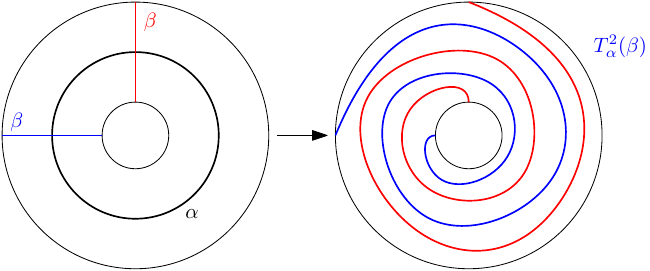}
		\par\end{centering}
	\caption{\label{fig:Dehn-twist2}}
\end{figure}

For more details we refer to Chapter 3 of \cite{farb2012primer}.

\subsection{Teichm\"{u}ller space, hyperbolic length and extremal length}

The Teichm\"{u}ller space $\mathcal{T}(S)$ consists of all isotopy
classes of hyperbolic metrics on $S$.

More specifically, by a Riemannian metric on $S$ we mean a differential
structure together with a Riemannian metric. We say that a Riemannian
metric on $S$ is a \emph{hyperbolic metric} if it has constant curvature
$-1$. For any hyperbolic metrics $\rho_{1},\rho_{2}$ on $S$, we
say that $\rho_{1}$ and $\rho_{2}$ are \emph{isotopic} (written
$\rho_{1}\sim\rho_{2}$) if there exists an isometry $\varphi:(S,\rho_{1})\rightarrow(S,\rho_{2})$
isotopic to the identity. Let $\mathcal{H}(S)$ be the set of hyperbolic
metrics on $S$. We call $\mathcal{H}(S)/\sim$ the \emph{Teichm\"{u}ller
space} of $S$ and denote it by $\mathcal{T}(S)$.

For more details on the Teichm\"{u}ller space we refer to \cite{imayoshi1992introduction}.

Let $\rho$ be a Riemannian metric on $S$, possibly degenerate at
a finite number of points. Let us denote by $A_{\rho}$ the area of
$(S,\rho)$. For any path $\alpha$ on $S$, we write $\ell_{\rho}(\alpha)$
for the integral of the arc length element of $\rho$ along $\alpha$.
For any $[\alpha]\in V(S)$, we call
\[
\inf\{\ell_{\rho}(\alpha'):\alpha'\in[\alpha]\}
\]
the $\rho$-\emph{length} of $[\alpha]$ and denote it by $L_{\rho}([\alpha])$.

Suppose that $X=[\rho]\in\mathcal{T}(S)$ and $[\alpha]\in V(S)$.

The \emph{hyperbolic length} of $[\alpha]$ on $X$, denoted by $L_{X}([\alpha])$,
is defined to be $L_{\rho}([\alpha])$. This definition does not depend
on the choice of $\rho$, since $[\alpha]$ is an isotopy class of
curves. We call $L_{X}$ the \emph{hyperbolic length function} of
$X$.

\begin{Theorem}[$9g-9$ theorem]
There exist $9g-9$ vertices $\alpha_{1},\cdots,\alpha_{9g-9}\in V(S)$
such that the mapping
\[
\Lambda:\mathcal{T}(S)\rightarrow\mathbb{R}_{+}^{9g-9},\quad X\mapsto\left(L_{X}(\alpha_{1}),\cdots,L_{X}(\alpha_{9g-9})\right)
\]
is injective.
\end{Theorem}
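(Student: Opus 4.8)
The plan is to reduce the statement to Fenchel--Nielsen coordinates and then, for each coordinate, exhibit a short list of curves whose lengths detect it.

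First I would fix a pants decomposition $\alpha_1,\dots,\alpha_{3g-3}$, i.e.\ a maximal system of disjoint curves, cutting $S$ into $2g-2$ pairs of pants. Classically, the Fenchel--Nielsen map
$$
\mathcal{T}(S)\longrightarrow\mathbb{R}_{+}^{3g-3}\times\mathbb{R}^{3g-3},\qquad
X\longmapsto\bigl(L_X(\alpha_1),\dots,L_X(\alpha_{3g-3}),\,\tau_1(X),\dots,\tau_{3g-3}(X)\bigr)
$$
is a bijection, where $\tau_i$ is the twist parameter along $\alpha_i$, normalized so that the Dehn twist $T_{\alpha_i}$ acts on $\mathcal{T}(S)$ by adding $L_X(\alpha_i)$ to $\tau_i$ and fixing every other coordinate. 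The first $3g-3$ entries of $\Lambda(X)$ already recover the length coordinates $L_X(\alpha_i)$, so it remains only to recover each twist $\tau_i(X)$ from finitely many lengths.

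For each $i$ I would choose a curve $\beta_i$ lying in the union $N_i$ of the one or two pairs of pants adjacent to $\alpha_i$, crossing $\alpha_i$ once when $N_i$ is a one-holed torus and twice when $N_i$ is a four-holed sphere; thus $i(\alpha_i,\beta_i)>0$ while $i(\alpha_j,\beta_i)=0$ for $j\ne i$. Set $\gamma_i\coloneqq T_{\alpha_i}\beta_i$; since $i(\alpha_i,\beta_i)>0$ we have $\gamma_i\ne\beta_i$, and since each of $\beta_i,\gamma_i$ meets only $\alpha_i$ among the pants curves, one checks that
$$
\alpha_1,\dots,\alpha_{3g-3},\ \beta_1,\dots,\beta_{3g-3},\ \gamma_1,\dots,\gamma_{3g-3}
$$
are $9g-9$ distinct vertices of $C(S)$. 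The crucial input is a twist-convexity lemma: with all Fenchel--Nielsen coordinates fixed except $\tau_i$, the hyperbolic structure on $N_i$ --- and hence $L_X(\beta_i)$ --- depends only on $L_X(\alpha_i)$, the lengths of $\partial N_i$ (all among the $L_X(\alpha_j)$), and $\tau_i$, and moreover $\tau_i\mapsto L_X(\beta_i)$ is a strictly convex, proper function. This can be proved by developing $N_i$ into $\mathbb{H}^2$ and expressing the length of $\beta_i$ through the hyperbolic law of cosines, or cited from Kerckhoff's theorem on strict convexity of geodesic length along earthquake paths. Strict convexity and properness then force the value $L_X(\beta_i)$ to determine $|\tau_i-c_i|$, where $c_i$ is the unique minimizer (a function of the already-recovered coordinates).

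Finally, $L_X(\gamma_i)=L_X(T_{\alpha_i}\beta_i)=L_{T_{\alpha_i}^{-1}\cdot X}(\beta_i)$, and $T_{\alpha_i}^{-1}$ subtracts $L_X(\alpha_i)$ from $\tau_i$ while fixing the other coordinates; hence, with the other coordinates held fixed, $\tau_i\mapsto L_X(\gamma_i)$ is the same convex profile translated rigidly by $L_X(\alpha_i)$, so its value determines $|\tau_i-c_i-L_X(\alpha_i)|$. Since $L_X(\alpha_i)>0$, the centers $c_i$ and $c_i+L_X(\alpha_i)$ are distinct, so the two distances $|\tau_i-c_i|$ and $|\tau_i-c_i-L_X(\alpha_i)|$ determine $\tau_i$ uniquely. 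Doing this for every $i$ recovers the whole twist vector, hence the full Fenchel--Nielsen coordinate tuple of $X$, hence $X$ itself; therefore $\Lambda$ is injective. The hard part will be the twist-convexity lemma --- establishing the trigonometric identities for a curve crossing $\alpha_i$ once or twice, treating the one-holed-torus and four-holed-sphere cases uniformly, and verifying that composing with $T_{\alpha_i}$ shifts the convex profile by a nonzero amount; the rest is routine bookkeeping with Fenchel--Nielsen coordinates.
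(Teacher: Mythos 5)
The paper does not prove this theorem; it only states it and cites Proposition 7.11 of Fathi--Laudenbach--Po\'enaru, so there is no in-paper proof to compare against. Your proposal is the standard Fenchel--Nielsen proof in the same spirit as that reference: the choice of $9g-9$ curves (a pants decomposition, plus $\beta_i$ crossing only $\alpha_i$, plus $\gamma_i=T_{\alpha_i}\beta_i$), Kerckhoff's strict convexity of length along twist paths, and the observation that the twist-profile of $\gamma_i$ is a nonzero translate of that of $\beta_i$, are exactly the right ingredients.

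There is, however, one genuine slip. You assert that strict convexity and properness of $\tau_i\mapsto L_X(\beta_i)$ ``force the value $L_X(\beta_i)$ to determine $|\tau_i-c_i|$,'' where $c_i$ is the minimizer, and then compare two such distances. That implication is false in general: a strictly convex proper function on $\mathbb{R}$ need not be even about its minimizer, so its value determines a two-element level set, not the distance to $c_i$. Two clean repairs: \emph{(a)} supply the missing symmetry --- with the seam-matching normalization of the twist, the reflections of the one or two pairs of pants in $N_i$ glue to an orientation-reversing isometry of the twist-$0$ structure fixing $\alpha_i$ and $\beta_i$, so $\tau_i\mapsto L_X(\beta_i)$ really is even about $c_i$ and your sentence becomes correct; or \emph{(b)} drop the symmetry claim and argue via level sets: for strictly convex proper $f$ with minimizer $c$, the level set $\{f=a\}$ with $a>\min f$ is $\{u_1(a),u_2(a)\}$ and the spread $u_2(a)-u_1(a)$ is strictly increasing in $a$; if $\tau\neq\tau'$ both matched $f$-value $a$ at themselves and $f$-value $b$ at their $-\ell$ translates, then the two level sets would have equal spread, forcing $a=b$, and then $\{u_1-\ell,u_2-\ell\}=\{u_1,u_2\}$ is impossible for $\ell>0$. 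Either fix closes the gap, and with it your proof is complete and matches the standard argument.
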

For more details about the $9g-9$ theorem we refer to Proposition
7.11 of \cite{fathi2012thurston}.

For any $X,Y\in\mathcal{T}(S)$, the \emph{Thurston distance} from
$X$ to $Y$ is defined to be
\begin{equation}
d_{\mathrm{Th}}(X,Y)\coloneqq\ln\sup_{\alpha\in V(S)}\frac{L_{Y}(\alpha)}{L_{X}(\alpha)}.\label{eq:Thurston-distance}
\end{equation}
In \cite{thurston1998minimal}, Thurston showed that $d_{\mathrm{Th}}$
is an asymmetric metric.

For any smooth and positive function $\lambda$ on $S$, the metric
$\lambda\rho$ defined by $(\lambda\rho)(x)=\lambda(x)\cdot\rho(x)$
is conformally equivalent to $\rho$. The \emph{extremal length} of
$[\alpha]$ on $X$, denoted by $\mathrm{Ext}_{X}([\alpha])$, is
defined to be
\[
\sup_{\lambda}\frac{L_{\lambda\rho}^{2}([\alpha])}{A_{\lambda\rho}},
\]
where the supremum is over all smooth and positive functions $\lambda$
on $S$. This definition does not depend on the choice of $\rho$,
since $[\alpha]$ is an isotopy class of curves. We call $\mathrm{Ext}_{X}$
the \emph{extremal length function} of $X$.

In \cite[Theorem 4]{kerckhoff1980asymptotic} Kerckhoff showed:
\begin{Theorem}\label{Theorem:Teich-distance}
The Teichm\"{u}ller distance between $X,Y\in\mathcal{T}(S)$ is equal
to
\[
\frac{1}{2}\ln\sup_{\alpha\in V(S)}\frac{\mathrm{Ext}_{Y}(\alpha)}{\mathrm{Ext}_{X}(\alpha)}.
\]
\end{Theorem}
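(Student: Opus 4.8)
The statement is Kerckhoff's formula for the Teichm\"uller distance, and the plan is to establish separately the two inequalities relating $d_{\mathrm{Te}}(X,Y)$ --- the Teichm\"uller distance, i.e.\ one half the logarithm of the infimum of the maximal dilatations of quasiconformal homeomorphisms $(S,X)\to(S,Y)$ respecting the markings --- and the quantity $\frac12\ln\sup_{\alpha\in V(S)}\mathrm{Ext}_Y(\alpha)/\mathrm{Ext}_X(\alpha)$.

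For the inequality $\frac12\ln\sup_{\alpha}\mathrm{Ext}_Y(\alpha)/\mathrm{Ext}_X(\alpha)\le d_{\mathrm{Te}}(X,Y)$, I would invoke the standard distortion property of extremal length: if $f\colon X\to Y$ is $K$-quasiconformal, then for every curve family $\Gamma$ on $X$ one has $\mathrm{Ext}_Y(f\Gamma)\le K\,\mathrm{Ext}_X(\Gamma)$. Applying this to the family of curves freely homotopic to a given $\alpha\in V(S)$ --- whose extremal length is $\mathrm{Ext}_X(\alpha)$ in the sense defined above --- yields $\mathrm{Ext}_Y(\alpha)\le K\,\mathrm{Ext}_X(\alpha)$ for every $\alpha$. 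Choosing $f$ to be the Teichm\"uller map from $X$ to $Y$, which by Teichm\"uller's existence theorem realizes the minimal dilatation $K=e^{2d_{\mathrm{Te}}(X,Y)}$, gives $\sup_{\alpha}\mathrm{Ext}_Y(\alpha)/\mathrm{Ext}_X(\alpha)\le e^{2d_{\mathrm{Te}}(X,Y)}$, which is the desired inequality.

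For the reverse inequality I would exhibit curves along which the ratio tends to $e^{2d_{\mathrm{Te}}(X,Y)}$. Set $T=d_{\mathrm{Te}}(X,Y)$, let $(X_t)_{t\in[0,T]}$ be the Teichm\"uller geodesic from $X_0=X$ to $X_T=Y$ determined by an initial holomorphic quadratic differential $q$ on $X$, and let $F$ be the measured foliation of $q$ that is expanded along this geodesic (the horizontal one, in the usual convention). By the Hubbard--Masur theorem, $q$ is the unique holomorphic quadratic differential on $X$ with horizontal foliation $F$, and by the Gardiner--Masur description of the extremal length of measured foliations, $\mathrm{Ext}_X(F)=\|q\|$, the total area of $q$. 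Writing the Teichm\"uller deformation in natural coordinates of $q$ --- horizontal direction stretched by $e^t$, vertical contracted by $e^{-t}$, total area preserved --- one sees that $F$ equals $e^t$ times the horizontal foliation of the time-$t$ differential $q_t$; since the horizontal foliation of $q_t$ has extremal length $\|q_t\|=\|q\|$ on $X_t$ and extremal length is homogeneous of degree two, this gives $\mathrm{Ext}_{X_t}(F)=e^{2t}\|q\|$, and in particular $\mathrm{Ext}_Y(F)/\mathrm{Ext}_X(F)=e^{2T}$. Since weighted simple closed curves are dense in the space $\mathcal{MF}(S)$ of measured foliations and extremal length extends to a continuous function on $\mathcal{MF}(S)$ that is homogeneous of degree two, one may choose $\alpha_n\in V(S)$ and $c_n>0$ with $c_n\alpha_n\to F$ in $\mathcal{MF}(S)$; then $\mathrm{Ext}_Y(\alpha_n)/\mathrm{Ext}_X(\alpha_n)=\mathrm{Ext}_Y(c_n\alpha_n)/\mathrm{Ext}_X(c_n\alpha_n)\to e^{2T}$, so $\sup_{\alpha}\mathrm{Ext}_Y(\alpha)/\mathrm{Ext}_X(\alpha)\ge e^{2T}$, i.e.\ $\frac12\ln\sup_{\alpha}\mathrm{Ext}_Y(\alpha)/\mathrm{Ext}_X(\alpha)\ge d_{\mathrm{Te}}(X,Y)$. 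Combining the two inequalities gives the theorem.

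The analytic heart of the argument, and the main obstacle, is the identity $\mathrm{Ext}_X(F)=\|q\|$ together with its behaviour under the Teichm\"uller stretch: this rests on the Hubbard--Masur existence and uniqueness theorem for quadratic differentials with prescribed horizontal foliation and on the Gardiner--Masur identification of the extremal length of a foliation with the mass of the associated differential, and it requires tracking the transverse measure of $F$ and the flat area carefully through the stretch map. A secondary point is the continuity (and degree-two homogeneity) of extremal length on $\mathcal{MF}(S)$ used in the limiting step; this is itself a theorem --- established, in fact, in the same source as the cited result --- but it can be bypassed by approximating $F$ directly by simple closed curves whose $q$-geodesic representatives are nearly horizontal and estimating their extremal lengths on $X$ and on $Y$ by explicit length--area arguments.
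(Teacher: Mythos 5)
The paper does not prove this statement at all: it is quoted verbatim as Theorem 4 of Kerckhoff's paper \emph{The asymptotic geometry of Teichm\"uller space} and used as a black box. Your sketch is, in substance, Kerckhoff's original argument, and it is correct in outline: the upper bound $\sup_\alpha \mathrm{Ext}_Y(\alpha)/\mathrm{Ext}_X(\alpha)\leqslant e^{2d}$ follows from the quasi-invariance of extremal length under a $K$-quasiconformal map together with Teichm\"uller's existence theorem, and the lower bound follows from the computation $\mathrm{Ext}_{X_t}(F)=e^{2t}\lVert q\rVert$ for the horizontal foliation $F$ of the initial differential along the Teichm\"uller geodesic, combined with the density of weighted simple closed curves in $\mathcal{MF}$ and the continuity and degree-two homogeneity of extremal length there (the latter is exactly Kerckhoff's Proposition 3, which this paper also quotes). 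The only caveats are that the genuinely hard ingredients --- the Hubbard--Masur theorem, the identity $\mathrm{Ext}_X(F)=\lVert q_F\rVert$ for the continuous extension, and the continuity of $\mathrm{Ext}$ on $\mathcal{MF}$ --- are invoked rather than proved, which is reasonable here, and that one must fix conventions consistently (horizontal versus vertical foliation, $K=e^{2d}$ versus $e^{d}$) so that the stretched foliation really is the one whose extremal length grows; as written your conventions are consistent.
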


It is well known that the hyperbolic length embedding $\mathrm{Hyp}:\mathcal{T}(S)\rightarrow\mathbb{R}^{V(S)}$
and the extremal length embedding $\mathrm{Ext}:\mathcal{T}(S)\rightarrow\mathbb{R}^{V(S)}$
have disjoint images in $\mathbb{R}^{V(S)}$. Similarly, the embeddings
$\mathrm{Hyp}$ and $\sqrt{\mathrm{Ext}}$ have disjoint images
even after composing with the projection
from $\mathbb{R}^{V(S)}$ to $P(\mathbb{R}^{V(S)})$:
\begin{Lemma}
\label{Lemma:hyp-neq-ext}Let $X,Y\in\mathcal{T}(S)$. Then $L_{X}\neq k\cdot\sqrt{\mathrm{Ext}_{Y}}$
for any $k>0$.
\end{Lemma}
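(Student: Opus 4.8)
The plan is to argue by contradiction, exploiting the two length‑space formulas just recalled: the Thurston distance formula \eqref{eq:Thurston-distance} expresses an \emph{asymmetric} metric in terms of hyperbolic lengths, while Kerckhoff's formula (\prettyref{Theorem:Teich-distance}) expresses the \emph{symmetric} Teichm\"{u}ller metric in terms of extremal lengths. A proportionality $L_{X}=k\sqrt{\mathrm{Ext}_{Y}}$ would transport the second formula onto the first and so force the Thurston metric to be symmetric along a mapping‑class‑group orbit — which it is not.

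In detail, suppose $L_{X}=k\cdot\sqrt{\mathrm{Ext}_{Y}}$ for some $X,Y\in\mathcal{T}(S)$ and $k>0$. Both $\mathrm{Hyp}$ and $\mathrm{Ext}$ are equivariant for the action of $\mathrm{Mod}^{\pm}(S)$ on $\mathcal{T}(S)$ and on $\mathbb{R}^{V(S)}$, since the ($\rho$‑ or extremal) length of a curve is a topological invariant of the pair; hence $L_{\psi X}=k\sqrt{\mathrm{Ext}_{\psi Y}}$ for every $\psi\in\mathrm{Mod}(S)$, because $L_{\psi X}(\alpha)=L_{X}(\psi^{-1}\alpha)=k\sqrt{\mathrm{Ext}_{Y}(\psi^{-1}\alpha)}=k\sqrt{\mathrm{Ext}_{\psi Y}(\alpha)}$. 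Substituting this into the two distance formulas gives, for all $\psi,\psi'\in\mathrm{Mod}(S)$,
\[
d_{\mathrm{Th}}(\psi X,\psi'X)=\ln\sup_{\alpha\in V(S)}\frac{L_{\psi'X}(\alpha)}{L_{\psi X}(\alpha)}=\ln\sup_{\alpha\in V(S)}\sqrt{\frac{\mathrm{Ext}_{\psi'Y}(\alpha)}{\mathrm{Ext}_{\psi Y}(\alpha)}}=d_{\mathcal{T}}(\psi Y,\psi'Y),
\]
where $d_{\mathcal{T}}$ denotes the Teichm\"{u}ller distance. Since $d_{\mathcal{T}}$ is a genuine metric, the right‑hand side is symmetric in $\psi,\psi'$, so $d_{\mathrm{Th}}$ is symmetric on the orbit $\mathrm{Mod}(S)\cdot X$; and because each element of $\mathrm{Mod}(S)$ acts isometrically for $d_{\mathrm{Th}}$, this is equivalent to $d_{\mathrm{Th}}(X,\psi X)=d_{\mathrm{Th}}(X,\psi^{-1}X)$ for every $\psi\in\mathrm{Mod}(S)$.

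It then remains to contradict this last identity, i.e.\ to exhibit a single mapping class $\psi$ with $d_{\mathrm{Th}}(X,\psi X)\neq d_{\mathrm{Th}}(X,\psi^{-1}X)$ (equivalently $d_{\mathrm{Th}}(X,\psi X)\neq d_{\mathrm{Th}}(\psi X,X)$). Here one uses the asymmetric lower bounds for $d_{\mathrm{Th}}$ — evaluating $\sup_{\alpha}L_{B}(\alpha)/L_{A}(\alpha)$ on well‑chosen curves, e.g.\ curves made short along a Thurston stretch direction, or high powers of a pseudo‑Anosov — to produce, from an \emph{arbitrary} starting surface $X$, a mapping class along whose orbit the forward and backward Thurston distances disagree. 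Establishing this genuine asymmetry of $d_{\mathrm{Th}}$ along a single $\mathrm{Mod}(S)$–orbit, for every $X$, is the one non‑formal step and the main obstacle; everything preceding it is bookkeeping with the two distance formulas. (An alternative finish bypasses the Thurston metric: by the Gardiner–Masur identity the function $\sqrt{\mathrm{Ext}_{Y}}$ is self‑dual for the geometric intersection pairing, so $L_{X}=k\sqrt{\mathrm{Ext}_{Y}}$ would force $\sup_{\nu}i(\mu,\nu)/L_{X}(\nu)=k^{-2}L_{X}(\mu)$ for all $\mu$; one contradicts this by comparing both sides along the families $T_{\alpha}^{\pm n}\beta$, where the second‑order‑in‑$n$ asymptotics of hyperbolic length — controlled by the collar lemma and by traces of holonomies — differ in an essential way from those of $\sqrt{\mathrm{Ext}_{Y}}$.)
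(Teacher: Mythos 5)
Your main route is genuinely different from the paper's, and it has an elegant core: equivariance of $\mathrm{Hyp}$ and $\mathrm{Ext}$ under $\mathrm{Mod}(S)$ transports $L_X=k\sqrt{\mathrm{Ext}_Y}$ along the whole orbit, whence $d_{\mathrm{Th}}(\psi X,\psi'X)=d_{\mathcal{T}}(\psi Y,\psi'Y)$, forcing $d_{\mathrm{Th}}$ to be symmetric on $\mathrm{Mod}(S)\cdot X$. That much is correct bookkeeping. But the step you yourself flag as ``the main obstacle'' is a real gap, not a formality: you need, for \emph{arbitrary} $X$, a single $\phi\in\mathrm{Mod}(S)$ with $d_{\mathrm{Th}}(X,\phi X)\neq d_{\mathrm{Th}}(\phi X,X)$. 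This is not simply ``Thurston's metric is asymmetric'': the known asymmetry is between generic pairs of points, not necessarily between points in a common orbit. Worse, the most natural candidates do not obviously work --- for a pseudo-Anosov $\psi$, both $d_{\mathrm{Th}}(X,\psi^n X)$ and $d_{\mathrm{Th}}(\psi^n X,X)$ have the same linear growth rate $\ln\lambda$, and for powers of a Dehn twist the forward and backward distances both grow logarithmically --- so establishing strict inequality at finite $n$ needs a careful estimate. Your parenthetical alternative via the Gardiner--Masur duality and second-order twist asymptotics is an idea, not an argument; as written neither finish closes the proof.

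The paper takes a completely different and more local route: it reduces to the general claim that no function of the form $i(C,\cdot)$ for a geodesic current $C$ (which, by Bonahon, subsumes $kL_X$) can equal $\sqrt{\mathrm{Ext}_Y}$, then exploits that $i(C,\cdot)$ is \emph{additive} on disjoint curves while $\sqrt{\mathrm{Ext}_Y}$ is not: additivity at $F=\alpha+\beta$ forces the Jenkins--Strebel metric $|q_F|$ to be simultaneously extremal for $\alpha$ and $\beta$, contradicting uniqueness of the extremal (Strebel) metric. This is a self-contained argument using only the quadratic-differential machinery already quoted in the preliminaries, with no unproved input. If you want to salvage your approach, the cleanest fix is probably to abandon the global metric comparison and instead use the additivity dichotomy directly, which would essentially reproduce the paper's proof.
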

\begin{proof}
For the convenience of the reader, we provide a proof here.

It suffices to prove a more general result: if $C$ is a geodesic
current on $S$, and $L:V(S)\rightarrow\mathbb{R}$ is defined by
$L(\cdot)=i(C,\cdot)$, where $i$ is the geometric intersection number,
then $L\neq\sqrt{\mathrm{Ext}_{Y}}$.

Suppose on the contrary that there is a geodesic current $C$ such
that $L=\sqrt{\mathrm{Ext}_{Y}}$. Since $\sqrt{\mathrm{Ext}_{Y}}$
is uniquely extended to a continuous and homogeneous function on $\mathcal{MF}$,
we have $L(F)=\sqrt{\mathrm{Ext}_{Y}(F)}$ for every $F\in\mathcal{MF}$.

Let $\alpha,\beta\in V(S)$ such that $i(\alpha,\beta)=0$. We set
$F=\alpha+\beta\in\mathcal{MF}$. Because the geometric intersection
number is bilinear, we have 
\[
L(\alpha+\beta)=i(C,\alpha+\beta)=i(C,\alpha)+i(C,\beta)=L(\alpha)+L(\beta).
\]
Hence
\[
\sqrt{\mathrm{Ext}_{Y}(F)}=L(F)=L(\alpha)+L(\beta)=\sqrt{\mathrm{Ext}_{Y}(\alpha)}+\sqrt{\mathrm{Ext}_{Y}(\beta)}.
\]
According to Theorem 3.12 and Proposition 3.13 of \cite{kahn2022conformal}, there is a
Jenkins-Strebel differential $q_{F}$ such that
\[
\mathrm{Ext}_{Y}(F)=A_{|q_{F}|}=\frac{\left(L_{|q_{F}|}(\alpha)+L_{|q_{F}|}(\beta)\right)^{2}}{A_{|q_{F}|}}.
\]
where $|q_{F}|$ is the flat metric induced by $q_{F}$. Thus
\begin{equation}
\sqrt{\mathrm{Ext}_{Y}(\alpha)}+\sqrt{\mathrm{Ext}_{Y}(\beta)}=\sqrt{\mathrm{Ext}_{Y}(F)}=\frac{L_{|q_{F}|}(\alpha)+L_{|q_{F}|}(\beta)}{\sqrt{A_{|q_{F}|}}}.\label{eq:hyp-neq-ext-1}
\end{equation}

According to the definition of extremal length, we have
\[
\sqrt{\mathrm{Ext}_{Y}(\alpha)}\geqslant\frac{L_{|q_{F}|}(\alpha)}{\sqrt{A_{|q_{F}|}}},\quad\sqrt{\mathrm{Ext}_{Y}(\beta)}\geqslant\frac{L_{|q_{F}|}(\beta)}{\sqrt{A_{|q_{F}|}}}.
\]
Combining these with \prettyref{eq:hyp-neq-ext-1} we get
\[
\sqrt{\mathrm{Ext}_{Y}(\alpha)}=\frac{L_{|q_{F}|}(\alpha)}{\sqrt{A_{|q_{F}|}}},\quad\sqrt{\mathrm{Ext}_{Y}(\beta)}=\frac{L_{|q_{F}|}(\beta)}{\sqrt{A_{|q_{F}|}}}.
\]
Thus $|q_{F}|$ is the extremal metric for both $\alpha$ and $\beta$
on $Y$.

However, the extremal metric for $\alpha$ on $Y$ is $|q_{\alpha}|$
and is unique up to scaling (see Theorem 2.2 of \cite{jenkins2012univalent}), where $q_{\alpha}$ is the Jenkins-Strebel
differential corresponding to $\alpha$ in Theorem 3.12 of \cite{kahn2022conformal}. From
the construction of $q_{F}$ and $q_{\alpha}$, it follows that 
$|q_{F}|\neq k\cdot|q_{\alpha}|$ for any $k$. Hence $|q_{F}|$
is not the extremal metric for $\alpha$ on $Y$, a contradiction.
\end{proof}

\subsection{Examples of Dehn quasi-homothetic function}\label{sub:examples}

\subsubsection{Metric of non-positive curvature}

In \cite[Lemma 2.2]{slegers2022energy} Slegers actually proved:
\begin{Lemma}
Let $\rho$ be a Riemannian metric of non-positive curvature on $S$,
possibly degenerate at a finite number of points. Then for any $\alpha,\beta\in V(S)$,
there exists a constant $C=C(\rho,\alpha,\beta)>0$, such that for
any $k\in\mathbb{N}$ we have
\[
\left|L_{\rho}(T_{\alpha}^{k}\beta)-k\cdot i(\alpha,\beta)\cdot L_{\rho}(\alpha)\right|\leqslant C.
\]
\end{Lemma}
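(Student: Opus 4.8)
I would prove the two one-sided estimates
\[
k\, i(\alpha,\beta)\, L_\rho(\alpha) - C \;\leqslant\; L_\rho(T_\alpha^{k}\beta) \;\leqslant\; k\, i(\alpha,\beta)\, L_\rho(\alpha) + C
\]
separately. If $i(\alpha,\beta)=0$ then $T_\alpha^{k}\beta=\beta$ and there is nothing to prove, so set $n:=i(\alpha,\beta)\geqslant 1$. Fix the $\rho$-geodesic representatives $a$ of $\alpha$, $b$ of $\beta$, and $c_k$ of $T_\alpha^{k}\beta$, each realizing the corresponding $\rho$-length. The crucial consequence of non-positive curvature is that the universal cover $\tilde S$ is $\mathrm{CAT}(0)$ --- this uses that the finitely many degeneracy points carry only non-positively concentrated curvature, i.e.\ cone angles $\geqslant 2\pi$ --- so geodesics are unique; in particular $a$ is simple and $c_k$ meets $a$ transversally in exactly $i(T_\alpha^{k}\beta,\alpha)=i(\beta,\alpha)=n$ points.

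For the upper bound I would write down an explicit competitor for $T_\alpha^{k}\beta$: starting from $b$, insert at each of its $n$ crossings with $a$ a detour that runs $k$ times along $a$ and then crosses over. The resulting closed curve is freely homotopic to $T_\alpha^{k}\beta$ and has $\rho$-length at most $\ell_\rho(b)+n\bigl(k\,\ell_\rho(a)+D\bigr)$, where $D$ bounds the length of the $n$ short transition arcs joining $b$ to $a$. Hence $L_\rho(T_\alpha^{k}\beta)\leqslant k\, i(\alpha,\beta)\, L_\rho(\alpha)+C_{+}$ with $C_{+}:=\ell_\rho(\beta)+nD$; this direction uses no curvature hypothesis.

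The lower bound is the substantive part. Cut $S$ along $a$ to obtain a compact non-positively curved surface $S'$ with geodesic boundary; the $n$ points of $a\cap c_k$ cut $c_k$ into $n$ geodesic arcs $c_k^{(1)},\dots,c_k^{(n)}$ of $S'$ with endpoints on $\partial S'$, and $L_\rho(T_\alpha^{k}\beta)=\sum_{j=1}^{n}\ell_\rho(c_k^{(j)})$. Since every power of $T_\alpha$ is supported in a fixed annular neighbourhood of $a$, the relative homotopy class of each $c_k^{(j)}$ is obtained from a fixed ``bulk'' sub-arc of $\beta$, independent of $k$, by wrapping $k$ times around a boundary geodesic of $S'$ at (at least) one of its endpoints. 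I would then prove the key estimate: \emph{a $\rho$-geodesic arc in $S'$ whose relative class is a fixed arc followed by $k$ wraps around a boundary geodesic of length $\ell=\ell_\rho(a)$ has $\rho$-length at least $k\ell-C'$}, with $C'$ independent of $k$ and of the (compactly varying) endpoints. To see this, work in the universal cover $\tilde S'$, which is a convex subset of $\tilde S$ and hence $\mathrm{CAT}(0)$; lift the boundary geodesic to a line $\tilde\partial$ preserved by a translation $\tau$ of length $\ell$; trace the arc through the lifted homotopy so that its near endpoint lies on $\tilde\partial$ and its far endpoint becomes $\tau^{k}$ applied to a point at uniformly bounded distance from $\tilde\partial$; and invoke that the nearest-point projection $\pi$ onto $\tilde\partial$ is $1$-Lipschitz and commutes with $\tau$, which yields
\[
\ell_\rho(\text{arc})=d_{\tilde S'}(\tilde u,\tau^{k}\tilde v)\geqslant d\bigl(\pi\tilde u,\ \tau^{k}\pi\tilde v\bigr)\geqslant k\ell - d\bigl(\pi\tilde u,\ \pi\tilde v\bigr)\geqslant k\ell - C'.
\]
Summing over $j$ gives $L_\rho(T_\alpha^{k}\beta)\geqslant nk\ell - nC' = k\, i(\alpha,\beta)\, L_\rho(\alpha) - C_{-}$, and one concludes with $C:=\max(C_{+},C_{-})$.

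The main obstacle is precisely this last estimate --- ruling out that the geodesic representative ``takes a shortcut'' and fails to wind. It is here that non-positive curvature is indispensable: uniqueness of geodesics is needed both so that intersection numbers are realized geometrically and so that the cut arcs inherit the correct relative classes, while the $1$-Lipschitz (and $\tau$-equivariant) projection onto a complete boundary geodesic is what converts ``$k$ wraps'' into the linear lower bound $k\ell-C'$. A secondary technical point, easy to overlook, is to check at the outset that the degenerate metric genuinely is locally $\mathrm{CAT}(0)$, so that everything above applies.
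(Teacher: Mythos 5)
The paper itself gives no proof of this lemma --- it is quoted directly from \cite[Lemma 2.2]{slegers2022energy} --- so there is no in-paper argument to compare with. Your reconstruction is the natural one: the upper bound via an explicit competitor is routine and curvature-free, and the lower bound via cutting along the geodesic $a$, lifting to the $\mathrm{CAT}(0)$ universal cover, and using the $1$-Lipschitz $\tau$-equivariant projection onto a lift of $a$ is the right idea, and your remark that the non-positivity of the concentrated curvature at the singular points must be built into the hypothesis is well placed. Three details in the lower bound deserve more care than the sketch gives them. First, the assertion that each cut arc $c_k^{(j)}$ picks up exactly $k$ wraps is not obvious from the annulus picture (naively, one side of the supporting annulus absorbs the twist, so individual arcs could see $0$ or $2k$); it is cleanest to read the per-arc wrap count off the amalgamated-product/HNN decomposition of $\pi_1(S)$ over $\langle\alpha\rangle$, or else just argue that the \emph{total} wrap count is $nk$ and sum the per-arc bounds with their actual weights. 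Second, the step $d(\pi\tilde u,\pi\tilde v)\leqslant C'$ is not automatic: the far endpoint of the lifted arc depends on $k$, and what saves you is that it lies on $\tau^{k}\tilde\partial_0'$ for a \emph{fixed} lift $\tilde\partial_0'$ of $a$, whose nearest-point projection onto $\tilde\partial$ is a bounded set; the latter is false in a general $\mathrm{CAT}(0)$ space (two parallel lines bounding a flat strip have unbounded mutual projection), and here one must invoke that $\pi_1(S)$ contains no $\mathbb{Z}^2$, so distinct lifts of $a$ bound no flat strip and hence diverge. Third, $\tilde S'$ is a convex subset of $\tilde S$ only when $a$ separates; when $a$ is non-separating just apply Cartan--Hadamard to $S'$ directly. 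With these points spelled out the strategy goes through and does the job the paper delegates to the citation.
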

It follows that the $\rho$-length function $L_{\rho}:V(S)\rightarrow\mathbb{R}$
is a Dehn quasi-homothetic function of type 
\[
(k\mapsto k,(\alpha,\beta)\mapsto i(\alpha,\beta))
\]
 if $\rho$ is a hyperbolic or flat metric on $S$.

\subsubsection{Extremal length}

The closure of the image of $\mathbb{R}_{+}\times V(S)$ under
\[
i_{*}:\mathbb{R}_{+}\times V(S)\rightarrow\mathbb{R}^{V(S)},\quad(t,\alpha)\mapsto t\cdot i(\alpha,\cdot)
\]
is called the space of \emph{measured foliations} on $S$ and denoted
by $\mathcal{MF}$.

In \cite[Proposition 3]{kerckhoff1980asymptotic} Kerckhoff proved:
\begin{Proposition}
For any $X\in\mathcal{T}(S)$, there is a unique continuous extension
of the extremal length function $\mathrm{Ext}_{X}$ from $V(S)$ to
$\mathcal{MF}$ such that $\mathrm{Ext}_{X}(r\cdot F)=r^{2}\cdot\mathrm{Ext}_{X}(F)$
holds for any $r\in\mathbb{R}_{+}$ and $F\in\mathcal{MF}$.
\end{Proposition}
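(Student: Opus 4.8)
The plan is to settle uniqueness by a density argument and existence by realising each measured foliation as the horizontal foliation of a holomorphic quadratic differential on $X$.

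\emph{Uniqueness.} By definition $\mathcal{MF}$ is the closure in $\mathbb{R}^{V(S)}$ of the set of weighted curves $\{t\cdot i(\alpha,\cdot):t\in\mathbb{R}_{+},\,\alpha\in V(S)\}$, so this set is dense. If $E$ and $E'$ are continuous functions on $\mathcal{MF}$ that both restrict to $\mathrm{Ext}_{X}$ on $V(S)$ and satisfy $E(rF)=r^{2}E(F)$, then $E(t\alpha)=t^{2}\,\mathrm{Ext}_{X}(\alpha)=E'(t\alpha)$ for every weighted curve, hence $E=E'$ on a dense set and therefore $E=E'$. Thus uniqueness is immediate, and the whole content is to produce one such extension.

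\emph{Existence.} Fix $X=[\rho]$ and let $Q(X)$ be the finite-dimensional space of holomorphic quadratic differentials on $X$, with the $L^{1}$-norm $\|q\|=\int_{S}|q|$ equal to the area of the flat metric $|q|$. I would use two classical inputs. First, the Hubbard--Masur theorem: the map $\mathrm{HM}\colon Q(X)\to\mathcal{MF}$ sending $q$ to its horizontal measured foliation $F_{q}$ is a homeomorphism, and $\mathrm{HM}(t^{2}q)=t\cdot\mathrm{HM}(q)$ for $t>0$, since multiplying $q$ by $t^{2}$ multiplies the transverse measure of $F_{q}$ by $t$. Second, from Jenkins--Strebel theory (see \cite{jenkins2012univalent} and Theorem~3.12 and Proposition~3.13 of \cite{kahn2022conformal}): for a simple closed curve $\alpha$, the differential $q_{\alpha}:=\mathrm{HM}^{-1}(\alpha)$ is the Jenkins--Strebel differential whose horizontal foliation is the single-cylinder foliation with core $\alpha$ and total transverse measure one, and with the standard normalisations one has $\mathrm{Ext}_{X}(F_{q})=\|q\|$ for every $q\in Q(X)$; in particular $\mathrm{Ext}_{X}(\alpha)=\|q_{\alpha}\|$, the classical expression of extremal length through the modulus of the widest embedded annulus homotopic to $\alpha$. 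Granting these, define $E:=\|\cdot\|\circ\mathrm{HM}^{-1}\colon\mathcal{MF}\to\mathbb{R}_{\geqslant0}$. Then $E$ is continuous as a composition of continuous maps; it is homogeneous of degree $2$ because $\mathrm{HM}^{-1}(rF)=r^{2}\,\mathrm{HM}^{-1}(F)$ and $\|\cdot\|$ is positively homogeneous of degree $1$; and it restricts to $\mathrm{Ext}_{X}$ on $V(S)$. Hence $E$ is the required extension.

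\emph{The main obstacle.} The one genuinely non-formal point is the identity $\mathrm{Ext}_{X}(F_{q})=\|q\|$ and its compatibility with the elementary definition of $\mathrm{Ext}_{X}$ on simple closed curves. The inequality $\mathrm{Ext}_{X}(F_{q})\leqslant\|q\|$ is easy: feed the flat metric $|q|$ into the variational definition of extremal length. The reverse inequality needs an extremality certificate --- Beurling's criterion, or equivalently the uniqueness half of the Jenkins--Strebel existence theorem --- to verify that $|q|$ is optimal; for a single curve this is the classical modulus-of-the-maximal-annulus computation, already invoked in the proof of \prettyref{Lemma:hyp-neq-ext}. If one prefers to avoid Hubbard--Masur, an alternative is to establish on weighted curves the bilinear estimate $i(F,G)^{2}\leqslant\mathrm{Ext}_{X}(F)\,\mathrm{Ext}_{X}(G)$ --- again via $|q_{F}|$, using $i(F,G)\leqslant\ell_{|q_{F}|}(G)$ together with $\ell_{|q_{F}|}(G)^{2}\leqslant\|q_{F}\|\cdot\mathrm{Ext}_{X}(G)=\mathrm{Ext}_{X}(F)\,\mathrm{Ext}_{X}(G)$ --- deduce Gardiner's formula $\mathrm{Ext}_{X}(G)=\sup_{F}i(F,G)^{2}/\mathrm{Ext}_{X}(F)$, which already gives a lower semicontinuous, degree-$2$-homogeneous candidate on $\mathcal{MF}$, and then upgrade to continuity by a compactness argument in $Q(X)$; but this still rests on the same Jenkins--Strebel input, so the essential difficulty is the same.
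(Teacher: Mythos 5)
The paper does not prove this Proposition; it is cited verbatim as Kerckhoff's Proposition~3 of \cite{kerckhoff1980asymptotic}, so there is no in-text proof to compare against. Your argument is, in essence, Kerckhoff's own: use the Hubbard--Masur homeomorphism $\mathrm{HM}\colon Q(X)\to\mathcal{MF}$ to transport the $L^{1}$-norm to $\mathcal{MF}$, setting $E:=\|\cdot\|\circ\mathrm{HM}^{-1}$, then check degree-$2$ homogeneity from $\mathrm{HM}(t^{2}q)=t\,\mathrm{HM}(q)$, continuity from the fact that $\mathrm{HM}$ is a homeomorphism, and the restriction to $V(S)$ via the single-cylinder Jenkins--Strebel normalisation. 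The argument is sound, and the uniqueness-via-density step is exactly right. Two phrasing points worth tightening: your line ``one has $\mathrm{Ext}_{X}(F_{q})=\|q\|$ for every $q$'' is not an independently known fact but precisely the \emph{definition} of the extension being constructed, so it is mildly circular as written --- the genuine content is only the identity $\mathrm{Ext}_{X}(\alpha)=\|q_{\alpha}\|$ on simple closed curves, which you then verify correctly; and extremal length of the core curve of a flat cylinder of circumference $c$ and height $h$ is $c/h$, the \emph{reciprocal} of the conformal modulus $h/c$, so ``extremal length through the modulus'' should read ``through the reciprocal of the modulus.'' Neither affects the conclusion $\mathrm{Ext}_{X}(\alpha)=c=\|q_{\alpha}\|$ under your normalisation $h=1$. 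You might also note that the zero foliation (if included in $\mathcal{MF}$ as defined here, as the limit $t\to0^{+}$) should be handled separately by $E(0)=0$, since Hubbard--Masur is stated for nonzero differentials; this is immediate from continuity and homogeneity.
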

Consequently, for any $\alpha,\beta\in V(S)$ satisfying $i(\alpha,\beta)\neq0$,
it follows from
\[
\lim_{k\rightarrow+\infty}\frac{T_{\alpha}^{k}\beta}{k\cdot i(\alpha,\beta)}=\alpha
\]
that
\[
\sqrt{\mathrm{Ext}_{X}(\alpha)}=\lim_{k\rightarrow+\infty}\sqrt{\mathrm{Ext}_{X}\left(\frac{T_{\alpha}^{k}\beta}{k\cdot i(\alpha,\beta)}\right)}=\lim_{k\rightarrow+\infty}\frac{\sqrt{\mathrm{Ext}_{X}(T_{\alpha}^{k}\beta)}}{k\cdot i(\alpha,\beta)}.
\]
Therefore the square root of the extremal length function of $X\in\mathcal{T}(S)$
is a Dehn quasi-homothetic function of type 
\[
(k\mapsto k,(\alpha,\beta)\mapsto i(\alpha,\beta)).
\]

\subsubsection{Why the same type?}

The following example illustrates why we require that $L_{1}$ and
$L_{2}$ are Dehn quasi-homothetic functions of the same type:
\begin{Example}
Let $L_{1}$ be a Dehn quasi-homothetic function of type $(f,A_{1})$. We
assume further that for any $\alpha,\beta\in V(S)$ satisfying $i(\alpha,\beta)\neq0$,
there exists $c=c(\alpha,\beta)>0$ such that for any $k\in\mathbb{N}$
we have
\[
\left|L_{1}(T_{\alpha}^{k}\beta)-f(k)\cdot A_{1}(\alpha,\beta)\cdot L_{1}(\alpha)\right|\leqslant c.
\]
Note that the hyperbolic length function is an example of $L_{1}$.

Let $h:\mathbb{R}\rightarrow\mathbb{R}$ be a strictly increasing
bijection such that there exists $c_{h}>0$ satisfying $|h(x)-x|\leqslant c_{h}$
for any $x\in\mathbb{R}$. We set $L_{2}=h\circ L_{1}$. Then $L_{1}$
and $L_{2}$ induce the same oriented graph.

Let us set
\[
A_{2}:E^{c}(S)\rightarrow\mathbb{R},\quad(\alpha,\beta)\mapsto\frac{L_{1}(\alpha)}{L_{2}(\alpha)}\cdot A_{1}(\alpha,\beta).
\]
Then for any $\alpha,\beta\in V(S)$ satisfying $i(\alpha,\beta)\neq0$
and any $k\in\mathbb{N}$, we have
\begin{align*}
 & \phantom{\leqslant}\left|L_{2}(T_{\alpha}^{k}\beta)-f(k)\cdot A_{2}(\alpha,\beta)\cdot L_{2}(\alpha)\right|\\
 & \leqslant\left|(h\circ L_{1})(T_{\alpha}^{k}\beta)-L_{1}(T_{\alpha}^{k}\beta)+L_{1}(T_{\alpha}^{k}\beta)-f(k)\cdot A_{2}(\alpha,\beta)\cdot L_{2}(\alpha)\right|\\
 & \leqslant\left|(h\circ L_{1})(T_{\alpha}^{k}\beta)-L_{1}(T_{\alpha}^{k}\beta)\right|+\left|L_{1}(T_{\alpha}^{k}\beta)-f(k)\cdot\frac{L_{1}(\alpha)}{L_{2}(\alpha)}\cdot A_{1}(\alpha,\beta)\cdot L_{2}(\alpha)\right|\\
 & \leqslant c_{h}+c.
\end{align*}
It follows that $L_{2}$ is a Dehn quasi-homothetic function of type
$(f,A_{2})$, similar to the type of $L_{1}$.
\end{Example}
On the other hand, if $L$ is a Dehn quasi-homothetic function of
type $(f_{1},A)$, and $f_{2}:\mathbb{N}\rightarrow\mathbb{R}$ satisfies
\[
\lim_{n\rightarrow+\infty}\frac{f_{1}(n)}{f_{2}(n)}=1,
\]
then $L$ is also a Dehn quasi-homothetic function of type $(f_{2},A)$.
We do not know yet whether there exist
nontrivial Dehn quasi-homothetic functions $L_{i}$ of type
$(f_{i},A)$, where
\[
\lim_{n\rightarrow+\infty}\frac{f_{1}(n)}{f_{2}(n)}\neq1,
\]
such that $L_{1}$ and $L_{2}$ induce the same oriented graph.

\section{General theory}

\subsection{Dehn quasi-homothetic functions on infinite graphs}\label{subsec:Dehn quasi-homothetic}

In order to reveal the essence of \prettyref{Theorem:main1}, we will
establish a more general result on infinite graphs. Let us first fix
an infinite graph $G$, which can be viewed as a generalization of
the curve complex $C(S)$. We denote its vertex set and edge set by
$V$ and $E$ respectively. For any subset $W$ of $V$, let $G[W]$
denote the subgraph induced in $G$ by $W$.

We first propose a generalization of Dehn twists to infinite graphs.
\begin{Definition}
Suppose that $T:V\rightarrow\mathrm{Aut}(G)$ sends each $\alpha\in V$
to an automorphism of $G$. Write $T_{\alpha}\coloneqq T(\alpha)$. We
say that $T$ is a \emph{system of Dehn twists} on $G$ if for any
$\alpha,\beta\in V$,

(1) $T_{\alpha}\alpha=\alpha$,

(2) $T_{\alpha}\beta=\beta$ if there is an edge joining $\alpha$
to $\beta$,

(3) $\#\{T_{\alpha}^{k}\beta:k\in\mathbb{Z}\}=\infty$ if there is
no edge joining $\alpha$ to $\beta$.
\end{Definition}
Note that not every infinite graph has a system of Dehn twists since
this definition requires that the automorphism group is large.

In the following we assume that $G$ is equipped with a system $T$
of Dehn twists.
\begin{Lemma}\label{Lemma:Dehn-twist-rotate}
Let $\alpha,\beta,\gamma\in V$ such that $\{\alpha,\gamma\},\{\beta,\gamma\}\in E$. Then
$\{T_{\alpha}^{k}\beta,\gamma\}\in E$ for any $k\in\mathbb{Z}$.
\end{Lemma}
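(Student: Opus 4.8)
The plan is to exploit two facts: that each $T_\alpha$ is a graph automorphism (hence carries edges to edges), and that $T_\alpha$ fixes every vertex adjacent to $\alpha$ by condition~(2) of the definition of a system of Dehn twists. First I would observe that $\gamma$ is such a vertex: since $\{\alpha,\gamma\}\in E$, condition~(2) gives $T_\alpha\gamma=\gamma$. Applying $T_\alpha^{-1}$ to this identity yields $T_\alpha^{-1}\gamma=\gamma$ as well, and an easy induction in both directions then gives $T_\alpha^{k}\gamma=\gamma$ for every $k\in\mathbb{Z}$.

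Next I would apply the automorphism $T_\alpha^{k}$ to the edge $\{\beta,\gamma\}\in E$. Because $T_\alpha^{k}\in\mathrm{Aut}(G)$, it preserves the edge relation, so $\{T_\alpha^{k}\beta,\,T_\alpha^{k}\gamma\}\in E$. Substituting $T_\alpha^{k}\gamma=\gamma$ from the previous step turns this into $\{T_\alpha^{k}\beta,\gamma\}\in E$, which is exactly the claim. One should also note for cleanliness that $T_\alpha^{k}\beta\neq\gamma$: otherwise $\beta=T_\alpha^{-k}\gamma=\gamma$, contradicting that $\{\beta,\gamma\}$ is an edge (edges join distinct vertices); this confirms that $\{T_\alpha^{k}\beta,\gamma\}$ is a genuine pair and not a loop.

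There is essentially no obstacle here — the statement is a direct consequence of the definitions, and the only point requiring a moment's care is that ``automorphism of $G$'' must be read as preserving both edges and non-edges (so that $T_\alpha^{-1}$ is also an automorphism and the argument works for negative $k$), which is part of the definition of $\mathrm{Aut}(G)$ recalled earlier. I would keep the write-up to three or four lines.
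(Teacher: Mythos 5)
Your proof is correct and follows essentially the same route as the paper's: use that $T_\alpha$ is an automorphism preserving edges and that $T_\alpha\gamma=\gamma$ (hence $T_\alpha^k\gamma=\gamma$), then apply $T_\alpha^k$ to the edge $\{\beta,\gamma\}$. The only cosmetic difference is that you apply $T_\alpha^k$ in one shot rather than iterating one twist at a time, and you add the (correct but minor) remark that $T_\alpha^k\beta\neq\gamma$.
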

\begin{proof}

Since $T_{\alpha}$ is an automorphism, we get $\{T_{\alpha}\beta,T_{\alpha}\gamma\}\in E$
from $\{\beta,\gamma\}\in E$. Moreover, because $T$ is a system
of Dehn twists, we get $T_{\alpha}\gamma=\gamma$ from $\{\alpha,\gamma\}\in E$,
and hence $\{T_{\alpha}\beta,\gamma\}\in E$. Similarly, we have $\{T_{\alpha}^{-1}\beta,\gamma\}\in E$.
Repeating this process, we obtain the desired result.

\end{proof}

\prettyref{Definition:Dehn quasi-homothetic} extends naturally to
the setting of infinite graphs:
\begin{Definition}
We will denote by $E^{c}$ the set $\{(\alpha,\beta)\in V\times V:\{\alpha,\beta\}\notin E\}$. Let
$L$ be a real-valued function on $V$. We say that $L$ is a \emph{Dehn
quasi-homothetic} function of type $(f,A)$ on $(G,T)$, if there
exist two functions $f:\mathbb{N}\rightarrow\mathbb{R}_{+}$ and $A:E^{c}\rightarrow\mathbb{R}_{+}$
satisfying the following three conditions:

(1) $f(n)\rightarrow+\infty$ as $n\rightarrow+\infty$;

(2) For any positive integer $N$, the set 
\[
\{f(k)/f(j):j,k>N\}
\]
is dense in $\mathbb{R}_{+}$;

(3) For any $(\alpha,\beta)\in E^{c}$ we have 
\[
\lim_{n\rightarrow+\infty}\frac{L(T_{\alpha}^{n}\beta)}{f(n)}=A(\alpha,\beta)\cdot L(\alpha).
\]
\end{Definition}

Every function $L:V\rightarrow\mathbb{R}$ induces an oriented graph
$G_{L}\subset V\times V$, where $(\alpha,\beta)\in G_{L}$ if and
only if $\{\alpha,\beta\}\in G$ and $L(\alpha)<L(\beta)$. Recall
that we use $\alpha<\beta$ to denote $(\alpha,\beta)\in G_{L}$.

\begin{Theorem}\label{Theorem:main1.1}
Let $(G,T)$ be an infinite graph equipped with a system of Dehn twists. Assume
that for any $\{\alpha,\beta\}\in E$ there exists $\{\xi,\eta\}\in E$
such that
\[
G[\alpha,\beta,\xi,\eta]=\xymatrix{\alpha\ar@{-}[r]\ar@{-}[d] & \beta\ar@{-}[d]\\
\eta\ar@{-}[r] & \xi
}
.
\]
Let $L_{1},L_{2}$ be non-negative real-valued functions on $V$. Suppose
that they are Dehn quasi-homothetic functions of the same type and
induce the same oriented graph. Then for any $\{\alpha,\beta\}\in E$
we have
\[
L_{1}(\alpha)\cdot L_{2}(\beta)=L_{2}(\alpha)\cdot L_{1}(\beta).
\]
Furthermore, if $L_{2}>0$, the above equation is equivalent to 
\[
\frac{L_{1}(\alpha)}{L_{2}(\alpha)}=\frac{L_{1}(\beta)}{L_{2}(\beta)},
\]
and hence there exists a constant $k\geqslant0$ such that $L_{1}=k\cdot L_{2}$
if $G$ is connected.
\end{Theorem}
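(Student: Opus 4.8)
The plan is to establish the identity $L_1(\alpha)L_2(\beta)=L_2(\alpha)L_1(\beta)$ for every edge $\{\alpha,\beta\}\in E$; the two remaining assertions are then immediate — divide by $L_2(\alpha)L_2(\beta)$ when $L_2>0$, and when $G$ is in addition connected propagate the resulting equality of ratios along a path between any two vertices to conclude that $L_1/L_2$ equals a constant $k$, necessarily $\ge 0$ since $L_1\ge 0$ and $L_2>0$.

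So I would fix an edge $\{\alpha,\beta\}$ and take $\{\xi,\eta\}\in E$ as furnished by the hypothesis, so that the only edges among $\alpha,\beta,\xi,\eta$ are $\{\alpha,\beta\},\{\beta,\xi\},\{\xi,\eta\},\{\eta,\alpha\}$; in particular $(\alpha,\xi),(\beta,\eta)\in E^{c}$. Set
\[
\mu_{n}\coloneqq T_{\alpha}^{n}\xi,\qquad \nu_{m}\coloneqq T_{\beta}^{m}\eta\qquad(n,m\in\mathbb{N}).
\]
The crucial point is that $\mu_n$ and $\nu_m$ are joined in $G$ for \emph{all} $n,m$. Indeed, \prettyref{Lemma:Dehn-twist-rotate}, applied to $\xi$ twisted about $\alpha$ and tested against $\beta$ (legitimate as $\{\alpha,\beta\},\{\xi,\beta\}\in E$), gives $\{\mu_n,\beta\}\in E$, and the same lemma tested against $\eta$ (using $\{\alpha,\eta\},\{\xi,\eta\}\in E$) gives $\{\mu_n,\eta\}\in E$. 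Since $\mu_n$ is joined to $\beta$, axiom (2) in the definition of a system of Dehn twists forces $T_\beta^m\mu_n=\mu_n$; as $T_\beta^m$ is an automorphism of $G$, it carries the edge $\{\mu_n,\eta\}$ to the edge $\{T_\beta^m\mu_n,T_\beta^m\eta\}=\{\mu_n,\nu_m\}$. On the other hand, since $L_1,L_2$ are Dehn quasi-homothetic of the same type $(f,A)$ and $(\alpha,\xi),(\beta,\eta)\in E^{c}$, for $i=1,2$ one has
\[
\frac{L_i(\mu_n)}{f(n)}\longrightarrow P_i\coloneqq A(\alpha,\xi)L_i(\alpha),\qquad\frac{L_i(\nu_m)}{f(m)}\longrightarrow Q_i\coloneqq A(\beta,\eta)L_i(\beta),
\]
with $P_i,Q_i\ge 0$ and $f(n)\to\infty$. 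So $\mu_n$ carries a length comparable to $L_i(\alpha)$ and $\nu_m$ one comparable to $L_i(\beta)$, while the two families remain comparable in the oriented graph because they are joined.

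To conclude I would argue by contradiction. Suppose $L_1(\alpha)L_2(\beta)\ne L_2(\alpha)L_1(\beta)$; since the hypotheses and the target identity are symmetric in $L_1$ and $L_2$, I may assume $L_1(\alpha)L_2(\beta)<L_2(\alpha)L_1(\beta)$, which after multiplying by the positive number $A(\alpha,\xi)A(\beta,\eta)$ reads $P_1Q_2<P_2Q_1$ and forces $P_2>0$, $Q_1>0$. Choose $t>0$ with
\[
\frac{Q_2}{P_2}<t<\frac{Q_1}{P_1}
\]
(with the conventions $Q_2/P_2=0$ when $Q_2=0$ and $Q_1/P_1=+\infty$ when $P_1=0$); such $t$ exists precisely because $P_1Q_2<P_2Q_1$. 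By the density hypothesis (condition (2) of the definition) pick $n_k,m_k\to\infty$ with $f(n_k)/f(m_k)\to t$. Dividing the two asymptotics — noting $Q_1>0$, and that $f(m_k)\to\infty$ keeps the relevant denominators eventually positive — yields $L_1(\mu_{n_k})/L_1(\nu_{m_k})\to tP_1/Q_1<1$ and $L_2(\mu_{n_k})/L_2(\nu_{m_k})\to tP_2/Q_2>1$ (the last limit being $+\infty$ when $Q_2=0$, in which case one instead notes $L_2(\mu_{n_k})\to\infty$ while $L_2(\nu_{m_k})=o(f(m_k))$). Hence for all large $k$ one has $L_1(\mu_{n_k})<L_1(\nu_{m_k})$ but $L_2(\mu_{n_k})>L_2(\nu_{m_k})$; combined with $\{\mu_{n_k},\nu_{m_k}\}\in E$ this puts $(\mu_{n_k},\nu_{m_k})$ in $G_{L_1}$ but not in $G_{L_2}$, contradicting $G_{L_1}=G_{L_2}$.

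I expect the essential obstacle to be the choice of the two families in the second paragraph. One needs vertices whose $L_i$-values are asymptotically proportional to $L_i(\alpha)$ and to $L_i(\beta)$ \emph{separately} — which is exactly why $\xi$ must be twisted about $\alpha$ and $\eta$ about $\beta$, and where the $4$-cycle hypothesis genuinely enters — yet the resulting vertices must be pairwise joined in $G$, so that the (common) oriented graph actually constrains their relative order; reconciling these two demands is what the interplay between \prettyref{Lemma:Dehn-twist-rotate} and the fixed-point axiom for $T_\beta$ accomplishes. A secondary, purely bookkeeping, difficulty is that $L_1,L_2$ are only assumed non-negative, so one must phrase everything multiplicatively and allow the threshold $t$ to degenerate instead of working directly with the ratios $L_i(\alpha)/L_i(\beta)$.
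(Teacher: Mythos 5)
Your proposal is correct and follows essentially the same strategy as the paper's proof: argue by contradiction, use the four-cycle hypothesis to produce $\xi,\eta$, twist them about $\alpha$ and $\beta$ respectively, invoke \prettyref{Lemma:Dehn-twist-rotate} (together with the automorphism and fixed-point properties of $T$) to keep the twisted families joined in $G$, and use the density of $\{f(k)/f(j)\}$ to find indices at which $L_1$ and $L_2$ impose opposite orientations on the common edge. The only differences are cosmetic — you take the opposite WLOG sign, and you package the estimates as convergent limit ratios (handling the $P_1=0$ and $Q_2=0$ degeneracies explicitly) where the paper instead fixes an $\varepsilon>0$ and writes the four two-sided bounds — but the underlying mechanism is identical.
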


\begin{proof}
Our proof is inspired by the recent work of Parlier, Vo and Xu \cite[Section 4]{parlier2025orderingcurvessurfaces}
on length orders.

Suppose on the contrary that there exists $\{\alpha,\beta\}\in E$
such that
\[
L_{1}(\alpha)\cdot L_{2}(\beta)\neq L_{2}(\alpha)\cdot L_{1}(\beta).
\]
Without loss of generality we may assume that $L_{1}(\alpha)\cdot L_{2}(\beta)>L_{2}(\alpha)\cdot L_{1}(\beta)$.

Let us denote by $(f,A)$ the type of $L_{1}$ and $L_{2}$ as Dehn
quasi-homothetic functions.

By the assumption, there exist $\xi,\eta\in V$ such that
\[
G[\alpha,\beta,\xi,\eta]=\xymatrix{\alpha\ar@{-}[r]\ar@{-}[d] & \beta\ar@{-}[d]\\
\eta\ar@{-}[r] & \xi
}
.
\]

For each $k\in\mathbb{N}_{+}$, we set $\xi_{k}\coloneqq T_{\alpha}^{k}\xi$
and $\eta_{k}\coloneqq T_{\beta}^{k}\eta$. Then it follows from \prettyref{Lemma:Dehn-twist-rotate}
that for any $j,k\in\mathbb{N}_{+}$ we have
\begin{equation}
G[\alpha,\beta,\xi_{j},\eta_{k}]=\xymatrix{\alpha\ar@{-}[r]\ar@{-}[d] & \beta\ar@{-}[d]\\
\eta_{k}\ar@{-}[r] & \xi_{j}
}
.\label{eq:main1-1}
\end{equation}

For $m=1,2$, since $L_{m}$ is Dehn quasi-homothetic function of
type $(f,A)$, we have 
\begin{align*}
\lim_{j\rightarrow+\infty}\frac{L_{m}(\xi_{j})}{f(j)}=A(\alpha,\xi)\cdot L_{m}(\alpha) & ,\\
\lim_{k\rightarrow+\infty}\frac{L_{m}(\eta_{k})}{f(k)}=A(\beta,\eta)\cdot L_{m}(\beta) & .
\end{align*}
Hence for any $\varepsilon>0$ there exists $K(\varepsilon)>0$ such
that for any $j,k>K(\varepsilon)$ we have 
\begin{align*}
\left|\frac{L_{m}(\xi_{j})}{f(j)}-A(\alpha,\xi)\cdot L_{m}(\alpha)\right| & <\varepsilon,\\
\left|\frac{L_{m}(\eta_{k})}{f(k)}-A(\beta,\eta)\cdot L_{m}(\beta)\right| & <\varepsilon,
\end{align*}
and so 
\begin{equation}
\begin{cases}
L_{1}(\xi_{j})>f(j)\cdot(A(\alpha,\xi)\cdot L_{1}(\alpha)-\varepsilon),\\
L_{1}(\eta_{k})<f(k)\cdot(A(\beta,\eta)\cdot L_{1}(\beta)+\varepsilon),
\end{cases}\quad\begin{cases}
L_{2}(\xi_{j})<f(j)\cdot(A(\alpha,\xi)\cdot L_{2}(\alpha)+\varepsilon),\\
L_{2}(\eta_{k})>f(k)\cdot(A(\beta,\eta)\cdot L_{2}(\beta)-\varepsilon).
\end{cases}\label{eq:main1-2}
\end{equation}

On the other hand, since $L_{1}(\alpha)\cdot L_{2}(\beta)>L_{2}(\alpha)\cdot L_{1}(\beta)\geqslant0$,
we have 
\[
(A(\alpha,\xi)\cdot L_{1}(\alpha))\cdot(A(\beta,\eta)\cdot L_{2}(\beta))>(A(\alpha,\xi)\cdot L_{2}(\alpha))\cdot(A(\beta,\eta)\cdot L_{1}(\beta))\geqslant0,
\]
Therefore we can choose $\varepsilon>0$ such that
\[
(A(\alpha,\xi)\cdot L_{1}(\alpha)-\varepsilon)\cdot(A(\beta,\eta)\cdot L_{2}(\beta)-\varepsilon)>(A(\alpha,\xi)\cdot L_{2}(\alpha)+\varepsilon)\cdot(A(\beta,\eta)\cdot L_{1}(\beta)+\varepsilon)
\]
and the numbers
\[
A(\alpha,\xi)\cdot L_{1}(\alpha)-\varepsilon,\quad A(\beta,\eta)\cdot L_{2}(\beta)-\varepsilon,\quad A(\alpha,\xi)\cdot L_{2}(\alpha)+\varepsilon,\quad A(\beta,\eta)\cdot L_{1}(\beta)+\varepsilon
\]
are positive. Now we have
\[
\frac{A(\alpha,\xi)\cdot L_{1}(\alpha)-\varepsilon}{A(\beta,\eta)\cdot L_{1}(\beta)+\varepsilon}>\frac{A(\alpha,\xi)\cdot L_{2}(\alpha)+\varepsilon}{A(\beta,\eta)\cdot L_{2}(\beta)-\varepsilon}.
\]
According to the definition of Dehn quasi-homothetic function, there
exist $j,k>K(\varepsilon)$ such that 
\[
\frac{A(\alpha,\xi)\cdot L_{1}(\alpha)-\varepsilon}{A(\beta,\eta)\cdot L_{1}(\beta)+\varepsilon}>\frac{f(k)}{f(j)}>\frac{A(\alpha,\xi)\cdot L_{2}(\alpha)+\varepsilon}{A(\beta,\eta)\cdot L_{2}(\beta)-\varepsilon}.
\]
Thus \prettyref{eq:main1-2} implies
\[
\begin{cases}
L_{1}(\xi_{j})>L_{1}(\eta_{k}),\\
L_{2}(\xi_{j})<L_{2}(\eta_{k}).
\end{cases}
\]
Combining this with \prettyref{eq:main1-1} we obtain $\xi_{j}>\eta_{k}$
in $G_{L_{1}}$ and $\xi_{j}<\eta_{k}$ in $G_{L_{2}}$, which contradicts
the fact that $L_{1}$ and $L_{2}$ induce the same oriented graph.
\end{proof}

Unfortunately, it is the subgraph induced by non-separating curves,
rather than $C(S)$ itself, that satisfies the condition required
of $G$ in \prettyref{Theorem:main1.1}. Therefore we need the following
lemma:
\begin{Lemma}
\label{Lemma:separating-to-nonseparating}Let $W\subset V$ be invariant
under the actions of $T$, i.e., for each $\alpha\in V$ we have $T_{\alpha}(W)=W$. Assume
that:

(1) If $\{\alpha,\beta\}\in E$ satisfies $\alpha,\beta\in V-W$, then
there exist $\xi,\eta\in W$ such that
\[
G[\alpha,\beta,\xi,\eta]=\xymatrix{\alpha\ar@{-}[r]\ar@{-}[d] & \beta\ar@{-}[d]\\
\eta\ar@{-}[r] & \xi
}
;
\]

(2) If $\{\alpha,\beta\}\in E$ satisfies $\alpha\in V-W$ and $\beta\in W$,
then there exists $\xi\in W$ such that
\[
G[\alpha,\beta,\xi]=\xymatrix{\alpha\ar@{-}[r] & \beta\\
\xi\ar@{-}[ur]
}
.
\]

Let $L_{1},L_{2}:V\rightarrow\mathbb{R}$ be Dehn quasi-homothetic
functions of the same type on $(G,T)$. Suppose that there exists
$\{\alpha,\beta\}\in E$ such that 
\[
L_{1}(\alpha)\cdot L_{2}(\beta)\neq L_{2}(\alpha)\cdot L_{1}(\beta).
\]
Then there exists $\{\alpha',\beta'\}\in E$ with $\alpha',\beta'\in W$
such that
\[
L_{1}(\alpha')\cdot L_{2}(\beta')\neq L_{2}(\alpha')\cdot L_{1}(\beta').
\]
\end{Lemma}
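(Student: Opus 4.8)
The plan is to push the non-proportionality from the given edge $\{\alpha,\beta\}$ into $W$ by the same Dehn-twist trick used in the proof of \prettyref{Theorem:main1.1}, organizing everything around the antisymmetric quantity
\[
D(\gamma,\delta):=L_{1}(\gamma)L_{2}(\delta)-L_{2}(\gamma)L_{1}(\delta),
\]
so that the hypothesis reads $D(\alpha,\beta)\neq0$ and the conclusion asks for an edge $\{\alpha',\beta'\}$ with $\alpha',\beta'\in W$ and $D(\alpha',\beta')\neq0$. Let $(f,A)$ be the common type of $L_{1}$ and $L_{2}$. The only analytic input is this: if $(\zeta,\gamma)\in E^{c}$ then, dividing the two terms of $D(T_{\zeta}^{n}\gamma,\delta)$ by $f(n)$ and applying $L_{m}(T_{\zeta}^{n}\gamma)/f(n)\to A(\zeta,\gamma)L_{m}(\zeta)$ for both $m=1,2$ --- legitimate because the two types agree, so the same $f$ and the same $A$ appear --- one gets
\[
\frac{D(T_{\zeta}^{n}\gamma,\delta)}{f(n)}\to A(\zeta,\gamma)\,D(\zeta,\delta),
\]
and similarly $D(T_{\zeta}^{j}\gamma,T_{\zeta'}^{k}\gamma')/(f(j)f(k))\to A(\zeta,\gamma)A(\zeta',\gamma')D(\zeta,\zeta')$ for a double twist. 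Since $A$ is $\mathbb{R}_{+}$-valued, a nonzero limit forces $D\neq0$ for all large twist powers; the task is then purely to choose the twist so that the twisted curves stay adjacent to the right vertices and land inside $W$.

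Now split into cases on the location of $\alpha,\beta$. \emph{Case $\alpha,\beta\in W$:} take $(\alpha',\beta')=(\alpha,\beta)$. \emph{Case $\alpha\in V-W$, $\beta\in W$} (the swapped case follows since $D(\alpha,\beta)=-D(\beta,\alpha)$): hypothesis (2) furnishes $\xi\in W$ with $\{\xi,\beta\}\in E$ and $(\alpha,\xi)\in E^{c}$; set $\xi_{n}:=T_{\alpha}^{n}\xi$, which lies in $W$ by $T$-invariance of $W$ and satisfies $\{\xi_{n},\beta\}\in E$ for every $n$ by \prettyref{Lemma:Dehn-twist-rotate} applied to the edges $\{\alpha,\beta\}$ and $\{\xi,\beta\}$. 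Then $D(\xi_{n},\beta)/f(n)\to A(\alpha,\xi)D(\alpha,\beta)\neq0$, so $(\alpha',\beta')=(\xi_{n},\beta)$ works for $n$ large. \emph{Case $\alpha,\beta\in V-W$:} hypothesis (1) furnishes $\xi,\eta\in W$ with $G[\alpha,\beta,\xi,\eta]$ the $4$-cycle $\alpha-\beta-\xi-\eta-\alpha$, whence $(\alpha,\xi),(\beta,\eta)\in E^{c}$; set $\xi_{j}:=T_{\alpha}^{j}\xi$, $\eta_{k}:=T_{\beta}^{k}\eta$, both in $W$, and verify $\{\xi_{j},\eta_{k}\}\in E$ for all $j,k$ by three applications of \prettyref{Lemma:Dehn-twist-rotate} exactly as in the proof of \prettyref{Theorem:main1.1}. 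The double-twist limit gives $D(\xi_{j},\eta_{k})/(f(j)f(k))\to A(\alpha,\xi)A(\beta,\eta)D(\alpha,\beta)\neq0$, so $(\alpha',\beta')=(\xi_{j},\eta_{k})$ works for $j,k$ large.

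The main obstacle is not analytic but combinatorial bookkeeping: in each mixed case one must identify, from the local pictures in hypotheses (1) and (2), the one vertex that can be twisted --- i.e. sits at a non-edge $(\zeta,\cdot)\in E^{c}$ so that the quasi-homothetic limit applies --- while \prettyref{Lemma:Dehn-twist-rotate} guarantees it remains adjacent to the fixed vertex (or vertices) and $T$-invariance of $W$ guarantees it stays in $W$. Once the correct vertex to twist is chosen, the conclusion in each case is immediate from convergence of $D$ and positivity of $A$; note that, unlike in \prettyref{Theorem:main1.1}, neither $f(n)\to\infty$ nor the density of the ratios $f(k)/f(j)$ is needed here, only that $L_{1}$ and $L_{2}$ carry the \emph{same} type $(f,A)$.
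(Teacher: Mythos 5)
Your proof is correct and takes essentially the same approach as the paper: in each case you twist the vertices supplied by hypotheses (1) or (2) into $W$ and extract non-proportionality from the same limit computation (the paper uses a single twist parameter in the first case where you allow two, but the argument is identical). Your explicit use of the antisymmetric quantity $D$, the antisymmetry remark to cover the swapped mixed case, and the observation that only ``same type'' is needed (not growth or density of $f$) are welcome clarifications but not substantive departures.
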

\begin{proof}
Suppose that $L_{1}$ and $L_{2}$ are Dehn quasi-homothetic functions
of type $(f,A)$. It suffices to consider the cases where $\alpha$
or $\beta$ does not belong to $W$. These can be divided into two
cases.

\textbf{First case:} $\alpha,\beta\in V-W$.

By the assumption, there exist $\xi,\eta\in W$ such that
\[
G[\alpha,\beta,\xi,\eta]=\xymatrix{\alpha\ar@{-}[r]\ar@{-}[d] & \beta\ar@{-}[d]\\
\eta\ar@{-}[r] & \xi
}
.
\]

For each $k\in\mathbb{N}_{+}$, we set $\xi_{k}=T_{\alpha}^{k}(\xi)$
and $\eta_{k}=T_{\beta}^{k}(\eta)$. Then we have
\[
G[\alpha,\beta,\xi_{k},\eta_{k}]=\xymatrix{\alpha\ar@{-}[r]\ar@{-}[d] & \beta\ar@{-}[d]\\
\eta_{k}\ar@{-}[r] & \xi_{k}
}
.
\]
Moreover, since $W$ is invariant under the actions of $T$, we see
that $\xi_{k},\eta_{k}\in W$.

For $m=1,2$, since $L_{m}$ is Dehn quasi-homothetic function of
type $(f,A)$, we have 
\begin{align*}
\lim_{k\rightarrow+\infty}\frac{L_{m}(\xi_{k})}{f(k)}=A(\alpha,\xi)\cdot L_{m}(\alpha) & ,\\
\lim_{k\rightarrow+\infty}\frac{L_{m}(\eta_{k})}{f(k)}=A(\beta,\eta)\cdot L_{m}(\beta) & .
\end{align*}
Hence 
\begin{align*}
 & \hphantom{=}\lim_{k\rightarrow+\infty}\frac{L_{1}(\xi_{k})\cdot L_{2}(\eta_{k})-L_{2}(\xi_{k})\cdot L_{1}(\eta_{k})}{[f(k)]^{2}}\\
 & =\lim_{k\rightarrow+\infty}\left(\frac{L_{1}(\xi_{k})}{f(k)}\cdot\frac{L_{2}(\eta_{k})}{f(k)}-\frac{L_{2}(\xi_{k})}{f(k)}\cdot\frac{L_{1}(\eta_{k})}{f(k)}\right)\\
 & =A(\alpha,\xi)\cdot L_{1}(\alpha)\cdot A(\beta,\eta)\cdot L_{2}(\beta)-A(\alpha,\xi)\cdot L_{2}(\alpha)\cdot A(\beta,\eta)\cdot L_{1}(\beta)\\
 & =A(\alpha,\xi)\cdot A(\beta,\eta)\cdot(L_{1}(\alpha)\cdot L_{2}(\beta)-L_{2}(\alpha)\cdot L_{1}(\beta))\\
 & \neq0.
\end{align*}
It follows that $\alpha'=\xi_{k}$ and $\beta'=\eta_{k}$ have the
desired property for sufficiently large $k$.

\textbf{Second case:} Exactly one of $\alpha$ and $\beta$ belongs
to $W$.

Without loss of generality we may assume that $\alpha\notin W$ but
$\beta\in W$.

By the assumption, there exists $\xi\in W$ such that
\[
G[\alpha,\beta,\xi]=\xymatrix{\alpha\ar@{-}[r] & \beta\\
\xi\ar@{-}[ur]
}
.
\]

For each $k\in\mathbb{N}_{+}$ we set $\xi_{k}=T_{\alpha}^{k}\xi$.
Then we have $\xi_{k}\in W$ and
\[
G[\alpha,\beta,\xi_{k}]=\xymatrix{\alpha\ar@{-}[r] & \beta\\
\xi_{k}\ar@{-}[ur]
}
.
\]

For $m=1,2$, since $L_{m}$ is Dehn quasi-homothetic function of
type $(f,A)$, we have 
\[
\lim_{k\rightarrow+\infty}\frac{L_{m}(\xi_{k})}{f(k)}=A(\alpha,\xi)\cdot L_{m}(\alpha),
\]
hence 
\begin{align*}
 & \hphantom{=}\lim_{k\rightarrow+\infty}\frac{L_{1}(\xi_{k})\cdot L_{2}(\beta)-L_{2}(\xi_{k})\cdot L_{1}(\beta)}{f(k)}\\
 & =\lim_{k\rightarrow+\infty}\left(\frac{L_{1}(\xi_{k})}{f(k)}\cdot L_{2}(\beta)-\frac{L_{2}(\xi_{k})}{f(k)}\cdot L_{1}(\beta)\right)\\
 & =A(\alpha,\xi)\cdot(L_{1}(\alpha)\cdot L_{2}(\beta)-L_{2}(\alpha)\cdot L_{1}(\beta))\\
 & \neq0.
\end{align*}
It follows that $\alpha'=\xi_{k}$ and $\beta'=\beta$ have the desired
property for sufficiently large $k$.
\end{proof}
Now we are ready to generalize \prettyref{Theorem:main1.1} as follows:
\begin{Theorem}
\label{Theorem:main1.2}Let $(G,T)$ be an infinite graph equipped
with a system of Dehn twists. Let $W\subset V$ be invariant under
the actions of $T$. Assume that:

(1) If $\{\alpha,\beta\}\in E$ satisfies $\alpha,\beta\in V-W$ or
$\alpha,\beta\in W$, then there exist $\xi,\eta\in W$ such that
\[
G[\alpha,\beta,\xi,\eta]=\xymatrix{\alpha\ar@{-}[r]\ar@{-}[d] & \beta\ar@{-}[d]\\
\eta\ar@{-}[r] & \xi
}
;
\]

(2) If $\{\alpha,\beta\}\in E$ satisfies $\alpha\in V-W$ and $\beta\in W$,
then there exists $\xi\in W$ such that
\[
G[\alpha,\beta,\xi]=\xymatrix{\alpha\ar@{-}[r] & \beta\\
\xi\ar@{-}[ur]
}
.
\]

Let $L_{1},L_{2}$ be non-negative real-valued functions on $V$. Suppose
that they are Dehn quasi-homothetic functions of the same type and
induce the same oriented graph. Then for any $\{\alpha,\beta\}\in E$
we have
\[
L_{1}(\alpha)\cdot L_{2}(\beta)=L_{2}(\alpha)\cdot L_{1}(\beta).
\]
Furthermore, if $L_{2}>0$, the above equation is equivalent to 
\[
\frac{L_{1}(\alpha)}{L_{2}(\alpha)}=\frac{L_{1}(\beta)}{L_{2}(\beta)},
\]
and hence there exists a constant $k\geqslant0$ such that $L_{1}=k\cdot L_{2}$
if $G$ is connected.
\end{Theorem}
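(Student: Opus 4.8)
The plan is to reduce \prettyref{Theorem:main1.2} to \prettyref{Theorem:main1.1}, applied not to $G$ itself but to the induced subgraph $G[W]$, with \prettyref{Lemma:separating-to-nonseparating} serving to transport a hypothetical failure of the product identity from an arbitrary edge of $G$ into $W$.

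First I would argue by contradiction. Suppose there is an edge $\{\alpha,\beta\}\in E$ with $L_{1}(\alpha)L_{2}(\beta)\neq L_{2}(\alpha)L_{1}(\beta)$. Hypothesis (1) of \prettyref{Theorem:main1.2} restricts in particular to hypothesis (1) of \prettyref{Lemma:separating-to-nonseparating} (squares inside $W$ for edges inside $V-W$), and hypothesis (2) is verbatim the same; since $W$ is $T$-invariant and $L_{1},L_{2}$ are Dehn quasi-homothetic of a common type, \prettyref{Lemma:separating-to-nonseparating} applies and produces an edge $\{\alpha',\beta'\}\in E$ with $\alpha',\beta'\in W$ and $L_{1}(\alpha')L_{2}(\beta')\neq L_{2}(\alpha')L_{1}(\beta')$.

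Next I would check that $(G[W],T|_{W})$ together with $L_{1}|_{W},L_{2}|_{W}$ meets every hypothesis of \prettyref{Theorem:main1.1}. Since $W$ is $T$-invariant, each $T_{\alpha}$ with $\alpha\in W$ restricts to an automorphism of $G[W]$, and the three defining conditions of a system of Dehn twists are inherited; in particular $W$ is infinite, because applying condition (3) to either diagonal of the $4$-cycle furnished by hypothesis (1) on $\alpha',\beta'$ produces an infinite orbit, which stays in $W$ by invariance. The restrictions $L_{1}|_{W},L_{2}|_{W}$ are non-negative, and they are Dehn quasi-homothetic of the common type obtained by restricting $(f,A)$ to non-edges lying inside $W$ — the defining limits only involve orbits $T_{\alpha}^{n}\beta$ with $\alpha,\beta\in W$, which remain in $W$ — and they induce the restriction to $G[W]$ of the common oriented graph on $G$. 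Finally, because $G[W]$ is an \emph{induced} subgraph, hypothesis (1) of \prettyref{Theorem:main1.2} says precisely that every edge of $G[W]$ lies in an induced $4$-cycle whose two remaining vertices are again in $W$, which is exactly the combinatorial hypothesis of \prettyref{Theorem:main1.1}. Hence \prettyref{Theorem:main1.1} gives $L_{1}(\alpha')L_{2}(\beta')=L_{2}(\alpha')L_{1}(\beta')$, contradicting the previous paragraph. Therefore $L_{1}(\alpha)L_{2}(\beta)=L_{2}(\alpha)L_{1}(\beta)$ for every $\{\alpha,\beta\}\in E$, and the equivalence with the ratio identity when $L_{2}>0$, together with the conclusion $L_{1}=k\cdot L_{2}$ for some $k\geqslant0$ when $G$ is connected, follows word for word as at the end of the proof of \prettyref{Theorem:main1.1}.

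The main point requiring care — and essentially the only nonroutine step — is the bookkeeping in the previous paragraph: one must make sure that passing to $G[W]$ preserves all the structure (infiniteness, the restricted system of Dehn twists, the common Dehn quasi-homothetic type, the common induced oriented graph) and, crucially, that the weaker square/triangle hypotheses of \prettyref{Theorem:main1.2} upgrade, on the sub-universe $W$, to the full $4$-cycle hypothesis that \prettyref{Theorem:main1.1} demands. Everything else is transported verbatim from $G$ to $G[W]$.
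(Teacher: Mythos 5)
Your proposal is correct and follows essentially the same route as the paper: contradiction, push a failure of the product identity into $W$ via Lemma \ref{Lemma:separating-to-nonseparating}, then apply Theorem \ref{Theorem:main1.1} to $(G[W],T|_W,L_1|_W,L_2|_W)$. The one thing you add that the paper leaves implicit is the verification that $W$ (hence $G[W]$) is infinite, which you correctly deduce from condition (3) of a system of Dehn twists applied to a diagonal of the induced $4$-cycle inside $W$; this is a genuine hypothesis of Theorem \ref{Theorem:main1.1}, so making it explicit is a small improvement in rigor rather than a deviation.
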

\begin{proof}
Assume that $L_{1}$ and $L_{2}$ are Dehn quasi-homothetic functions
of type $(f,A)$.

Suppose on the contrary that there exists $\{\alpha,\beta\}\in E$
such that
\[
L_{1}(\alpha)\cdot L_{2}(\beta)\neq L_{2}(\alpha)\cdot L_{1}(\beta).
\]
Then \prettyref{Lemma:separating-to-nonseparating} shows that there
exists $\{\alpha',\beta'\}\in E$ with $\alpha',\beta'\in W$ such
that
\[
L_{1}(\alpha')\cdot L_{2}(\beta')\neq L_{2}(\alpha')\cdot L_{1}(\beta').
\]

Since $W$ is invariant under the actions of $T$, it is natural to
induce a system of Dehn twists on $G[W]$ by $T$. Specifically, we
define $T|_{W}:W\rightarrow\mathrm{Aut}(G[W])$ by setting
\[
(T|_{W})_{\sigma}(\gamma)\coloneqq T_{\sigma}\gamma
\]
for all $\sigma,\gamma\in W$. Then $T|_{W}$ is a system of Dehn twists on $G[W]$.

Similarly, it is natural to induce Dehn quasi-homothetic functions
on $(G[W],T|_{W})$ by $L_{1}$ and $L_{2}$. Specifically, we set
$E^{c}|_{W}\coloneqq\{(\sigma,\gamma)\in W\times W:\{\sigma,\gamma\}\notin E\}$.
Then the restrictions of $L_{1}$ and $L_{2}$ to $W$ are Dehn quasi-homothetic
functions of type $(f,A|_{(E^{c}|_{W})})$ on $(G[W],T|_{W})$.

Since $(G[W],T|_{W},L_{1}|_{W},L_{2}|_{W})$ satisfies the conditions
required of $(G,T,L_{1},L_{2})$ in \prettyref{Theorem:main1.1},
we conclude that
\[
L_{1}(\alpha')\cdot L_{2}(\beta')=L_{2}(\alpha')\cdot L_{1}(\beta'),
\]
a contradiction!
\end{proof}

\begin{proof}[Proof of \prettyref{Theorem:main1}]
Setting $G=C(S)$ and choosing $W$ to be the set of non-separating
curves, we see that \prettyref{Theorem:main1.2} specializes to \prettyref{Theorem:main1},
as desired.
\end{proof}

\subsection{Functions with sublevel sets finite}\label{subsec:finitely-small}

First, we list some elementary properties of the distance $d_{G_{L}}$, especially its weak correspondence with the geometric intersection number:
\begin{lemma}\label{Lemma:distance-basic}
Let $G_{L}$ be the oriented graph induced by a function $L:V(S)\to\mathbb{R}$.
Then for any $\alpha,\beta\in V(S)$ and any $\varphi\in\mathrm{Aut}(G_{L})$
we have:

\begin{description}

\item [(1)] $d_{G_{L}}(\alpha,\beta)=d_{G_{L}}(\varphi\cdot\alpha,\varphi\cdot\beta)$;

\item [(2)] If $L(\alpha)\geqslant L(\beta)$, then $d_{G_{L}}(\alpha,\beta)=\infty$;

\item [(3.1)] If $d_{G_{L}}(\alpha,\beta)=1$, then $i(\alpha,\beta)=0$;

\item [(3.2)] If $2\leqslant d_{G_{L}}(\alpha,\beta)<\infty$, then $i(\alpha,\beta)>0$;

\item [(3.3)] If $d_{G_{L}}(\alpha,\beta)=\infty$ and $L(\alpha)<L(\beta)$,
then $i(\alpha,\beta)>0$.

\end{description}
\end{lemma}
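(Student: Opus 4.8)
The plan is to treat the five items in turn; all of them flow from two elementary observations about $G_L$. The first observation: by definition a directed edge $\gamma<\delta$ of $G_L$ exists exactly when $i(\gamma,\delta)=0$ and $L(\gamma)<L(\delta)$, so along any directed path $\alpha=\alpha_1<\alpha_2<\cdots<\alpha_m=\beta$ one has $L(\alpha_1)<L(\alpha_2)<\cdots<L(\alpha_m)$; in particular a directed path from $\alpha$ to $\beta$ can exist only when $L(\alpha)<L(\beta)$. The second observation is the one already recorded just before the lemma: every $\varphi\in\mathrm{Aut}(G_L)$, and likewise $\varphi^{-1}$, carries directed paths to directed paths of the same length. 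I would fix at the outset the conventions $d_{G_L}(\gamma,\gamma)=0$ and $\inf\emptyset=\infty$.

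For (1) I would use the second observation: $\varphi$ and $\varphi^{-1}$ give mutually inverse, length-preserving maps between the directed paths from $\alpha$ to $\beta$ and the directed paths from $\varphi\cdot\alpha$ to $\varphi\cdot\beta$, so the two sets of path lengths coincide, and hence so do their infima. For (2) I would apply the first observation contrapositively: if $L(\alpha)\geqslant L(\beta)$ then there is no directed path from $\alpha$ to $\beta$, so $d_{G_L}(\alpha,\beta)=\infty$. Item (3.1) is immediate from the definition of $G_L$, since $d_{G_L}(\alpha,\beta)=1$ means $(\alpha,\beta)\in G_L$, which forces $i(\alpha,\beta)=0$.

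For (3.2) and (3.3) I would argue by contradiction, assuming $i(\alpha,\beta)=0$. First I dispose of the degenerate case $\alpha=\beta$: this would give $d_{G_L}(\alpha,\beta)=0$ and $L(\alpha)=L(\beta)$, contradicting $d_{G_L}(\alpha,\beta)\geqslant 2$ in (3.2) and $L(\alpha)<L(\beta)$ in (3.3). So $\alpha$ and $\beta$ are disjoint and hence joined by an edge of $C(S)$. In case (3.2) the hypothesis $d_{G_L}(\alpha,\beta)<\infty$ gives a directed path from $\alpha$ to $\beta$, whence $L(\alpha)<L(\beta)$ by the first observation; together with disjointness this yields $(\alpha,\beta)\in G_L$, so $d_{G_L}(\alpha,\beta)=1$, contradicting $d_{G_L}(\alpha,\beta)\geqslant 2$. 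In case (3.3) the hypothesis $L(\alpha)<L(\beta)$ together with disjointness already yields $(\alpha,\beta)\in G_L$, so $d_{G_L}(\alpha,\beta)=1$, contradicting $d_{G_L}(\alpha,\beta)=\infty$. In either case $i(\alpha,\beta)>0$.

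I do not expect any genuine obstacle here: the lemma is a bookkeeping statement, and the only places that require a moment's attention are fixing the boundary conventions above and remembering to rule out $\alpha=\beta$ before invoking the edge of $C(S)$ in (3.2) and (3.3).
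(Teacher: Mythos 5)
Your proof is correct and follows essentially the same route as the paper's: both treat (1) via the path-preserving property of $\varphi$ and $\varphi^{-1}$, both derive (2) from the monotonicity of $L$ along directed paths, and both argue (3.2) and (3.3) by contradiction using that $i(\alpha,\beta)=0$ together with $L(\alpha)<L(\beta)$ would force a length-one path. The only cosmetic difference is your explicit handling of the degenerate case $\alpha=\beta$, which the paper silently omits since the hypotheses of (3.2) and (3.3) already exclude it.
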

 
\begin{proof}
\textbf{(1)} This follows from the fact that both $\varphi$ and $\varphi^{-1}$ send
paths to paths and preserve their lengths.

\textbf{(2)} The existence of a path from $\alpha$ to $\beta$ would
imply $L(\alpha)<L(\beta)$, a contradiction.

\textbf{(3.1)} If $d_{G_{L}}(\alpha,\beta)=1$, then there is a directed edge from $\alpha$ to $\beta$. Thus $i(\alpha,\beta)=0$.

\textbf{(3.2)} Property (2) tells us that $L(\alpha)<L(\beta)$. If
$i(\alpha,\beta)=0$, then $\alpha<\beta$, and hence $d_{G_{L}}(\alpha,\beta)=1$,
a contradiction.

\textbf{(3.3)} Otherwise there would be a directed edge from $\alpha$
to $\beta$, and hence $d_{G_{L}}(\alpha,\beta)=1$, a contradiction.
\end{proof}

Second, the following lemma, while being easy to prove, captures the
essential property we need for the function $L$ and will be used repeatedly in the sequel:
\begin{Lemma}\label{Lemma:tends-to-infty}
Let $L:V(S)\rightarrow\mathbb{R}$ be a function. Then $L$ has finite
sublevel sets if and only if for any infinite sequence $\{\alpha_{n}\}$
of distinct points in $V(S)$ we have
\[
\lim_{n\to\infty}L(\alpha_{n})=+\infty.
\]
\end{Lemma}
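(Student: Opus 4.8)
The statement is a standard equivalence between a "finiteness of sublevel sets" condition and a "divergence along any injective sequence" condition, so the plan is to prove the two implications separately by elementary arguments, with no appeal to the curve complex structure — only the set-theoretic content matters.

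First I would prove the forward direction: assume $L$ has finite sublevel sets and let $\{\alpha_n\}$ be a sequence of pairwise distinct points of $V(S)$. To show $L(\alpha_n)\to+\infty$, fix an arbitrary $r\in\mathbb{R}$; I must produce $N$ so that $L(\alpha_n)>r$ for all $n\geqslant N$. The set $F_r\coloneqq L^{-1}((-\infty,r])$ is finite by hypothesis, so only finitely many indices $n$ can have $\alpha_n\in F_r$ — here is precisely where the distinctness of the $\alpha_n$ is used, since a finite set can contain only finitely many distinct terms of the sequence. Taking $N$ larger than all those finitely many indices gives $L(\alpha_n)>r$ for $n\geqslant N$, which is the definition of $L(\alpha_n)\to+\infty$.

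For the reverse direction I would argue by contraposition: suppose $L$ does not have finite sublevel sets, so there is some $r\in\mathbb{R}$ with $L^{-1}((-\infty,r])$ infinite. An infinite set has a countably infinite subset, so I can extract a sequence $\{\alpha_n\}$ of pairwise distinct points all lying in $L^{-1}((-\infty,r])$, i.e. with $L(\alpha_n)\leqslant r$ for every $n$. Then $\{\alpha_n\}$ is an infinite sequence of distinct points for which $L(\alpha_n)\not\to+\infty$ (indeed it is bounded above), which negates the right-hand condition. This completes the equivalence.

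There is no real obstacle here; the only point requiring a moment's care is to make sure the distinctness hypothesis is genuinely invoked in the forward direction (otherwise a constant sequence would be a counterexample), and, in the reverse direction, to phrase the extraction of a countably infinite subsequence of an infinite set cleanly. Both are routine. I would keep the write-up to a few lines.
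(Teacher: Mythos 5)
Your proof is correct and complete. The paper does not actually supply a proof of this lemma (it is noted as "easy to prove" and stated without argument), so there is nothing to compare against; your two-direction argument — using finiteness of $L^{-1}((-\infty,r])$ together with distinctness of the $\alpha_n$ to bound the number of small terms in the forward direction, and extracting a countably infinite subset of an infinite sublevel set for the contrapositive of the reverse direction — is exactly the standard argument the authors surely intended.
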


It follows that any collection of disjoint simple closed curves can
be extended to a system of decomposing curves by adding longer and
longer simple closed curves:
\begin{Corollary}\label{Corollary:pants-decomposition}
Let $G_{L}$ be the oriented graph induced by a function $L:V(S)\rightarrow\mathbb{R}$
which has finite sublevel sets. Suppose that $m<3g-3$ and $\alpha_{1},\ldots,\alpha_{m}\in V(S)$
satisfy $i(\alpha_{j},\alpha_{k})=0$ for all $j,k$. Then this collection
can be extended to a system of decomposing curves $\alpha_{1},\ldots,\alpha_{3g-3}$
such that
\[
\max_{1\leqslant j\leqslant m}L(\alpha_{j})<L(\alpha_{m+1})<\cdots<L(\alpha_{3g-3}).
\]
\end{Corollary}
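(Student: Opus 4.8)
The plan is to construct $\alpha_{m+1},\dots,\alpha_{3g-3}$ one curve at a time, by induction on $k\ge m$. Suppose we have already found pairwise disjoint and pairwise distinct curves $\alpha_1,\dots,\alpha_k$ with $m\le k<3g-3$ and $\max_{1\le j\le m}L(\alpha_j)<L(\alpha_{m+1})<\dots<L(\alpha_k)$; for $k=m$ this is exactly the hypothesis. I claim there is a curve $\alpha_{k+1}\in V(S)$, disjoint from and distinct from every $\alpha_j$ with $1\le j\le k$, such that
\[
L(\alpha_{k+1})>\max_{1\le j\le k}L(\alpha_j).
\]
Granting the claim, the curves $\alpha_1,\dots,\alpha_{k+1}$ again satisfy the induction hypothesis (note that $\max_{1\le j\le k}L(\alpha_j)=L(\alpha_k)$ once $k>m$), and after $3g-3-m$ steps we reach a system of decomposing curves $\alpha_1,\dots,\alpha_{3g-3}$ with the asserted chain of inequalities.

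To prove the claim, I first observe that the set
\[
A_k:=\{\gamma\in V(S): i(\gamma,\alpha_j)=0\ \text{and}\ \gamma\ne\alpha_j\ \text{for all}\ 1\le j\le k\}
\]
is infinite. Since $k<3g-3$, the collection $\{\alpha_j\}_{j\le k}$ is not a system of decomposing curves, so when $S$ is cut along $\alpha_1\cup\dots\cup\alpha_k$ at least one component $\Sigma$ is not a three-holed sphere; being a piece of the cut-open surface it has negative Euler characteristic, hence complexity at least one. Such a $\Sigma$ carries infinitely many distinct isotopy classes of essential, non-peripheral simple closed curves; each of these is disjoint from every $\alpha_j$, is not isotopic in $S$ to any $\alpha_j$ (it is non-peripheral in $\Sigma$), remains essential in $S$, and distinct classes in $\Sigma$ remain distinct in $S$ --- these are the standard facts about essential subsurfaces and $\pi_1$-injectivity. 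Hence these curves provide infinitely many elements of $A_k$.

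Now the hypothesis on $L$ finishes the claim. Since $L$ has finite sublevel sets, the set $L^{-1}\big((-\infty,r]\big)$ is finite for $r:=\max_{1\le j\le k}L(\alpha_j)$, so it cannot contain the infinite set $A_k$; equivalently, by \prettyref{Lemma:tends-to-infty}, $L$ is unbounded above on $A_k$. Choosing any $\alpha_{k+1}\in A_k$ with $L(\alpha_{k+1})>r$ proves the claim, and with it the corollary.

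The only step with genuine content is the infinitude of $A_k$: it rests on the topological facts that cutting $S$ along fewer than $3g-3$ disjoint curves must leave a component of positive complexity, and that a surface of positive complexity contains infinitely many pairwise non-isotopic essential non-peripheral simple closed curves which stay essential and pairwise non-isotopic after inclusion into $S$. Once these are granted, \prettyref{Lemma:tends-to-infty} does the rest, and the bookkeeping of the inequalities is routine.
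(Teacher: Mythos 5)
Your proof is correct and follows essentially the same route as the paper: cut $S$ along the already-chosen curves, note that some component of the complement still carries infinitely many essential non-peripheral curves (since fewer than $3g-3$ curves have been used), and invoke \prettyref{Lemma:tends-to-infty} to select one with a larger $L$-value, then iterate. Your write-up just spells out the inductive bookkeeping and the topological facts about complexity more explicitly than the paper does.
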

\begin{proof}
Let us cut $S$ along $\alpha_{1},\cdots,\alpha_{m}$ and obtain a
(possibly disconnected) surface $S'$ with boundary. Since there are
infinitely many simple closed curves lying on $S'$ and not isotopic
to any component of $\partial S'$, it follows from \prettyref{Lemma:tends-to-infty}
that there exists $\alpha_{m+1}\in V(S)$ such that $i(\alpha_{j},\alpha_{m+1})=0$
for any $j\leqslant m$ and 
\[
\max_{1\leqslant j\leqslant m}L(\alpha_{j})<L(\alpha_{m+1}).
\]

Iterating the above operation we get the desired curves and complete
the proof.
\end{proof}

Third, we characterize non-separating curves in terms of the distance $d_{G_{L}}$:
\begin{Lemma}
\label{Lemma:nonseparating_characterization}Let $L:V(S)\rightarrow\mathbb{R}$
be a function that has finite sublevel sets. Then $\alpha\in V(S)$
is a non-separating curve if and only if the following condition is
satisfied: If $\beta_{1},\beta_{2}\in V(S)$ are distinct vertices
with
\[
d_{G_{L}}(\alpha,\beta_{1})=d_{G_{L}}(\alpha,\beta_{2})=1,
\]
then there exists an infinite sequence $\{\gamma_{k}\}$ of distinct
vertices in $V(S)$ such that for each $k$ we have 
\[
d_{G_{L}}(\alpha,\gamma_{k})=1,\quad d_{G_{L}}(\beta_{1},\gamma_{k})\neq1,\quad d_{G_{L}}(\beta_{2},\gamma_{k})\neq1.
\]
\end{Lemma}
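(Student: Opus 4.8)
The statement is a topological characterization of non-separating curves purely in terms of the oriented graph $G_L$, so the proof splits into the two implications, and the main work is translating the combinatorial condition into the language of intersection numbers using \prettyref{Lemma:distance-basic} and \prettyref{Corollary:pants-decomposition}. First I would unwind what the hypotheses mean: by part (3.1) of \prettyref{Lemma:distance-basic}, $d_{G_L}(\alpha,\beta_i)=1$ forces $i(\alpha,\beta_i)=0$, so $\beta_1,\beta_2$ are disjoint from $\alpha$; and $d_{G_L}(\beta_i,\gamma_k)\neq 1$ together with $d_{G_L}(\alpha,\gamma_k)=1$ (hence $i(\alpha,\gamma_k)=0$) should be arranged to encode $i(\beta_i,\gamma_k)>0$. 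The subtlety is that $d_{G_L}(\beta_i,\gamma_k)\neq 1$ means neither $\beta_i<\gamma_k$ nor $\gamma_k<\beta_i$ as an \emph{edge}; since $L$ has finite sublevel sets, by \prettyref{Lemma:tends-to-infty} we can always push the $\gamma_k$ to be very long (larger than $L(\alpha),L(\beta_1),L(\beta_2)$), so $d_{G_L}(\beta_i,\gamma_k)\neq 1$ becomes exactly the statement that there is no edge from $\beta_i$ to $\gamma_k$, i.e.\ $i(\beta_i,\gamma_k)\neq 0$. This is the recurring trick: finite sublevel sets let us choose auxiliary curves long enough that the orientation of edges is forced by length, so $d_{G_L}$-conditions reduce to pure disjointness/intersection conditions.

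\medskip
\noindent\textbf{($\Rightarrow$) If $\alpha$ is non-separating.} Suppose $\beta_1\neq\beta_2$ with $i(\alpha,\beta_1)=i(\alpha,\beta_2)=0$. Cutting $S$ along $\alpha$ yields a connected surface $S_\alpha$ of genus $g-1$ with two boundary components; $\beta_1,\beta_2$ descend to (possibly empty or boundary-parallel) curve systems on $S_\alpha$. Since $g-1\geq 1$ and $S_\alpha$ has nonempty boundary, $S_\alpha$ contains infinitely many isotopy classes of essential simple closed curves that are disjoint from $\alpha$ but \emph{not} disjoint from $\beta_1$ and not disjoint from $\beta_2$ — for instance, one can find a single essential curve $\delta_0$ on $S_\alpha$ intersecting both $\beta_1$ and $\beta_2$ (using that the complement is not a collection of annuli), and then apply high powers of a Dehn twist $T_{\alpha}$ or of a twist crossing $\alpha$ to produce infinitely many distinct such curves, all disjoint from $\alpha$ and still intersecting $\beta_1$ and $\beta_2$. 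Among these infinitely many candidates, by \prettyref{Lemma:tends-to-infty} all but finitely many have $L$-value exceeding $\max\{L(\alpha),L(\beta_1),L(\beta_2)\}$; call the resulting infinite sequence $\{\gamma_k\}$. Then $i(\alpha,\gamma_k)=0$ and $L(\alpha)<L(\gamma_k)$ give $d_{G_L}(\alpha,\gamma_k)=1$, while $i(\beta_i,\gamma_k)>0$ gives $d_{G_L}(\beta_i,\gamma_k)\neq 1$ by part (3.1). So the condition holds.

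\medskip
\noindent\textbf{($\Leftarrow$) If $\alpha$ is separating.} I argue the contrapositive: produce a specific pair $\beta_1,\beta_2$ violating the condition. When $\alpha$ separates $S$ into two pieces $S_1,S_2$, choose $\beta_1\subset S_1$ and $\beta_2\subset S_2$ essential and disjoint from $\alpha$ (possible since $g\geq 2$, so each side has genus $\geq 1$); adjust by \prettyref{Corollary:pants-decomposition} or \prettyref{Lemma:tends-to-infty} if needed so that $L(\alpha)<L(\beta_i)$, giving $d_{G_L}(\alpha,\beta_i)=1$. Now any $\gamma$ with $i(\alpha,\gamma)=0$ lies (up to isotopy) entirely in $S_1$ or entirely in $S_2$; say $\gamma\subset S_1$. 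Then $i(\gamma,\beta_2)=0$, so $\gamma$ and $\beta_2$ are adjacent in $C(S)$, which means $d_{G_L}(\gamma,\beta_2)=1$ or $d_{G_L}(\beta_2,\gamma)=1$ — in particular, one cannot have both $d_{G_L}(\beta_1,\gamma)\neq 1$ and $d_{G_L}(\beta_2,\gamma)\neq 1$ unless $\gamma=\beta_2$ (the disjointness $i(\gamma,\beta_2)=0$ with $\gamma\neq\beta_2$ forces an edge in one direction). Thus only finitely many (in fact at most one, namely $\gamma=\beta_2$, and symmetrically $\gamma=\beta_1$) candidates $\gamma$ can satisfy all three required conditions, so no infinite sequence $\{\gamma_k\}$ exists and the condition fails. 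The main obstacle is the first direction: verifying that the cut surface $S_\alpha$ genuinely admits \emph{infinitely many} distinct curves disjoint from $\alpha$ yet crossing \emph{both} $\beta_1$ and $\beta_2$ simultaneously — this needs a small change-of-coordinates / Dehn-twist argument on $S_\alpha$ and a check that the relevant complementary subsurface is not an annulus, which is where I would spend the most care.
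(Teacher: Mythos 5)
Your overall plan matches the paper's (translate $d_{G_L}$ conditions to intersection-number conditions via \prettyref{Lemma:distance-basic}, use finite sublevel sets to force edge orientations, and split into the two implications using the separating property of $\alpha$), but both directions contain concrete errors in execution.

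In the ($\Rightarrow$) direction, your mechanism for producing infinitely many distinct $\gamma_k$ does not work. You propose to ``apply high powers of a Dehn twist $T_\alpha$ or of a twist crossing $\alpha$'' to a seed curve $\delta_0\subset S_\alpha$. But $T_\alpha$ fixes every curve disjoint from $\alpha$ (as recalled in the paper's Dehn-twist subsection, $i(\alpha,\beta)=0$ implies $T_\alpha(\beta)=\beta$), so $T_\alpha^k\delta_0=\delta_0$ for all $k$ and you get no new curves. And twisting along a curve $\mu$ with $i(\mu,\alpha)>0$ destroys disjointness with $\alpha$: the curves $T_\mu^k\delta_0$ will typically intersect $\alpha$ for large $k$, so $d_{G_L}(\alpha,T_\mu^k\delta_0)\neq 1$. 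The correct choice (and the one the paper makes) is to twist along $\beta_1$: since $i(\alpha,\beta_1)=0$, \prettyref{Lemma:Dehn-twist-rotate}-type reasoning shows $i(\alpha,T_{\beta_1}^k\gamma_0)=0$ for all $k$; since $i(\beta_1,\gamma_0)>0$, the twists give infinitely many distinct curves with $i(\beta_1,T_{\beta_1}^k\gamma_0)=i(\beta_1,\gamma_0)>0$; and $i(\beta_2,T_{\beta_1}^k\gamma_0)>0$ holds for large $k$ (using $i(\beta_2,\gamma_0)>0$). Then \prettyref{Lemma:tends-to-infty} finishes.

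In the ($\Leftarrow$) direction, the key step is wrong as stated. From $i(\gamma,\beta_2)=0$ and $\gamma\neq\beta_2$ you conclude ``one cannot have both $d_{G_L}(\beta_1,\gamma)\neq 1$ and $d_{G_L}(\beta_2,\gamma)\neq 1$,'' but this is false: if $L(\gamma)<L(\beta_2)$ the directed edge points from $\gamma$ to $\beta_2$, so $d_{G_L}(\beta_2,\gamma)$ can still be $\neq 1$; and if $L(\gamma)=L(\beta_2)$ there is no edge in $G_L$ at all. You mention the right trick (push $L(\gamma)$ high) in your preamble but never use it here. The correct argument: given a putative infinite sequence $\{\gamma_k\}$, distinctness and \prettyref{Lemma:tends-to-infty} give $L(\gamma_k)\to\infty$, so pick $N$ with $L(\gamma_N)>\max\{L(\beta_1),L(\beta_2)\}$. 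Then $d_{G_L}(\beta_i,\gamma_N)\neq 1$ together with $L(\beta_i)<L(\gamma_N)$ forces $i(\beta_i,\gamma_N)>0$ by parts (3.2)--(3.3) of \prettyref{Lemma:distance-basic}. But $d_{G_L}(\alpha,\gamma_N)=1$ gives $i(\alpha,\gamma_N)=0$, which contradicts that any curve meeting both $\beta_1$ and $\beta_2$ (lying in different components of $S-\alpha$) must meet $\alpha$. So the conclusion is correct, but your proof of it has a genuine gap.
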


\begin{proof}
There are two implications to be proved.

Let us first assume that $\alpha$ is non-separating. Suppose that
$\beta_{1},\beta_{2}\in V(S)$ are distinct vertices with
\[
d_{G_{L}}(\alpha,\beta_{1})=d_{G_{L}}(\alpha,\beta_{2})=1.
\]
Then property (3.1) of \prettyref{Lemma:distance-basic} tells us
that
\[
i(\alpha,\beta_{1})=i(\alpha,\beta_{2})=0.
\]
Since $\alpha$ is non-separating, there exists a simple closed curve
$\gamma_{0}$ such that
\[
i(\alpha,\gamma_{0})=0,\quad i(\beta_{1},\gamma_{0})>0,\quad i(\beta_{2},\gamma_{0})>0.
\]

Let $\gamma_{k}\coloneqq T_{\beta_{1}}^{k}(\gamma_{0})$. Then $\{\gamma_{k}\}$
is an infinite sequence of distinct vertices in $V(S)$.

Since $i(\alpha,\beta_{1})=i(\alpha,\gamma_{0})=0$, we have $i(\alpha,\gamma_{k})=0$
for any $k\in\mathbb{N}$. Since $i(\beta_{1},\gamma_{0})>0$, we
have $i(\beta_{1},\gamma_{k})>0$ for any $k\in\mathbb{N}$. Since
$i(\beta_{1},\gamma_{0})>0$ and $i(\beta_{2},\gamma_{0})>0$, we
have $i(\beta_{2},\gamma_{k})>0$ for sufficiently large $k$. Without
loss of generality, we may assume that $i(\beta_{2},\gamma_{k})>0$
for any $k\in\mathbb{N}$. To summarize, for any $k\in\mathbb{N}$
we have
\[
i(\alpha,\gamma_{k})=0,\quad i(\beta_{1},\gamma_{k})>0,\quad i(\beta_{2},\gamma_{k})>0.
\]

On the one hand, since $L$ has finite sublevel sets, we have
\[
\lim_{k\to+\infty}L(\gamma_{k})=+\infty.
\]
Hence there exists an integer $N_{\gamma}$ such that $L(\gamma_{k})>L(\alpha)$
for all $k\geqslant N_{\gamma}$. Without loss of generality, we may
assume that $L(\gamma_{k})>L(\alpha)$ for any $k\in\mathbb{N}$.
Combining this with $i(\alpha,\gamma_{k})=0$ we obtain $\alpha<\gamma_{k}$,
and consequently $d_{G_{L}}(\alpha,\gamma_{k})=1$ for any $k\in\mathbb{N}$.

On the other hand, for any $k\in\mathbb{N}$, since $i(\beta_{1},\gamma_{k})>0$
and $i(\beta_{2},\gamma_{k})>0$, property (3.1) of \prettyref{Lemma:distance-basic}
tells us that
\[
d_{G_{L}}(\beta_{1},\gamma_{k})\neq1,\quad d_{G_{L}}(\beta_{2},\gamma_{k})\neq1.
\]

It follows that $\{\gamma_{k}\}$ is the desired sequence, and the
``only if'' implication is proved.

To prove the ``if'' implication, we proceed by contradiction. So suppose
that $\alpha$ is a separating curve.

If we cut $S$ along $\alpha$, we obtain two connected components,
each of which contains infinitely many simple closed curves disjoint
from $\alpha$. Since $L$ has finite sublevel sets, it follows that
there exist $\beta_{1},\beta_{2}\in V(S)$ lying in different components
of $S-\alpha$ such that
\[
\alpha<\beta_{1},\quad\alpha<\beta_{2},\quad i(\beta_{1},\beta_{2})=0,
\]
and hence
\[
d_{G_{L}}(\alpha,\beta_{1})=d_{G_{L}}(\alpha,\beta_{2})=1.
\]
By assumption, there exists an infinite sequence $\{\gamma_{k}\}$
of distinct vertices in $V(S)$ such that for any $k\in\mathbb{N}$
we have 
\[
d_{G_{L}}(\alpha,\gamma_{k})=1,\quad d_{G_{L}}(\beta_{1},\gamma_{k})\neq1,\quad d_{G_{L}}(\beta_{2},\gamma_{k})\neq1.
\]

Since $L$ has finite sublevel sets, we also have
\[
\lim_{n\to\infty}L(\gamma_{n})=+\infty.
\]
Therefore, there exists some $N$ such that
\[
L(\gamma_{N})>\max\{L(\beta_{1}),L(\beta_{2})\}.
\]

On the one hand, since $L(\beta_{1})<L(\gamma_{N})$ and $d_{G_{L}}(\beta_{1},\gamma_{N})\neq1$,
we conclude from properties (3.2) and (3.3) of \prettyref{Lemma:distance-basic}
that $i(\beta_{1},\gamma_{N})>0$. Similarly we have $i(\beta_{2},\gamma_{N})>0$.
On the other hand, $d_{G_{L}}(\alpha,\gamma_{N})=1$ implies that
$\alpha$ and $\gamma_{N}$ are disjoint.

However, $\beta_{1}$ and $\beta_{2}$ lie in different components
of $S-\alpha$, so any curve intersecting both $\beta_{1}$ and $\beta_{2}$
must intersect $\alpha$. This contradicts the fact that $\gamma_{N}$
intersects both $\beta_{1}$ and $\beta_{2}$ while being disjoint
from $\alpha$, and the proof is complete.
\end{proof}

Since the distance $d_{G_{L}}$ is invariant under the action of $\mathrm{Aut}(G_{L})$,
it follows that $\mathrm{Aut}(G_{L})$ preserves the set of non-separating
curves and the set of separating curves:
\begin{corollary}\label{Corollary:cor_pres_separating}
Suppose that $L$ has finite sublevel sets. Let $\varphi\in\mathrm{Aut}(G_{L})$
and $\alpha\in V(S)$. Then $\alpha$ is separating (respectively,
non-separating) if and only if $\varphi(\alpha)$ is separating (respectively,
non-separating).
\end{corollary}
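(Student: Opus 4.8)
The plan is to observe that \prettyref{Lemma:nonseparating_characterization} characterizes the non-separating vertices of $C(S)$ by a property that is phrased entirely in terms of the distance function $d_{G_L}$ and the notion of an infinite set of distinct vertices, both of which are preserved by any element of $\mathrm{Aut}(G_L)$. Thus the corollary will follow by transporting this characterization along $\varphi$.

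Write $P(\alpha)$ for the property appearing in \prettyref{Lemma:nonseparating_characterization}: for all distinct $\beta_1,\beta_2\in V(S)$ with $d_{G_L}(\alpha,\beta_1)=d_{G_L}(\alpha,\beta_2)=1$, there is an infinite sequence $\{\gamma_k\}$ of distinct vertices with $d_{G_L}(\alpha,\gamma_k)=1$ and $d_{G_L}(\beta_1,\gamma_k)\neq 1$, $d_{G_L}(\beta_2,\gamma_k)\neq 1$ for every $k$. First I would record that, by part (1) of \prettyref{Lemma:distance-basic}, $d_{G_L}(\varphi\cdot\mu,\varphi\cdot\nu)=d_{G_L}(\mu,\nu)$ for all $\mu,\nu\in V(S)$, that $\varphi$ is a bijection of $V(S)$ and hence sends infinite sets of distinct vertices to infinite sets of distinct vertices, and that the same holds for $\varphi^{-1}\in\mathrm{Aut}(G_L)$. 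Then I would check $P(\alpha)\Rightarrow P(\varphi\cdot\alpha)$: given admissible $\beta_1',\beta_2'$ for $\varphi\cdot\alpha$, put $\beta_i:=\varphi^{-1}\cdot\beta_i'$, so that $d_{G_L}(\alpha,\beta_i)=d_{G_L}(\varphi\cdot\alpha,\beta_i')=1$; applying $P(\alpha)$ produces a sequence $\{\gamma_k\}$, and $\{\varphi\cdot\gamma_k\}$ is then an infinite sequence of distinct vertices witnessing the required conditions for $\varphi\cdot\alpha$. The implication $P(\varphi\cdot\alpha)\Rightarrow P(\alpha)$ is the same argument with $\varphi$ replaced by $\varphi^{-1}$. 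Hence $P(\alpha)\Leftrightarrow P(\varphi\cdot\alpha)$.

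By \prettyref{Lemma:nonseparating_characterization}, $P$ holds precisely at the non-separating vertices, so $\alpha$ is non-separating if and only if $\varphi\cdot\alpha$ is non-separating. Finally, since $S$ is closed, every vertex of $C(S)$ is either separating or non-separating (and not both), and $\varphi$ is a bijection of $V(S)$; therefore $\alpha$ is separating if and only if $\varphi\cdot\alpha$ is separating as well. I do not anticipate a real obstacle here: the only point requiring care is the explicit observation that the characterizing property in \prettyref{Lemma:nonseparating_characterization} uses only $d_{G_L}$ and cardinality, so that it is genuinely $\mathrm{Aut}(G_L)$-invariant; once this is made precise, the argument is a routine transport of structure.
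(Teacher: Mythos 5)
Your proof is correct and follows exactly the route the paper takes: Lemma~\ref{Lemma:nonseparating_characterization} characterizes non-separating curves purely in terms of $d_{G_L}$ and cardinality, and by Lemma~\ref{Lemma:distance-basic}(1) this data is preserved by any $\varphi\in\mathrm{Aut}(G_L)$, so the characterization transports along $\varphi$. The paper simply states this transport-of-structure argument in one sentence, whereas you spell it out in full detail; there is no substantive difference.
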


Finally, we are ready to prove our second main theorem.
\begin{proof}[Proof of \prettyref{Theorem:main2}]
Let $\varphi:V(S)\rightarrow V(S)$ be an automorphism of $G_{L}$.
According to \prettyref{Theorem:Ivanov}, it suffices to show that
$\varphi\in\mathrm{Aut}(C(S))$, i.e., for any $\alpha,\beta\in V(S)$
we have
\[
i(\alpha,\beta)=0\Leftrightarrow i(\varphi\cdot\alpha,\varphi\cdot\beta)=0.
\]
Since $\varphi^{-1}$ is also an automorphism of $G_{L}$, it suffices
to show that for any $\alpha,\beta\in V(S)$ with $i(\alpha,\beta)=0$
we have $i(\varphi\cdot\alpha,\varphi\cdot\beta)=0$.

Let $\alpha,\beta\in V(S)$ such that $i(\alpha,\beta)=0$.

If $\alpha<\beta$, then $\varphi\cdot\alpha<\varphi\cdot\beta$,
and hence $i(\varphi\cdot\alpha,\varphi\cdot\beta)=0$. Similarly,
if $\beta<\alpha$, then $i(\varphi\cdot\alpha,\varphi\cdot\beta)=0$.

Therefore, it remains to consider the case when there is no edge between
$\alpha$ and $\beta$ in $G_{L}$. Now $L(\alpha)=L(\beta)$ must hold
since $i(\alpha,\beta)=0$.

Applying \prettyref{Corollary:pants-decomposition} to $\alpha,\beta$
we obtain a system of decomposing curves $\alpha,\beta,\alpha_{2},\cdots,\alpha_{3g-4}$
such that both induced subgraphs $G_{L}[\alpha,\alpha_{2},\cdots,\alpha_{3g-4}]$ and
$G_{L}[\beta,\alpha_{2},\cdots,\alpha_{3g-4}]$ are tournaments, and
\[
\xymatrix{ & \beta\ar[d]\\
\alpha\ar[r] & \alpha_{2}\ar[r] & \cdots\ar[r] & \alpha_{3g-4}.
}
\]

Write
\[
\alpha'=\varphi\cdot\alpha,\quad\beta'=\varphi\cdot\beta,\quad\alpha_{j}'=\varphi\cdot\alpha_{j}.
\]
Then the induced subgraphs $G_{L}[\alpha',\alpha_{2}',\cdots,\alpha_{3g-4}']$ and $G_{L}[\beta',\alpha_{2}',\cdots,\alpha_{3g-4}']$
are tournaments such that
\[
\xymatrix{ & \beta'\ar[d]\\
\alpha'\ar[r] & \alpha_{2}'\ar[r] & \cdots\ar[r] & \alpha_{3g-4}'.
}
\]

\begin{figure}
\begin{centering}
\includegraphics{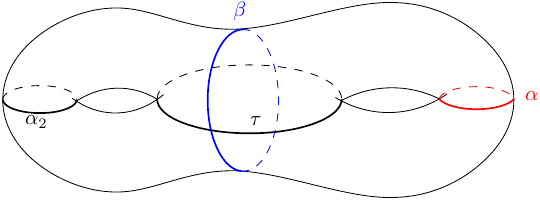}
\par\end{centering}
\caption{\label{fig:za_0q_automorphism-3}The case where $\beta$ is separating.}
\end{figure}

\begin{figure}
\begin{centering}
\includegraphics{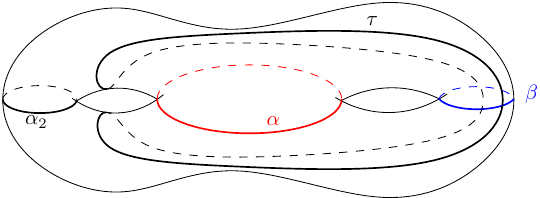}
\par\end{centering}
\caption{\label{fig:za_0q_automorphism-4}The case where $\beta$ is non-separating.}
\end{figure}

As illustrated in \prettyref{fig:za_0q_automorphism-3} and \prettyref{fig:za_0q_automorphism-4},
since $\alpha,\beta,\alpha_{2},\cdots,\alpha_{3g-4}$ form a system
of decomposing curves, there exists $\tau\in V(S)$ disjoint from
$\alpha,\alpha_{2},\cdots,\alpha_{3g-4}$ such that $i(\beta,\tau)>0$.

Let us set $\tau_{k}\coloneqq T_{\beta}^{k}\tau$ and $\tau_{k}'\coloneqq\varphi\cdot\tau_{k}$
for each $k\in\mathbb{N}_{+}$. Then $i(\beta,\tau)>0$ gives $i(\beta,\tau_{k})>0$
for any $k$.

\prettyref{Lemma:tends-to-infty} tells us that
\[
\lim_{k\rightarrow+\infty}L(\tau_{k})=+\infty,
\]
so there exists $N$ such that for any $k\geqslant N$ we have
\[
L(\tau_{k})>\max\{L(\alpha),L(\beta),L(\alpha_{2}),\cdots,L(\alpha_{3g-4})\}.
\]
Replacing $\tau$ with $\tau_{N}$ if necessary, we may assume that
$L(\tau_{k})>\max\{L(\alpha),L(\beta),L(\alpha_{2}),\cdots,L(\alpha_{3g-4})\}$
for any $k$.
It follows that $G_{L}[\alpha,\alpha_{2},\cdots,\alpha_{3g-4},\tau_{k}]$
is a tournament, and hence that $G_{L}[\alpha',\alpha_{2}',\cdots,\alpha_{3g-4}',\tau_{k}']$
is a tournament as well.

Furthermore, since $L(\tau_{k})>\max\{L(\alpha),L(\alpha_{2}),\cdots,L(\alpha_{3g-4})\}$, we have
\[
\xymatrix{ & \beta\ar[d]\\
\alpha\ar[r] & \alpha_{2}\ar[r] & \cdots\ar[r] & \alpha_{3g-4}\ar[r] & \tau_{k},
}
\]
which gives
\[
\xymatrix{ & \beta'\ar[d]\\
\alpha'\ar[r] & \alpha_{2}'\ar[r] & \cdots\ar[r] & \alpha_{3g-4}'\ar[r] & \tau_{k}',
}
\]
and hence $L(\beta')<L(\tau_{k}')$ for any $k$.

We claim that $i(\beta',\tau_{k}')>0$. Otherwise, we would have $\beta'<\tau_{k}'$,
which implies $\beta<\tau_{k}$ and contradicts the fact that $i(\beta,\tau_{k})>0$.

\textbf{For the rest of the argument, we proceed by contradiction and assume
that $i(\alpha',\beta')>0$.}

Recall that we say that the type of a surface is $(n_{1},n_{2})$ if it
is a surface of genus $n_{1}$ with $n_{2}$ boundary components.

Cutting $S$ along $\alpha_{2}',\cdots,\alpha_{3g-4}'$ we obtain
some bordered subsurfaces, among which there is a unique subsurface
$S_{1}$ that is not a surface of type $(0,3)$. Indeed, $S_{1}$
must be a surface of type $(0,5)$ or $(1,2)$.
Since both $G_{L}[\alpha',\alpha_{2}',\cdots,\alpha_{3g-4}',\tau_{k}']$
and $G_{L}[\beta',\alpha_{2}',\cdots,\alpha_{3g-4}']$ are
tournaments, we conclude that $\alpha',\beta',\tau_{k}'$ must lie
in the interior of $S_{1}$.

The proof splits into two cases, depending on whether $\alpha'$ or
$\beta'$ separates $S_{1}$ into two subsurfaces.

\textbf{First case: $\alpha'$ or $\beta'$ separates $S_{1}$ into
two subsurfaces.}

Without loss of generality, we may assume that $\alpha'$ separates
$S_{1}$ into two subsurfaces, exactly one of which has type $(0,3)$.
We denote by $S_{1}''$ the subsurface of type $(0,3)$, and denote
by $S_{1}'$ the other one.

Since $G_{L}[\alpha',\alpha_{2}',\cdots,\alpha_{3g-4}',\tau_{k}']$
is a tournament, it follows that $\tau_{k}'$ must lie in $S_{1}'$.
 
\begin{figure}
\begin{centering}
\includegraphics{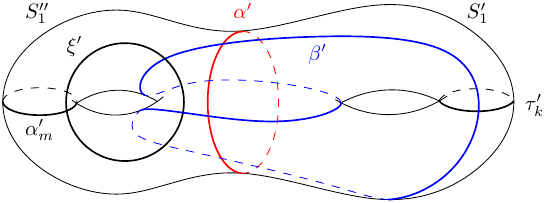}
\par\end{centering}
\caption{\label{fig:za_0q_automorphism-7}The case where $S_{1}$ has type
$(1,2)$ and $\alpha'$ separates $S_{1}$ into two subsurfaces.}
\end{figure}

As illustrated in \prettyref{fig:za_0q_automorphism-7}, there exist
some $\alpha_{m}'$ and $\xi'$ such that for any $j\in\mathbb{N}_{+}$
the subgraph induced in $C(S)$ by $\{\alpha',\beta',\alpha_{m}',\tau_{j}',\xi'\}$
is
\[
\xymatrix{ & \alpha'\ar@{-}[r]\ar@{-}[d]\ar@{-}[dr] & \xi'\\
\beta'\ar@{-}[r] & \alpha_{m}'\ar@{-}[r] & \tau_{j}'\ar@{-}[u]
}
.
\]

We set $\xi_{k}'\coloneqq T_{\alpha_{m}'}^{k}\xi'$ and $\xi_{k}\coloneqq\varphi^{-1}\cdot\xi_{k}'$
for each $k\in\mathbb{N}_{+}$. Since $L$ has finite sublevel sets,
\prettyref{Lemma:tends-to-infty} tells us that 
\[
\lim_{k\rightarrow+\infty}L(\xi_{k})=+\infty,
\]
hence there exists $N_{\xi}$ such that $L(\beta)<L(\xi_{k})$
for any $k\geqslant N_{\xi}$.

Fix $k_{0}\geqslant N_{\xi}$. Then \prettyref{Lemma:tends-to-infty}
tells us that 
\[
\lim_{j\rightarrow+\infty}L(\tau_{j}')=+\infty,
\]
hence there exists $N_{k_{0}}$ such that $L(\xi_{k_{0}}')<L(\tau_{j}')$
for any $j\geqslant N_{k_{0}}$. Observe that $i(\xi_{k_{0}}',\tau_{j}')\equiv0$,
and therefore for any $j\geqslant N_{k_{0}}$ we have $\xi_{k_{0}}'<\tau_{j}'$,
which gives $\xi_{k_{0}}<\tau_{j}$, and hence $i(\xi_{k_{0}},\tau_{j})=0$.
Combining this with
\[
\lim_{j\rightarrow+\infty}\frac{\tau_{j}}{j\cdot i(\tau,\beta)}=\beta\quad\text{in}\quad\mathcal{MF},
\]
we obtain 
\[
i(\xi_{k_{0}},\beta)=\lim_{j\rightarrow+\infty}i\left(\xi_{k_{0}},\frac{\tau_{j}}{j\cdot i(\tau,\beta)}\right)=\lim_{j\rightarrow+\infty}\frac{i(\xi_{k_{0}},\tau_{j})}{j\cdot i(\tau,\beta)}=0.
\]
It follows from $L(\beta)<L(\xi_{k_{0}})$ that $\beta<\xi_{k_{0}}$,
which gives $\beta'<\xi_{k_{0}}'$, and hence $i(\beta',\xi_{k_{0}}')=0$.

However, we have $i(\alpha_{m}',\beta')=0$ and $i(\xi',\beta')>0$.
It follows that 
\[
i(\beta',\xi_{k_{0}}')=i\left(T_{\alpha_{m}'}^{k_{0}}\beta',T_{\alpha_{m}'}^{k_{0}}\xi'\right)=i(\beta',\xi')>0,
\]
a contradiction!

\textbf{Second case: Neither $\alpha^{\prime}$ nor $\beta^{\prime}$
separates $S_{1}$ into two subsurfaces.}

In this case, neither $\alpha'$ nor $\beta'$ is a separating curve
in $S$, so it follows from \prettyref{Corollary:cor_pres_separating}
that neither $\alpha$ nor $\beta$ is a separating curve in $S$.

\begin{figure}[h]
\begin{centering}
\includegraphics{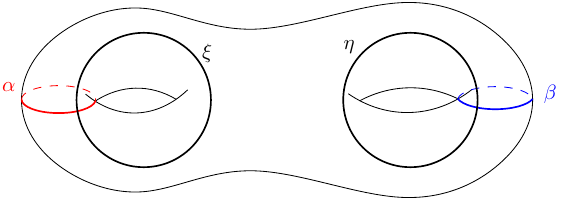}
\par\end{centering}
\caption{\label{fig:za_0q_ns-ns}}
\end{figure}

As illustrated in \prettyref{fig:za_0q_ns-ns}, we can choose distinct
curves $\xi,\eta\in V(S)$ such that the subgraph induced in $C(S)$
by $\{\alpha,\beta,\xi,\eta\}$ is
\begin{equation}
\xymatrix{\alpha\ar@{-}[r]\ar@{-}[d] & \beta\ar@{-}[d]\\
\eta\ar@{-}[r] & \xi
}
.\label{eq:main2-1}
\end{equation}

For each $k\in\mathbb{N}_{+}$ we set
\begin{align*}
\xi_{k} & \coloneqq T_{\alpha}^{k}\xi, & \eta_{k} & \coloneqq T_{\beta}^{k}\eta,\\
\xi_{k}' & \coloneqq\varphi\cdot\xi_{k}, & \eta_{k}' & \coloneqq\varphi\cdot\eta_{k}.
\end{align*}
It follows from \prettyref{eq:main2-1} that the subgraph induced
in $C(S)$ by $\{\alpha,\beta,\xi_{j},\eta_{k}\}$ is
\begin{equation}
\xymatrix{\alpha\ar@{-}[r]\ar@{-}[d] & \beta\ar@{-}[d]\\
\eta_{k}\ar@{-}[r] & \xi_{j}
}
.\label{eq:main2-2}
\end{equation}
\prettyref{Lemma:tends-to-infty} tells us that
\begin{equation}
\lim_{k\rightarrow+\infty}L(\xi_{k})=\lim_{k\rightarrow+\infty}L(\xi_{k}')=\lim_{k\rightarrow+\infty}L(\eta_{k})=\lim_{k\rightarrow+\infty}L(\eta_{k}')=+\infty.\label{eq:main2-3}
\end{equation}
Thus there exists $N_{0}\in\mathbb{N}_{+}$ such that for any $k\geqslant N_{0}$
we have $L(\beta)<L(\xi_{k})$ and $L(\alpha')<L(\xi_{k}')$.

Let us set $\sigma=\xi_{N_{0}}$ and $\sigma'=\varphi\cdot\sigma$.
Then $L(\beta)<L(\sigma)$ and $L(\alpha')<L(\sigma')$. Moreover,
it follows from \prettyref{eq:main2-2} that the subgraph induced
in $C(S)$ by $\{\alpha,\beta,\sigma,\eta_{k}\}$ is
\begin{equation}
\xymatrix{\alpha\ar@{-}[r]\ar@{-}[d] & \beta\ar@{-}[d]\\
\eta_{k}\ar@{-}[r] & \sigma
}
.\label{eq:main2-4}
\end{equation}
Combining this with $L(\beta)<L(\sigma)$ we get $\beta<\sigma$,
which yields $\beta'<\sigma'$, and hence $i(\beta',\sigma')=0$.
Furthermore, from $(\alpha,\sigma)\notin G_{L}$ we obtain $(\alpha',\sigma')\notin G_{L}$,
hence $L(\alpha')<L(\sigma')$ implies $i(\alpha',\sigma')\neq0$.
To summarize, the subgraph induced in $C(S)$ by $\{\alpha',\beta',\sigma'\}$
is
\begin{equation}
\xymatrix{\alpha' & \beta'\ar@{-}[d]\\
 & \sigma'
}
.\label{eq:main2-7}
\end{equation}
\prettyref{Lemma:tends-to-infty} tells us that
\[
\lim_{k\rightarrow+\infty}L(\varphi^{-1}\cdot T_{\alpha'}^{k}\sigma')=+\infty.
\]
Combining this with $i(\alpha',\beta')>0$ we conclude that there
exists $k_{0}\in\mathbb{N}_{+}$ such that $L(\beta)<L(\varphi^{-1}\cdot T_{\alpha'}^{k_{0}}\sigma')$
and $i(T_{\alpha'}^{k_{0}}\sigma',\beta')>0$. Let us set $\gamma'=T_{\alpha'}^{k_{0}}\sigma'$
and $\gamma=\varphi^{-1}\cdot\gamma'$. Then we have
\begin{equation}
L(\beta)<L(\gamma)\quad\text{and}\quad i(\gamma',\beta')>0.\label{eq:main2-11}
\end{equation}
It follows from \prettyref{eq:main2-3} that there exists $N_{1}\in\mathbb{N}_{+}$
such that for any $k\geqslant N_{1}$ we have 
\begin{equation}
L(\eta_{k})>\max\{L(\alpha),L(\sigma),L(\gamma)\}\quad\text{and}\quad L(\eta_{k}')>L(\gamma').\label{eq:main2-5}
\end{equation}
Replacing $\eta$ with $\eta_{N_{1}}$ if necessary, we may assume that
\prettyref{eq:main2-5} holds for every $k\in\mathbb{N}_{+}$. Combining
\prettyref{eq:main2-4} and \prettyref{eq:main2-5} we see that the
subgraph induced in $G_{L}$ by $\{\alpha,\beta,\sigma,\eta_{k}\}$
is
\[
\xymatrix{\alpha\ar[d] & \beta\ar[d]\\
\eta_{k} & \sigma\ar[l]
}
,
\]
and hence the subgraph induced in $G_{L}$ by $\{\alpha',\beta',\sigma',\eta_{k}'\}$
is
\begin{equation}
\xymatrix{\alpha'\ar[d] & \beta'\ar[d]\\
\eta_{k}' & \sigma'\ar[l]
}
.\label{eq:main2-8}
\end{equation}
Since $\gamma'=T_{\alpha'}^{k_{0}}\sigma'$, combining \prettyref{eq:main2-8}
with \prettyref{eq:main2-5} we conclude that the subgraph induced
in $G_{L}$ by $\{\alpha',\beta',\eta_{k}',\gamma'\}$ is
\[
\xymatrix{\alpha'\ar[d] & \beta'\\
\eta_{k}' & \gamma'\ar[l]
}
,
\]
and hence the subgraph induced in $G_{L}$ by $\{\alpha,\beta,\eta_{k},\gamma\}$
is
\begin{equation}
\xymatrix{\alpha\ar[d] & \beta\\
\eta_{k} & \gamma\ar[l]
}
,\label{eq:main2-10}
\end{equation}
which gives $i(\eta_{k},\gamma)\equiv0$.

However, combining $i(\eta_{k},\gamma)\equiv0$ with 
\[
\lim_{k\rightarrow+\infty}\frac{\eta_{k}}{k\cdot i(\eta,\beta)}=\beta\quad\text{in}\quad\mathcal{MF},
\]
we obtain
\[
i(\beta,\gamma)=\lim_{k\rightarrow+\infty}i\left(\frac{\eta_{k}}{k\cdot i(\eta,\beta)},\gamma\right)=\lim_{k\rightarrow+\infty}\frac{i(\eta_{k},\gamma)}{k\cdot i(\eta,\beta)}=0,
\]
and hence \prettyref{eq:main2-11} implies $\beta<\gamma$, contrary
to \prettyref{eq:main2-10}.

In conclusion, assuming $i(\alpha',\beta')=0$ necessarily leads to
a contradiction, and therefore the proof is complete.
\end{proof}

\subsection{Periodicity}

We say that $A:E^{c}\rightarrow\mathbb{R}_{+}$ is $\mathrm{Mod}^{\pm}(S)$-\emph{invariant}
if for any $(\alpha,\beta)\in E^{c}$ and $h\in\mathrm{Mod}^{\pm}(S)$
we have $A(h(\alpha),h(\beta))=A(\alpha,\beta)$.
\begin{Lemma}\label{Lemma:periodic}
Let $L:V(S)\rightarrow\mathbb{R}$ be a Dehn quasi-homothetic function
of type $(f,A)$, where $A$ is $\mathrm{Mod}^{\pm}(S)$-invariant. Suppose
that $L$ is positive and has finite sublevel sets. Then for any $\varphi\in\mathrm{Aut}(G_{L})$
and $\alpha\in V(S)$ we have:

\emph{(i)} $L(\alpha)=L(\varphi\cdot\alpha)$,

\emph{(ii)} $\#\{\varphi^{k}\cdot\alpha:k\in\mathbb{Z}\}<\infty$,

\emph{(iii)} $\varphi$ is periodic.
\end{Lemma}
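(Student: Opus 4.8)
The plan is to reduce everything to one identity, $L\circ\varphi=k\cdot L$ for some constant $k>0$, and then read off (i)--(iii) from it.

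\emph{Step 1: $L\circ\varphi$ is comparable to $L$.} Since $L$ has finite sublevel sets, \prettyref{Theorem:main2} applies, so $\varphi$ is induced by a self-homeomorphism $f$ of $S$; in particular $\varphi\in\mathrm{Aut}(C(S))$, i.e. $i(\alpha,\beta)=0\Leftrightarrow i(\varphi\alpha,\varphi\beta)=0$. Combining this with $\varphi\in\mathrm{Aut}(G_L)$, one checks that $L\circ\varphi$ induces the same oriented graph as $L$: $(\alpha,\beta)\in G_{L\circ\varphi}$ iff $i(\varphi\alpha,\varphi\beta)=0$ and $L(\varphi\alpha)<L(\varphi\beta)$, i.e. iff $(\varphi\alpha,\varphi\beta)\in G_L$, i.e. iff $(\alpha,\beta)\in G_L$. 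Next I would verify that $L\circ\varphi$ is again a Dehn quasi-homothetic function of the same type $(f,A)$: writing $fT_\alpha f^{-1}=T_{f(\alpha)}^{\varepsilon}$ with $\varepsilon=\pm1$, and using that $A$ is $\mathrm{Mod}^{\pm}(S)$-invariant so $A(f\alpha,f\beta)=A(\alpha,\beta)$, one obtains $(L\circ\varphi)(T_\alpha^{n}\beta)/f(n)\to A(\alpha,\beta)\cdot(L\circ\varphi)(\alpha)$. With these two facts, \prettyref{Theorem:main1}, applied to $L_1=L\circ\varphi$ and $L_2=L$ (positive), produces $k\geqslant0$ with $L\circ\varphi=k\cdot L$, and $k>0$ since $L\circ\varphi>0$.

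\emph{Step 2: $k=1$, hence (i).} Iterating gives $L\circ\varphi^{n}=k^{n}\cdot L$ for all $n\in\mathbb{Z}$, so $L(\varphi^{n}\alpha)=k^{n}L(\alpha)$ for every $\alpha\in V(S)$. If $k\neq1$, then one of the one-sided orbits $\{\varphi^{n}\alpha:n\geqslant0\}$ (if $k<1$) or $\{\varphi^{n}\alpha:n\leqslant0\}$ (if $k>1$) lies in $L^{-1}((-\infty,L(\alpha)])$, hence is finite; by pigeonhole it contains a $\varphi$-periodic vertex $\beta$, say $\varphi^{p}\beta=\beta$ with $p\geqslant1$, and then $L(\beta)=L(\varphi^{p}\beta)=k^{p}L(\beta)$ together with $L(\beta)>0$ forces $k^{p}=1$, contradicting $k\neq1$. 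Hence $k=1$, i.e. $L\circ\varphi=L$; this is (i). Now (ii) is immediate: $\{\varphi^{n}\alpha:n\in\mathbb{Z}\}\subseteq L^{-1}((-\infty,L(\alpha)])$ is finite. For (iii), every orbit of $\varphi$ on $V(S)$ is finite, and $\varphi$ is realized by a mapping class; a pseudo-Anosov or an infinite-order reducible mapping class has an infinite orbit on some vertex of $C(S)$ (e.g. after passing to a power, $\varphi$ fixes every curve of a pants decomposition $P$ and is a multitwist along $P$, which by (ii) must be trivial since a curve crossing a twisting curve would have an infinite orbit), so $\varphi$ has finite order, i.e. $\varphi^{N}=\mathrm{Id}$ for some $N$.

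\emph{Main obstacle.} I expect the delicate point to be the claim in Step 1 that $L\circ\varphi$ is Dehn quasi-homothetic of the same type when $f$ reverses orientation: then $fT_\alpha^{n}f^{-1}=T_{f(\alpha)}^{-n}$, so one needs the asymptotics $L(T_\gamma^{-n}\delta)/f(n)\to A(\gamma,\delta)L(\gamma)$ for negative twist powers, equivalently that $L$ is Dehn quasi-homothetic of type $(f,A)$ with respect to the inverse system $\{T_\alpha^{-1}\}_\alpha$ of Dehn twists as well. This is exactly the place where the density condition (which forces $f(n-1)/f(n)\to1$) and the $\mathrm{Mod}^{\pm}(S)$-invariance of $A$ (which gives $A(\gamma,T_\gamma^{-m}\delta)=A(\gamma,\delta)$) must be combined. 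If one prefers to bypass it, note that (ii) and (iii) use only that \emph{some} fixed power of $\varphi$ has finite orbits: applying Step 1 to the orientation-preserving automorphism $\varphi^{2}$ gives $L\circ\varphi^{2}=k'\cdot L$, then $k'=1$ as in Step 2, so $L\circ\varphi^{2}=L$, and (ii), (iii) follow verbatim; only the orientation-reversing case of (i) genuinely requires the negative-power asymptotics.
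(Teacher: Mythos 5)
Your proof takes essentially the same route as the paper's: show that $L\circ\varphi$ is again Dehn quasi-homothetic of type $(f,A)$ using the $\mathrm{Mod}^{\pm}(S)$-invariance of $A$, observe that it induces the same oriented graph, invoke \prettyref{Theorem:main1} to obtain $L\circ\varphi=k\cdot L$, force $k=1$ from positivity and finite sublevel sets, and conclude (iii) from the Casson--Bleiler criterion. The orientation-reversal subtlety you flag in Step 1 is a genuine issue that the paper itself glosses over: the equality $L(h(T_\alpha^{n}\beta))=L(T_{h(\alpha)}^{n}h(\beta))$ used in the paper's computation requires $hT_\alpha h^{-1}=T_{h(\alpha)}$, which fails (it equals $T_{h(\alpha)}^{-1}$) when $h$ reverses orientation, so the verification that $L\circ\varphi$ has type $(f,A)$ as written is only valid for orientation-preserving $\varphi$.

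Be aware, though, that your suggested remedy via the density condition does not work: density of $\{f(k)/f(j):j,k>N\}$ does \emph{not} force $f(n-1)/f(n)\to1$. For example, $f(n)=n$ for even $n$ and $f(n)=n^{2}$ for odd $n$ satisfies conditions (1) and (2) of \prettyref{Definition:Dehn quasi-homothetic}, yet $f(n-1)/f(n)$ has no limit. Hence the negative-power asymptotics (and the identity $A(\gamma,T_{\gamma}^{-m}\delta)=A(\gamma,\delta)$) need a different justification---for the hyperbolic and extremal length functions they hold by the symmetry $n\mapsto -n$, but they do not follow from the abstract axioms alone. Your fallback of running the argument with $\varphi^{2}$ does correctly recover (ii) and (iii) in the orientation-reversing case.
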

\begin{proof}
Let us set $L_{1}=L$ and $L_{2}=L\circ\varphi$.

Since $L$ has finite sublevel sets, \prettyref{Theorem:main2} tells
us that $\varphi$ is induced by some $h\in\mathrm{Mod}^{\pm}(S)$.
Because $L$ has type $(f,A)$, for any $(\alpha,\beta)\in E^{c}(S)$
we have
\[
\lim_{n\rightarrow+\infty}\frac{L(T_{\alpha}^{n}\beta)}{f(n)}=A(\alpha,\beta)\cdot L(\alpha),
\]
and hence for $(h(\alpha),h(\beta))\in E^{c}(S)$ it holds that
\[
\lim_{n\rightarrow+\infty}\frac{L(T_{h(\alpha)}^{n}h(\beta))}{f(n)}=A(h(\alpha),h(\beta))\cdot L(h(\alpha)),
\]
which gives
\begin{align*}
\lim_{n\rightarrow+\infty}\frac{L_{2}(T_{\alpha}^{n}\beta)}{f(n)} & =\lim_{n\rightarrow+\infty}\frac{L(h(T_{\alpha}^{n}\beta))}{f(n)}\\
 & =\lim_{n\rightarrow+\infty}\frac{L(T_{h(\alpha)}^{n}h(\beta))}{f(n)}\\
 & =A(h(\alpha),h(\beta))\cdot L(h(\alpha))\\
 & =A(\alpha,\beta)\cdot L_{2}(\alpha).
\end{align*}
It follows that $L_{2}:V(S)\rightarrow\mathbb{R}$ is a Dehn quasi-homothetic
function of type $(f,A)$ as well.

Since $\varphi$ is an automorphism of $G_{L}$, for any $\alpha,\beta\in V(S)$
we have
\[
\alpha<\beta\Leftrightarrow\varphi\cdot\alpha<\varphi\cdot\beta,
\]
and hence $\alpha<\beta$ in $G_{L_{1}}$ if and only if $\alpha<\beta$
in $G_{L_{2}}$. Thus $L_{1}$ and $L_{2}$ induce the same oriented
graph.

As $L_{1}$ and $L_{2}$ have the same type and induce the same oriented
graph, we conclude from \prettyref{Theorem:main1} that there exists
$c>0$ such that $L_{1}=c\cdot L_{2}$, i.e., $L=c\cdot L\circ\varphi$.
Consequently, for any $\alpha\in V(S)$ we have
\[
\cdots=c^{-k}\cdot L(\varphi^{-k}\cdot\alpha)=\cdots=L(\alpha)=c\cdot L(\varphi\cdot\alpha)=\cdots=c^{k}\cdot L(\varphi^{k}\cdot\alpha)=\cdots,
\]
which implies $c=1$ since $L$ has finite sublevel sets, and (i)
follows.

Because $L$ has finite sublevel sets, it follows from
\[
\cdots=L(\varphi^{-k}\cdot\alpha)=\cdots=L(\alpha)=L(\varphi\cdot\alpha)=\cdots=L(\varphi^{k}\cdot\alpha)=\cdots
\]
that $\#\{\varphi^{k}\cdot\alpha:k\in\mathbb{Z}\}<\infty$, and (ii)
is proved. Furthermore, since $\alpha$ is arbitrary, we conclude
from Theorem 2.7 of \cite{casson1988automorphisms} that $\varphi$
is periodic, as claimed in (iii).
\end{proof}

\section{Applications to length functions}\label{sec:applications}

\subsection{Proof of \prettyref{Corollary:Cor3}}
Assume that $C_{X}(S)=C_{Y}(S)$.

The proof is divided into three cases.

\textbf{First case:} Both $C_{X}(S)$ and $C_{Y}(S)$ are induced
by the hyperbolic length functions.

Recall that we denote the hyperbolic length functions of $X$ and
$Y$ by $L_{X}$ and $L_{Y}$, respectively. In \prettyref{sub:examples}
we showed that the hyperbolic length functions are Dehn quasi-homothetic
functions of type
\[
(k\mapsto k,(\alpha,\beta)\mapsto i(\alpha,\beta)).
\]
Since $L_{X}$ and $L_{Y}$ have the same type and induce the same
oriented graph, we conclude from \prettyref{Theorem:main1} that there
exists $k>0$ such that $L_{X}=k\cdot L_{Y}$, which implies
\[
\ln\sup_{\alpha\in V(S)}\frac{L_{X}(\alpha)}{L_{Y}(\alpha)}=\ln k.
\]
Combining this with \prettyref{eq:Thurston-distance} we get $d_{\mathrm{Th}}(X,Y)=-\ln k$
and $d_{\mathrm{Th}}(Y,X)=\ln k$, and hence $k=1$ since $d_{\mathrm{Th}}\geqslant0$.
Thus $d_{\mathrm{Th}}(X,Y)=0$, which gives $X=Y$, as desired.

\textbf{Second case:} Both $C_{X}(S)$ and $C_{Y}(S)$ are induced
by the extremal length functions.

Recall that we denote the extremal length functions of $X$ and $Y$
by $\mathrm{Ext}_{X}$ and $\mathrm{Ext}_{Y}$, respectively. In \prettyref{sub:examples}
we showed that the extremal length functions are Dehn quasi-homothetic
functions of type
\[
(k\mapsto k,(\alpha,\beta)\mapsto i(\alpha,\beta)).
\]
Since $\mathrm{Ext}_{X}$ and $\mathrm{Ext}_{Y}$ have the same type
and induce the same oriented graph, we conclude from \prettyref{Theorem:main1}
that there exists $k>0$ such that $\mathrm{Ext}_{X}=k\cdot\mathrm{Ext}_{Y}$,
which implies
\[
\frac{1}{2}\ln\sup_{\alpha\in V(S)}\frac{\mathrm{Ext}_{X}(\alpha)}{\mathrm{Ext}_{Y}(\alpha)}=\frac{1}{2}\ln k.
\]
Hence it follows from \prettyref{Theorem:Teich-distance} that $\ln k=-\ln k$,
which yields that the Teichmüller distance between $X$ and $Y$ is
zero, and so $X=Y$, as desired.

\textbf{Third case:} Exactly one of $C_{X}(S)$ and $C_{Y}(S)$ is
induced by the hyperbolic length function.

Without loss of generality, we may assume that $C_{X}(S)$ is induced
by the hyperbolic length function $L_{X}$, and $C_{Y}(S)$ is induced
by the extremal length function $\mathrm{Ext}_{Y}$. Since $L_{X}$
and $\sqrt{\mathrm{Ext}_{Y}}$ have the same type and induce the same
oriented graph, we conclude from \prettyref{Theorem:main1} that there
exists $k>0$ such that $L_{X}=k\cdot\sqrt{\mathrm{Ext}_{Y}}$. Then
for any $\alpha\in V(S)$ we have
\[
\mathrm{Ext}_{Y}(\alpha)=\frac{L_{X}^{2}(\alpha)}{k^{2}},
\]
which contradicts \prettyref{Lemma:hyp-neq-ext}. Therefore, this
case is impossible, and the proof is complete.

\subsection{Proof of \prettyref{Corollary:Cor6}}
Let $\varphi$ be an automorphism of $C_{X}(S)$.

The proof is divided into two cases.

\textbf{First case:} $C_{X}(S)$ is induced by the hyperbolic length function
$L_{X}$.

In \prettyref{sub:examples} we showed that the hyperbolic length
functions are Dehn quasi-homothetic functions of type
\[
(k\mapsto k,(\alpha,\beta)\mapsto i(\alpha,\beta)).
\]
Observe that for any $\alpha,\beta\in V(S)$ and $h\in\mathrm{Mod}^{\pm}(S)$
we have $i(h(\alpha),h(\beta))=i(\alpha,\beta)$. It follows from
\prettyref{Lemma:periodic} that for any $\alpha\in V(S)$ we have
$L_{X}(\alpha)=L_{X}(\varphi\cdot\alpha)$.

On the other hand, \prettyref{Theorem:main2} tells us that $\varphi$
is induced by a self-homeomorphism $h$ of $S$. Furthermore, we may
assume that $h:S\rightarrow S$ is a diffeomorphism. Let
$Y=h^{*}X\in\mathcal{T}(S)$. Then for any $\alpha\in V(S)$, we have
\[
L_{Y}(\alpha)=L_{X}(h(\alpha))=L_{X}(\varphi\cdot\alpha)=L_{X}(\alpha).
\]
Hence it follows from the $9g-9$ theorem that $X=Y$. Finally, by
the definition of Teichmüller space, $h$ is isotopic to
an isometry $H$ of $X$, and hence $\varphi$ is induced by $H$.

\textbf{Second case:} $C_{X}(S)$ is induced by the extremal length
function $\mathrm{Ext}_{X}$.

Similarly to the first case, we can show that for any $\alpha\in V(S)$,
it holds that
\[
\mathrm{Ext}_{Y}(\alpha)=\mathrm{Ext}_{X}(\alpha).
\]
Hence it follows from \prettyref{Theorem:Teich-distance} that $X=Y$,
and so $\varphi$ is induced by an isometry of $X$, which completes the proof.

\section{Conclusion}

The most interesting Dehn quasi-homothetic functions on $C(S)$ are
those of type
\[
(k\mapsto k,(\alpha,\beta)\mapsto i(\alpha,\beta)),
\]
which may be called Dehn quasi-linear functions. Let 
$\mathcal{DQL}(S)$ donote the space of Dehn quasi-linear functions.
Then $\mathcal{DQL}(S)$ contains:

(1) length functions induced by metrics with non-positive curvature,
such as hyperbolic metrics and flat metrics;

(2) the square root of an extremal length function;

(3) the geometric intersection number with a given measured foliation or lamination.

Equipping $\mathcal{DQL}(S)$ with Thurston metric, we obtain a natural
isometric embedding of the Teichmüller space $\mathcal{T}(S)$ equipped with Thurston
metric.

Therefore, $\mathcal{DQL}(S)$ provides a unified framework to study
these geometric objects. Moreover, although $\mathcal{DQL}(S)$ looks
smaller than the space of geodesic currents, it contains the square roots of extremal
length functions, while the square root of any extremal length function cannot be
realized as the geometric intersection number with a geodesic current.

\prettyref{Theorem:main2} suggests that the oriented graph induced
by a function often retains a lot of information about $C(S)$. Therefore,
the automorphism groups of such oriented graphs may offer an alternative
perspective for studying subgroups of mapping class groups.

In another paper that we are preparing, we will show that the oriented
graph induced by a measured foliation or lamination, via the geometric
intersection number, has only automorphisms that is induced by self-homeomorphisms
of $S$. The style of our proof is similar to that of \prettyref{Lemma:nonseparating_characterization},
but far more complicated.

\printbibliography

\end{document}